\newtheorem{theorem}{Theorem}
\newtheorem{proposition}{Proposition}
\newtheorem{lemma}{Lemma}
\newtheorem{definition}{Definition}
\DeclareMathOperator{\im}{im}
\DeclareMathOperator{\divi}{div}
\DeclareMathOperator{\rank}{rk}
\DeclareMathOperator{\gon}{gon}
\DeclareMathOperator{\Cliff}{Cliff}
\newtheorem*{subject}{2000 Mathematics Subject Classification}
\newtheorem*{keywords}{Keywords}
\newtheorem*{TheoremA}{Theorem A}
\newtheorem*{TheoremB}{Theorem B}
\newtheorem*{TheoremC}{Theorem C}
\theoremstyle{remark}
\newtheorem*{remark}{Remark}
\newtheorem*{claim}{Claim}
\author{Marc Coppens\footnote{KU  Leuven, Department of Mathematics, Section of Algebra,
Celestijnenlaan 200B bus 2400 B-3001 Leuven and department Elektrotechniek (ESAT), Belgium; email: marc.coppens@kuleuven.be.}}
\title{A study of general Martens-special chains of cycles. }
\date{}
\begin{document}
\maketitle \noindent

\begin{abstract}
For a general Martens-special chain of cycles $\Gamma$ of type $k$ we prove that the gonality is equal to $k+2$.
Although $\dim (W^1_{k+2} (\Gamma))=k$ we prove that $w^1_{k+2}(\Gamma)=0$.
We also compute the gonality sequence of $\Gamma$ and we prove it is divisorial complete.
We prove that a general Martens-special discrete chain of cycles $G$ of type $k$ has the same gonality sequence.
\end{abstract}

\begin{subject}
05C25, 14T15
\end{subject}

\begin{keywords}
Brill-Noether numbers, divisors, gonality, gonality sequence, divisorial complete, metric graphs, finite graphs, tableaux
\end{keywords}

\section{Introduction}\label{section1}

In this paper, studying general Martens-special chains of cycles, we illustrate some differences between the theories of divisors on metric graphs and divisors on smooth curves.

\begin{itemize}
\item On a metric graph $\Gamma$ the difference between $\dim (W^r_d(\Gamma))$ and the Brill-Noether number $w^r_d(\Gamma)$ can become very large.
\item On a metric graph $\Gamma$ large Brill-Noether numbers $w^r_d(\Gamma)$ give very poor information on the gonality of $\Gamma$ if $d-2r \neq 0$.
\item For any integer $c \geq 0$ there exist metric graphs $\Gamma$ of arbitrary large genus with Clifford index equal to $c$ that are divisorial complete. 
\end{itemize}

Let us first recall some facts relevant to this paper from divisor theory on smooth curves.

Let $C$ be a smooth curve of genus $g$ and let $J(C)$ be the Jacobian of $C$ parametrizing linear equivalence classes of divisors $D$ of degree 0 on $C$.
Choosing a base point $P_0 \in C$ one considers $W^r_d(C)=\{ D \in J(C) : \dim \vert D+dP_0 \vert \geq r \}$.
An element $D$ of $W^r_d(C) \setminus W^{r+1}_d(C)$ gives rise to a complete linear system $g^r_d$ equal to $\vert D+dP_0 \vert$.
We refer to \cite{ref4} for the theory of divisors on curves.

In \cite{ref3} the main result is the so-called H. Martens' Theorem.
If $r$ and $d$ are integers with $1 \leq r \leq g-2$ and $0 \leq d \leq g-2+r$ then $\dim (W^r_d(C)) \leq d-2r$ and $\dim (W^r_d (C))=d-2r$ if and only if $C$ is hyperelliptic.
The curve $C$ is hyperelliptic if and only if $W^1_2(C) \neq \emptyset$ (i.e. $C$ has a $g^1_2$).

Later on more results are proved giving rise to the statement that a ''large'' dimension of some $W^r_d(C)$ gives rise to a ''small'' gonality of $C$ (the gonality of $C$ is the smallest integer $e$ such that $C$ has a $g^1_e$).
See e.g. \cite{ref4} (Chapter IV, Theorem 5.2 and exercise series G of Chapter IV), \cite{a} and \cite{b} (Theorem 15).

Fixing a degree $d$, the Riemann-Roch Theorem gives a lower bound on $r$ such that $C$ has a complete $g^r_d$.
Also it immediately implies the existence of complete $g^r_d$ in case $d \leq g$ and $r=0$ and in case $g \leq d \leq 2g-2$ and $r=d-g+1$.
The Clifford index $\Cliff (g^r_d)$ of a complete linear system $g^r_d$ is defined to be $d-2r$.
The Clifford index $\Cliff (C)$ of a smooth curve $C$ is the minimal value $\Cliff (g^r_d)$ for complete linear systems satisfying $r \geq 1$ and $r \geq d-g+2$.
Fixing some degree $d$ it gives an upper bound on $r$ such that $C$ has a complete $g^r_d$.
It is known that $\Cliff (C) \geq 0$.
In \cite {ref18} a curve $C$ is called divisorial complete if the only restrictions on $r$ and $d$ for the existence of a complete $g^r_d$ on $C$ comes from the Riemann-Roch Theorem and the value of $\Cliff (C)$.
For a fixed Clifford index $c \geq 3$ it is proved in \cite {ref18} that $C$ cannot be divisorial complete if $g >2c+4$. 

During the recent decades, a theory of divisors on metric graphs is developed having lots of properties similar to those on smooth projective curves.
A divisor $D$ on a metric graph $\Gamma$ has a degree $d$ and a rank $r$ and there is a Riemann-Roch Theorem completely analogue to the case of curves.
In particular, given a fixed degree $d$, it gives the same lower bound on the rank $r$ as in the case of curves.
Also on a metric graph $\Gamma$ one defines similar sets $W^r_d(\Gamma)$ equal to the set of equivalence classes of divisors $D$ on $\Gamma$ of degree $d$ and rank at least $r$.
It has a natural polyhedral set structure, so we can speak of  $\dim (W^r_d(\Gamma))$ (see \cite{ref2}).
In \cite{ref2} the authors give an example of a non-hyperelliptic metric graph $\Gamma$ with $\dim (W^1_3(\Gamma))=1$, so this contradicts the statement of H. Martens' Theorem for metric graphs.

In the same paper the authors introduce so-called Brill-Noether numbers $w^r_d(\Gamma)$ (see Definition \ref{defBNnumber}) such that for a similar definition in the case of smooth curves $C$ one would have $w^r_d(C)=\dim(W^r_d(C))$.
However in the case of metric graphs those numbers have a better behaviour than $\dim(W^r_d(\Gamma))$.
In particular the mentioned example from \cite{ref2} satisfies $w^1_3(\Gamma)=0$.
So it is natural to study the statement of H. Martens' Theorem for metric graphs using those Brill-Noether numbers $w^r_d(\Gamma)$.
By defintion $w^r_d(\Gamma)\geq 0$ is equivalent to $W^r_d(\Gamma) \neq \emptyset$, so $\Gamma$ being hyperelliptic can be defined by the condition $w^1_2(\Gamma)\geq 0$.

In \cite{ref1} we study H. Martens' Theorem for chains of cycles (see Definition \ref{def1}).
We proved that the only counterexamples to the statement of H. Martens' Theorem in the case of chains of cycles are of a very special type and are called Martens-special chains of cycles in \cite{ref1} (see Definition \ref{def5}).
Those Martens-special chains of cycles are non-hyperelliptic and they satisfy $w^1_{g-1}(\Gamma)=g-3$.
Such a Martens-special chain of cycles $\Gamma$ has a type $k \geq 1$ and in \cite{ref1} it is proved that $\dim (W^1_{k+2}(\Gamma))=k$.
Now we study more closely so-called general Martens-special chains of cycles (see Definition \ref{def6}) obtaining the following theorem.

\begin{TheoremA}\label{theoremA}
If $\Gamma$ is a general Martens-special chain of cycles of type k then $w^1_{k+2}(\Gamma)=0$.
In particular $\Gamma$ has gonality $k+2$.
\end{TheoremA}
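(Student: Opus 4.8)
The plan is to establish $w^1_{k+2}(\Gamma)=0$ first, which is the heart of the statement, and then deduce the gonality. The inequality $w^1_{k+2}(\Gamma)\ge 0$ is free: by \cite{ref1} we have $\dim(W^1_{k+2}(\Gamma))=k\ge 1$, so $W^1_{k+2}(\Gamma)\neq\emptyset$, and by Definition \ref{defBNnumber} this forces $w^1_{k+2}(\Gamma)\ge 0$. So everything comes down to $w^1_{k+2}(\Gamma)\le 0$.

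For that I would start from the explicit description of $W^1_{k+2}(\Gamma)$ obtained in \cite{ref1}: for a general Martens-special chain of cycles of type $k$ this is a $k$-dimensional polyhedral set, each of whose classes is represented by a specific reduced divisor whose combinatorial shape is dictated by the $k+1$ ``special'' cycles of $\Gamma$, the $k$ free coordinates being continuous lengths that move points around those cycles. Running Dhar's burning algorithm on the $v_i$-reduced representatives, I would first re-confirm that $\rank(D)=1$ throughout the family (so every such $D$ is an honest $g^1_{k+2}$), and then --- this is the crux --- show that as the $k$ parameters vary, the effective divisors realizing rank $1$ stay supported on a fixed proper sub-chain of $\Gamma$ while the remaining points of $D$ are pinned down by the reduction; consequently the whole family consists, up to the equivalence relevant to Definition \ref{defBNnumber}, of a single $g^1_{k+2}$ transported along $k$ ``lingering'' directions. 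Feeding this into Definition \ref{defBNnumber} gives $w^1_{k+2}(\Gamma)\le 0$, hence $w^1_{k+2}(\Gamma)=0$.

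The gonality is then quick: $\gon(\Gamma)\le k+2$ since $W^1_{k+2}(\Gamma)\neq\emptyset$, and $\gon(\Gamma)\ge k+2$ amounts to $W^1_e(\Gamma)=\emptyset$ for $2\le e\le k+1$ (the case $e\le 1$ being excluded because $\Gamma$ is non-hyperelliptic). This last point I would get from the same machinery --- for each $e\le k+1$ one enumerates the candidate reduced divisors of degree $e$ and checks via Dhar's algorithm that none has rank $\ge 1$ --- and it fits into the classification of Martens-special chains of type $k$ from \cite{ref1}, which is precisely what makes degree $k+2$ optimal.

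I expect the main obstacle to be the middle step: showing that the $k$-parameter family genuinely collapses to one pencil in the Brill-Noether sense, rather than producing new linear systems. This needs a careful cycle-by-cycle analysis of how the reduced divisor and its rank-realizing effective divisors change as the parameters cross the walls of the parametrizing polytope, and it is exactly here that the genericity hypothesis of Definition \ref{def6} is indispensable: it rules out the accidental coincidences among edge lengths that would otherwise let the pencil vary nontrivially and push $w^1_{k+2}(\Gamma)$ above $0$.
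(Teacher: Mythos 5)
Your overall architecture is the right one and essentially matches the paper's: bound $w^1_{k+2}(\Gamma)$ below by $0$ via $W^1_{k+2}(\Gamma)\neq\emptyset$, and above by $0$ via a structural description of \emph{all} rank-one classes of degree $k+2$ together with reduced divisors and Dhar's burning algorithm. But the proposal stops exactly where the proof has to begin. The step you flag as ``the main obstacle'' --- showing that the $k$-parameter family of classes in $W^1_{k+2}(\Gamma)$ is rigid enough that some effective divisor $E$ of degree $2$ is contained in no rank-$\ge 1$ divisor of degree $k+2$ --- is the entire content of Theorem A, and you give no argument for it. In particular, ``the family is a single $g^1_{k+2}$ transported along $k$ lingering directions, hence $w^1_{k+2}\le 0$'' is not a valid inference by itself: Definition \ref{defBNnumber} requires you to exhibit a \emph{specific} degree-$2$ divisor $E$ missed by \emph{every} member of \emph{every} class, and as the $k$ parameters move the supports of the divisors sweep out large parts of $\Gamma$, so one must prove that some region is never covered with multiplicity two in the right way. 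The paper does this by proving, via the classification of $\underline{m}$-displacement tableaux on $[(g-k-1)\times 2]$ and a cycle-by-cycle ``moving to the right'' computation, that every rank-$\ge1$ class of degree $k+2$ has a representative $2v_g+F$ with $F$ a $v_g$-reduced divisor supported off $C_g$; only then does the choice $E=w_g+w''_g$ on $C_g$ (with $2v_g\sim w_g+w'_g$ and $w''_g$ generic) yield a contradiction through Dhar's algorithm. None of that computation, nor the choice of $E$, appears in your text.

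Two smaller points. First, the reduction of the gonality claim is simpler than you make it: once $w^1_{k+2}(\Gamma)=0$ is known, $\gon(\Gamma)\ge k+2$ is automatic, since a $g^1_{k+1}$ would give, for every $E=P+Q$ of degree $2$, a rank-$\ge1$ effective divisor of degree $k+2$ containing $E$ (take a representative through $Q$ and add $P$), forcing $w^1_{k+2}(\Gamma)\ge 1$; there is no need to re-enumerate reduced divisors of every degree $e\le k+1$. (The paper proves the gonality separately in Proposition \ref{prop1} by a purely combinatorial count of deleted values in the hyperelliptic tableau, which is where the genericity $m_{j_i}=0$ enters.) Second, your closing remark attributes the role of genericity to ``accidental coincidences among edge lengths''; in the paper it enters concretely as the condition $m_{j_i}=0$, which forbids the value $j_i$ from occurring twice in a two-row tableau and forces the points $\xi_{i'}=\langle 2(s-s')-2\rangle_{j_{i'}}$ constructed in the reduction to avoid $w_{j_{i'}}$. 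As it stands, the proposal is a plan with the decisive lemma missing, not a proof.
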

The gonality of a metric graph $\Gamma$ is the smallest integer $n$ such that $W^1_n(\Gamma) \neq \emptyset$.

This theorem illustrates two facts worth reporting.
\begin{itemize}
\item It illustrates that for metric graphs $\Gamma$ the discrepancy between $w^r_d(\Gamma)$ and $\dim (W^r_d(\Gamma))$ can become as large as possible.
\item It illustrates that metric graphs $\Gamma$ contradicting the statement of H. Martens' Theorem can have much larger gonality than 2.
In particular it illustrates that the weak H. Martens' Theorem for chains of cycles (Proposition 2 in \cite{ref1}) is, despite its weakness, a sharp theorem and that ''large'' values for Brill-Noether numbers do not in general imply ''small'' gonality in the case of metric graphs.
\end{itemize}

In \cite{ref5} one introduces the gonality sequence $g_r(C)$ (with $r \in \mathbb{Z}_{\geq 1}$) of a smooth curve $C$ with $g_r(C)$ being the minimal integer $n$ such that $W^r_n(C) \neq \emptyset$.
In particular $g_1(C)$ is the gonality of $C$.
Afterwards the gonality sequence became an object of study in the theory of divisors on smooth curves (see e.g. \cite{ref6}).
Of course, in the same way one can define gonality sequences of metric graphs and of finite graphs (see Definition \ref{def7}).
Those gonality sequences are studied for complete graphs in \cite{ref7} and for bipartite graphs in \cite{ref8} obtaining nice similarities with the gonality sequences of certain special related smooth curves.
A more general systematic study of gonality sequences of finite graphs is started in e.g. \cite{ref9} and \cite{ref10}.
Now we prove that general Martens-special chains of cycles have very special gonality sequences.

\begin{TheoremB}
If $\Gamma$ is a general Martens-special chain of cycles of type $k$ and genus $g$ then its gonality sequence is given by
\begin{description}
\item $g_r(\Gamma)=k+2r$ for $1 \leq r \leq g-k-1$
\item $g_r(\Gamma)=r+g-1$ for $g-k \leq r \leq g-1$
\item $g_r(\Gamma)=r+g$ for $r \geq g$
\end{description}
\end{TheoremB}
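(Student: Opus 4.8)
\emph{Plan.} Since $r(D+P)\geq r(D)$ for every point $P$, one has $W^r_d(\Gamma)\neq\emptyset\Rightarrow W^r_{d+1}(\Gamma)\neq\emptyset$; so $g_r(\Gamma)$ is determined once, for the claimed value $n_0$, we exhibit a divisor of degree $n_0$ and rank $\geq r$ and rule out every divisor of degree $n_0-1$ and rank $\geq r$. Throughout I use the Riemann--Roch theorem for metric graphs, Clifford's inequality $r(D)\leq\deg(D)/2$ for divisors with $r(D),r(K-D)\geq 0$, and Theorem A ($\gon(\Gamma)=k+2$); I also use that $g\geq k+2$, which holds for any Martens-special chain of type $k$, so that the three displayed ranges meet without gap or overlap.

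\emph{The ranges $r\geq g$ and $g-k\leq r\leq g-1$.} If $r\geq g$ and $\deg D\leq g+r-1$ with $r(D)\geq r$, then $r(K-D)\geq r-\deg D+g-1\geq 0$ by Riemann--Roch, so Clifford gives $r\leq r(D)\leq(g+r-1)/2<r$, a contradiction; and an effective divisor of degree $g+r$ has $\deg(K-D)<0$, hence rank $r$. So $g_r(\Gamma)=g+r$. If $g-k\leq r\leq g-1$ and $\deg D=r+g-2$ with $r(D)\geq r$, then $r(K-D)\geq 1$ while $\deg(K-D)=g-r\leq k<k+2=\gon(\Gamma)$, contradicting $r(K-D)\geq 1$; and $g_r(\Gamma)\leq r+g-1$ is witnessed by $D=K-E$, where $E$ is any effective divisor of degree $g-1-r\leq k-1$ with $r(E)=0$ --- such $E$ exists because $g-1-r<\gon(\Gamma)$ --- since then $r(D)=r(E)+\deg(D)-g+1=r$. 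So $g_r(\Gamma)=r+g-1$ in this range.

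\emph{The range $1\leq r\leq g-k-1$.} Here $(k+2r)-2r=k$ does not depend on $r$, so one expects a whole string of linear systems of the minimal Clifford index $k$. The case $r=1$ is Theorem A. For $2\leq r\leq g-k-1$ I must prove (i) $W^r_{k+2r}(\Gamma)\neq\emptyset$ and (ii) $W^r_{k+2r-1}(\Gamma)=\emptyset$. A divisor $D$ counted in (ii) is special, with $r(K-D)\geq g-k-r\geq 1$ and $\deg(K-D)=2g-k-2r-1$, so $K-D\in W^{g-k-r}_{k+2(g-k-r)-1}(\Gamma)$; thus (ii) is invariant under $r\leftrightarrow g-k-r$, which in particular shows it cannot be obtained by a naive downward induction from Theorem A. Such a $D$ is a complete special $g^r_{k+2r-1}$ that contributes to the Clifford index and has Clifford index $k-1$, so (ii) for the whole range follows at once from $\Cliff(\Gamma)=k$; the inequality $\Cliff(\Gamma)\leq k$ is immediate from the gonality pencil of Theorem A, and the reverse inequality is the substantial point. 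At the extreme values $r=1$ and $r=g-k-1$ both (i) and (ii) stay elementary: (ii) at $r=g-k-1$ is, by the duality above, just $W^1_{k+1}(\Gamma)=\emptyset$, i.e. Theorem A; and (i) at $r=g-k-1$ is realized by $D=K-E$ with $E$ effective of degree $k$ and rank $0$. So the genuine content is the band $2\leq r\leq g-k-2$, non-empty only for $g\geq k+4$.

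\emph{The main obstacle.} For $2\leq r\leq g-k-2$ I would pass to the combinatorial model of divisor classes on a chain of cycles: the cells of $W^r_d(\Gamma)$ are parametrized by lattice paths, equivalently by rectangular tableaux, with dimensions controlled by the torsion profile of the cycles; and one specializes to the profile of a general Martens-special chain of type $k$ as pinned down in \cite{ref1}. For $\Cliff(\Gamma)\geq k$ one argues that a tableau realizing a divisor of Clifford index $<k$ would be forced through a configuration that the generic Martens-special profile excludes, so no such divisor exists. For (i) one writes the desired $g^r_{k+2r}$ explicitly in the model --- for instance as the class built from the gonality pencil $g^1_{k+2}$ together with a Brill--Noether generic stretch of $\Gamma$ of the right length, or as the divisor attached to the extremal tableau of the relevant shape --- and checks $r(D)\geq r$ via the combinatorial rank formula for chains of cycles. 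I expect (i) and $\Cliff(\Gamma)\geq k$ to be the two faces of one combinatorial lemma describing exactly which tableaux occur for this chain --- the same lemma that yields its divisorial completeness --- and I expect the bookkeeping there (tracking exact degrees, not merely ranks) to be the hardest part of the whole argument.
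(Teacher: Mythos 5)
Your treatment of the two upper ranges is correct and is essentially the paper's Lemma \ref{lemmaE1}: for $r\geq g$ you use Riemann--Roch and Clifford, and for $g-k\leq r\leq g-1$ you use Riemann--Roch together with $\gon(\Gamma)=k+2$ to rule out degree $r+g-2$ and the divisor $K-E$ to realize degree $r+g-1$. The gap is in the main range $1\leq r\leq g-k-1$, which is the entire content of the theorem. You correctly reduce it to (i) $W^r_{k+2r}(\Gamma)\neq\emptyset$ and (ii) $\Cliff(\Gamma)\geq k$, and you verify both only at the endpoints $r=1$ and $r=g-k-1$; for the interior band $2\leq r\leq g-k-2$ you offer only a plan (``one argues that a tableau \dots would be forced through a configuration that the generic profile excludes'', ``I expect (i) and $\Cliff(\Gamma)\geq k$ to be the two faces of one combinatorial lemma''). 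Neither statement is proved there, so the proposal as written does not establish the theorem.

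Both missing pieces have short resolutions that your sketch does not anticipate. For (ii), no Martens-special-specific combinatorics is needed: the identity $\Cliff(\Gamma)=\gon(\Gamma)-2$ holds for \emph{every} chain of cycles (this is the result of \cite{ref12} invoked in Lemma \ref{lemmaE1}), so $\gon(\Gamma)=k+2$ already forces $d-2r\geq k$ for every divisor contributing to the Clifford index, i.e. $g_r(\Gamma)\geq k+2r$ throughout the range; if you refuse the citation you must prove a statement about arbitrary chains of cycles, not about the Martens-special profile. For (i), the paper simply writes down the tableau: enumerate $\{1,\dots,g\}\setminus\{j_1,\dots,j_k\}=\{l_1<\dots<l_{g-k}\}$ and set $t_r(i,j)=l_{i+j-1}$ on $[(g-k-r)\times(r+1)]$. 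Each value occupies a single anti-diagonal $i+j=\mathrm{const}$, on which $x-y$ has constant parity, and every repeated value $l_s$ has torsion $m_{l_s}=2$, so the displacement condition holds and Theorem \ref{theorem1} yields a divisor of degree $k+2r$ and rank at least $r$. The ``hard bookkeeping'' you anticipate does not arise here; the genuinely delicate combinatorics of the paper is reserved for Theorem A ($w^1_{k+2}(\Gamma)=0$), not for Theorem B.
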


Also this theorem illustrates two facts worth reporting.
\begin{itemize}
\item For $k \geq 5$ such gonality sequence cannot occur for smooth curves if $g$ is large enough.
It should be noted that in \cite{ref9} and \cite{ref10} one also finds the existence of gonality sequences for finite graphs that cannot occur for smooth curves.
\item Taking into account the results of \cite{ref12} one finds lower bounds for $g_r(\Gamma)$ in case $\Gamma$ is a chain of cycles with fixed value $g_1(\Gamma)=d$.
Those lower bounds are obtained by taking into account the Riemann-Roch Theorem and the Clifford index of $\Gamma$ and nothing else (see Lemma \ref{lemmaE1}).
The theorem gives examples showing those lower bounds are sharp.
\end{itemize}

More generally we prove that for a fixed degree $d$ on a general Martens-special chain of cycles $\Gamma$ for each value $r$ satisfying the Riemann-Roch Theorem and the value of the Clifford index of $\Gamma$ there exists a divisor $D$ of degree $d$ and rank $r$ on $\Gamma$ (see Theorem \ref{theoremdivisorialComplete}). This means a general Martens-special chain of cycles of type $k$ is divisorial complete of Clifford index k (see Definition \ref{DivCompleteMetricGraph}).
Since the genus of $\Gamma$ can be taken arbitrary large this is again very different from the case of curves.

Since the discrete case of a finite graph is interesting too, we prove a similar result for finite graphs.
We introduce discrete chains of cycles and we define general Martens-special discrete chains of cycles of type $k$.
We obtain the following discrete version of Theorem B.

\begin{TheoremC}
If $G$ is a general Martens-special discrete chain of cycles of type $k$ and genus $g$ then its gonality sequence is given by
\begin{description}
\item $g_r(G)=k+2r$ for $1 \leq r \leq g-k-1$
\item $g_r(G)=r+g-1$ for $g-k \leq r \leq g-1$
\item $g_r(G)=r+g$ for $r \geq g$
\end{description}
\end{TheoremC}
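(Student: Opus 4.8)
The plan is to deduce Theorem C from Theorem B (and from the lower-bound ingredients already assembled for it) by comparing divisor theory on the finite graph $G$ with divisor theory on an associated metric graph. To $G$ one attaches the metric chain of cycles $\Gamma$ obtained by giving every edge length $1$; by the subdivision invariance of the rank (the specialization machinery of \cite{ref2}, \cite{b} together with the rank theorem of Hladk\'y--Kr\'al--Norine) one has $\rank_G(D)=\rank_\Gamma(D)$ for every $D\in\divi(G)$, and the vertex set of $G$ is a rank-determining set for $\Gamma$. One then checks that the definition of a general Martens-special \emph{discrete} chain of cycles of type $k$ is exactly the combinatorial/arithmetic translation of the metric Definition \ref{def6}, so that $\Gamma$, for divisor-theoretic purposes, behaves as a general Martens-special chain of cycles of type $k$ and Theorems A and B apply to it. The gonality sequence of $G$ is then squeezed between that of $\Gamma$ and the explicit values, and the two bounds are proved separately.

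For the lower bound, a divisor on $G$ of degree $n$ and rank $\geq r$ is, via $\divi(G)\hookrightarrow\divi(\Gamma)$ and $\rank_G=\rank_\Gamma$, a divisor of the same degree and rank on $\Gamma$; hence $g_r(G)\geq g_r(\Gamma)$. Since $\Gamma$ is a chain of cycles, the lower bounds of Lemma \ref{lemmaE1} (coming only from Riemann--Roch and the value of the Clifford index, i.e. from \cite{ref12}) apply, and it suffices to know $\Cliff(\Gamma)=k$. The inequality $\Cliff(\Gamma)\leq k$ follows from the $g^1_{k+2}$ produced in the upper-bound step; for $\Cliff(\Gamma)\geq k$ one must exclude a $g^r_n$ with $r\geq 1$, $\rank_\Gamma(K-D)\geq 1$ and $n-2r<k$, which is precisely the content of the classification of counterexamples to H. Martens' Theorem for chains of cycles from \cite{ref1}, applied to $\Gamma$, together with the genericity built into the hypothesis on $G$ (the same ingredient that forces $w^1_{g-1}=g-3$ and excludes a smaller Martens type). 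Granting $\Cliff(\Gamma)=k$, a routine Riemann--Roch/Serre-duality bookkeeping split into the ranges $1\leq r\leq g-k-1$, $g-k\leq r\leq g-1$, and $r\geq g$ (the last being immediate, since a divisor of degree $r+g\geq 2g-1$ is nonspecial of rank $r$) yields the stated lower bounds for $g_r(\Gamma)$, hence for $g_r(G)$.

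For the upper bound one must exhibit, for each $r$, an effective divisor on $G$ of the prescribed degree, supported at vertices, of rank $\geq r$. In the ranges $g-k\leq r\leq g-1$ and $r\geq g$ this is formal: working inside $\divi(G)$, the divisors $(r+g)v$ and $K_G-E'$ with $E'$ effective of degree $g-1-r$ have, by Riemann--Roch (and $\rank_G(E')=0$, once the $r=1$ case below has established $\gon(G)=k+2$), rank $r$ and degree $r+g$, respectively $r+g-1$, on any graph of genus $g$. The substantive range is $1\leq r\leq g-k-1$, where a divisor of degree $k+2r$ and rank $r$ is needed; here one repeats the construction from the proof of Theorem B --- the explicit chip configurations and tableaux along the chain --- but carried out at vertices of $G$, the discreteness being harmless because the construction is purely combinatorial and the numerical genericity used in the rank verification is exactly what the hypothesis on $G$ supplies. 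Together with $g_r(G)\geq g_r(\Gamma)$ this forces equality throughout, proving Theorem C.

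The main obstacle is the transfer between the metric and the discrete settings. In the metric case ``general'' refers to genericity of the (real) edge lengths, which a finite graph can never literally satisfy; one must instead isolate the finitely many arithmetic conditions on the cycle sizes --- non-vanishing of the relevant torsion classes of the cycle groups, together with the size inequalities governing which Martens-special phenomenon occurs --- that the proofs of Theorems A and B actually use, and verify both that these can hold with integer edge lengths and that no step of those proofs secretly exploits irrationality of the lengths. Equivalently, and perhaps more robustly, one realizes $G$ as a special fibre of a one-parameter family of metric chains of cycles whose generic member is a general Martens-special chain of type $k$, pushes the lower bounds down by upper semicontinuity of the rank, and lifts the optimal divisors up by a specialization/lifting argument in the style of \cite{ref2}; keeping the lifted divisors vertex-supported and controlling the limiting combinatorial type is where the real work lies.
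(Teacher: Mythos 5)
Your overall architecture matches the paper's: pass to the metric graph $\Gamma(G)$ with unit edge lengths, use the Hladk\'y--Kr\'al--Norine rank equality $\rank_G(D)=\rank_{\Gamma(G)}(D)$ to move between the two settings, reuse the explicit tableaux from the proof of Theorem B (which involve only the cycles with $m_i=2$ and so survive discretization untouched) for the upper bounds, and get the lower bounds from Riemann--Roch together with $\Cliff=\gon-2$ for chains of cycles. But there is a genuine gap at the one step that carries all the content: the lower bound $\gon(G)\geq k+2$. You assert that $\Gamma(G)$ ``behaves as a general Martens-special chain of cycles of type $k$ and Theorems A and B apply to it,'' which is false as stated: Definition \ref{def6} requires $m_{j_i}=0$, i.e.\ an irrational ratio of arc lengths, and a graph with integer edge lengths can never satisfy this. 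You then correctly identify this as ``the main obstacle'' and ``where the real work lies,'' and you offer two strategies (isolating the arithmetic conditions the metric proofs actually use, or a degeneration/semicontinuity argument) --- but you carry out neither, so the lower bound, and with it the Clifford index computation and all the sharp values of $g_r(G)$, is not established.

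The resolution in the paper is concrete and short. The discrete notion of generality is defined by $m_{j_i}>g$ (Definition \ref{MartensSpecialdiscrete}), and the remark following Proposition \ref{prop1} records that the entire deletion-counting argument for the gonality lower bound uses only one consequence of $m_{j_i}=0$: that the value $j_i$ cannot occur twice in an $\underline{m}$-displacement tableau on $[(g-l-1)\times 2]$. That consequence already follows from $m_{j_i}>\min\{2j_i-2,\,2g-2j_i\}$, hence from $m_{j_i}>g$, since the two quantities in the minimum sum to $2g-2$. With this observation the proof of Proposition \ref{prop1} applies verbatim through the discrete tableau criterion (Theorem \ref{theorem1disc}), giving $\gon(G)=k+2$; then $\Cliff(G)=k$ via the discrete version of the gonality--Clifford relation (Proposition \ref{relcliffgondiscgraph}), and the rest is the bookkeeping of Lemma \ref{lemmaE1} exactly as you describe. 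Note also that Theorem A is not needed anywhere in this argument --- only Proposition \ref{prop1} and the tableaux from Theorem B are used --- so invoking it only imports an unnecessary (and, for $\Gamma(G)$, unavailable) hypothesis.
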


In Section \ref{section2} we mention some generalities needed in the arguments of this paper.
In particular we make intensive use of some results from \cite {ref11}.
We also recall the definition of a Martens-special chain of cycles and we introduce the concept of a general Martens-special chain of cycles.

In Section \ref{section3} we prove that a general Martens-special chain of cycles of type $k$ has gonality $k+2$ (see Proposition \ref{prop1}).
The proof of this result is very short in a more special case (but including arbitrary large genus).
For a reader only interested in seeing arguments for the statements mentioned at the beginning of the introduction, this is sufficient.
The proof including all cases is more complicated (but contains no new ideas).
This is also the case for the proof of Theorem A in Section \ref{section5}, which is a continuation of the proof of Proposition \ref{prop1}.
Now the details of the proof in general become much more complicated (called modifications of the proof in the general case).
At the end one case (numbered (5)) is written down in almost all details, not all details for the other cases are written down.
Also after the proof of Proposition \ref{prop1}, the Lemmas \ref{lemma7}, \ref{lemma8}, \ref{lemma9} and \ref{lemma10} are only written down for usage in those modifications of the proof in the general case of Theorem A.

In Section \ref{section4} we recall the definition of gonality sequence (see Definition \ref{def7}) and the concept of a divisorial complete metric graph (see Definition \ref{DivCompleteMetricGraph}).
We give a proof of Theorem B and we prove that a general Martens-special chain of cycles is divisorial complete (see Theorem \ref{theoremdivisorialComplete}).

In Section \ref{section5} we prove Theorem A.
Since the theory of $v$-reduced divisors on metric graphs is essential in the proof of Theorem A, we start by recalling that theory (it was already used in the proof of Theorem \ref{theoremdivisorialComplete}).
As mentioned before, we first give a proof in a special case (which is already more involved than the proof of Proposition \ref{prop1} on the gonality) and for making the statement complete we give the modifications for the proof in the general case (but this is not needed any more to illustrate the statements at the beginning of the introduction).

In Section 6 we give a proof of Theorem C.
This is included because in the literature (see e.g. \cite{ref9} and \cite{ref10}) much attention goes to gonality sequences of finite graphs $G$.
In particular Theorem C shows that fixing the Clifford index of a finite graph $G$ the gonality sequence can be as strong as possible (having smallest possible values).
In order to proof Theorem C we first need to define a general Martens-special discrete chain of cycles (see Definition \ref{MartensSpecialdiscrete}).
For the proof we also need to translate some results from \cite{ref11} to the case of discrete chains of cycles.
Although I am sure that specialists know this translation it seems not being available in the literature, therefore it is included in Section \ref{section6}.

\section{Generalities}\label{section2}

Since this paper is a sequel to \cite{ref1}, we refer to that paper for some terminology concerning metric graphs.

\begin{definition}\label{def1}
A chain of cycles $\Gamma$ of genus $g \geq 1$ is a metric graph consisting of $g$ cycles $C_1, \cdots, C_g$ that are connected for $1 \leq i \leq g-1$ by a segment going from $w_i \in C_i$ to $v_{i+1} \in C_{i+1}$.
We also assume there is some $v_1 \in C_1$ and $w_g \in C_g$ and $v_i \neq w_i$ for $1 \leq i \leq g$.
\end{definition}
We will use those notations $(C_i,v_i,w_i)$ for a chain of cycles throughout the paper.

\begin{figure}[h]
\begin{center}
\includegraphics[height=2 cm]{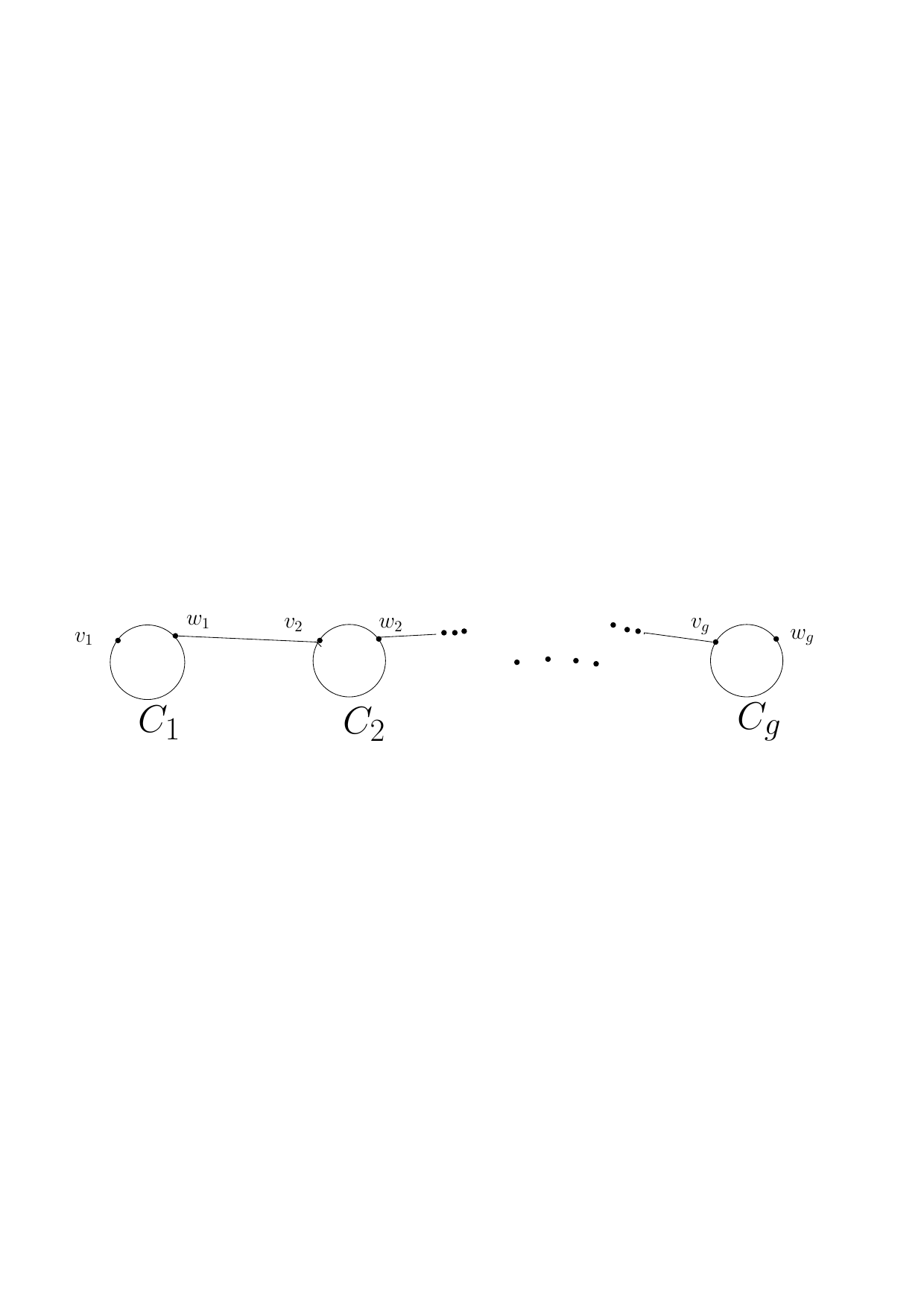}
\caption{a chain of cycles }\label{Figuur 1}
\end{center}
\end{figure}

Further on, if we talk about the union $C_1 \cup \cdots \cup C_i$ for some $1 < i < g$ then we mean this union including the segments $[w_j, v_{j+1}]$ for $1 \leq j < i$.
We also assume some orientation is chosen on each cycle $C_i$.
We recall the definition of the torsion profile $\underline{m}=(m_2, \cdots , m_g)$ of a chain of cycles $\Gamma$ coming from \cite{ref11}, Definition 1.9.

\begin{definition}\label{def2}
Let $l_i$ be the lenght of the cycle $C_i$ and let $l(v_i,w_i)$ be the lenght of the positive oriented arc on $C_i$ from $v_i$ to $w_i$.
In case $l_i$ is an irrational multiple of $l(v_i,w_i)$ then $m_i=0$.
Otherwise $m_i$ is the minimal positive integer such that $m_i.l(v_i,w_i)$ is an integer multiple of $l_i$.
\end{definition}

Following \cite{ref11}, Definition 3.1, we use the following convention to denote points on $C_i$.

\begin{definition}\label{def3}
For $\xi \in \mathbb{R}$ let $<\xi >_i$ denote the point on $C_i$ that is located $\xi .l(v_i,w_i)$ units from $w_i$ using a positive oriented path.
\end{definition}

It follows that, for integers $n_1$,$n_2$ one has $<n_1>_i=<n_2>_i$ if and only if $n_1 \equiv n_2 \mod{m_i}$ ($m_i$ as in Definition \ref{def2}).
Note also that $<0>_i=w_i$ and $<-1>_i=v_i$.

We refer to Section 1 of \cite{ref1} for definitions of divisors and their ranks on metric graphs.
From \cite{ref11}, Lemma 3.3, we obtain

\begin{lemma}\label{lemma1}
Let $D$ be any divisor of degree $d$ on a chain of cycles of genus $g$.
Then $D$ is equivalent to a unique divisor of the form $\sum_{i=1}^g <\xi _i>_i +(d-g).w_g$.
\end{lemma}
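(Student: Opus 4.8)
The plan is to establish existence and uniqueness separately, with the genus-one case serving as the engine for both. The genus-one fact I will use is: on a single cycle $C$ with a marked node $q$, a divisor of degree $e$ is equivalent to a \emph{unique} divisor $P+(e-1)q$. Surjectivity of $P\mapsto [P-q]$ onto $\mathrm{Pic}^0(C)$ is a short group-theoretic computation once one identifies $\mathrm{Pic}^0(C)$ with the circle itself; injectivity is the classical slope argument, namely if $\divi(f)=<\xi>_1-<\xi'>_1$ for a piecewise-linear $f$ with integer slopes, then $f$ has some integer slope $s$ on one of the two arcs determined by the two points and $s-1$ on the other, and the fact that the slope integrates to $0$ around the loop forces the length ratio of the arcs to make $s$ non-integral unless $<\xi>_1=<\xi'>_1$.

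\textbf{Existence} (induction on $g$). First, any chip lying in the interior of a bridge $[w_i,v_{i+1}]$ can be moved to an endpoint of that bridge by adding the divisor of a function that is linear on a sub-segment of the bridge and locally constant elsewhere; so we may assume $D$ is supported on $C_1\cup\cdots\cup C_g$. Put $\Gamma'=C_1\cup\cdots\cup C_{g-1}$. Since each $C_i$ meets the rest of $\Gamma$ in a single node, the genus-one fact applied inside $\Gamma'$ together with the induction hypothesis (realizing functions on $\Gamma'$, extended by a constant over the remaining bridge and over $C_g$) gives $D\sim\sum_{i=1}^{g-1}<\xi_i>_i+a\,w_{g-1}+D|_{C_g}$ for some integer $a$. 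The divisor of a function that is linear with slope $\pm a$ on the bridge $[w_{g-1},v_g]$ and locally constant on $\Gamma'$ and on $C_g$ equals $a\,v_g-a\,w_{g-1}$, so $D\sim\sum_{i=1}^{g-1}<\xi_i>_i+\bigl(a\,v_g+D|_{C_g}\bigr)$, where $a\,v_g+D|_{C_g}$ has degree $d-g+1$ on $C_g$. One final application of the genus-one fact on $C_g$ produces $<\xi_g>_g+(d-g)w_g$. No effectivity is used anywhere; all divisors and all slopes are allowed to be arbitrary integers.

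\textbf{Uniqueness.} Suppose $\sum_i<\xi_i>_i+(d-g)w_g\sim\sum_i<\xi_i'>_i+(d-g)w_g$; subtracting, $\sum_i(<\xi_i>_i-<\xi_i'>_i)=\divi(f)$ for some piecewise-linear $f$ with integer slopes. Since $\divi(f)$ places no mass on any bridge interior, $f$ is linear on each bridge. Restricting to $C_g$, the divisor $\divi(f|_{C_g})$ on the circle $C_g$ equals $<\xi_g>_g-<\xi_g'>_g+c\,v_g$, where $c$ is the incoming slope of $f$ along the bridge at $v_g$. Because $\divi(f|_{C_g})$ has degree $0$ on $C_g$, we get $c=0$; hence $f$ is constant on that bridge and $\divi(f|_{C_g})=<\xi_g>_g-<\xi_g'>_g$, so the genus-one injectivity statement forces $<\xi_g>_g=<\xi_g'>_g$. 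Now delete $C_g$ and its bridge: $f$ restricts to $C_1\cup\cdots\cup C_{g-1}$ with divisor $\sum_{i\le g-1}(<\xi_i>_i-<\xi_i'>_i)$, and we conclude by induction.

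The conceptual content is entirely in the genus-one slope computation; the real work, and the main technical obstacle, is the bookkeeping in the degenerate configurations where one or more of the points $<\xi_i>_i$ or $<\xi_i'>_i$ happens to coincide with a node $v_i$ or $w_i$, so that the correction terms (the integer $c$ above and the slopes of $f$ leaking across bridges) must be tracked carefully to keep the inductive step clean. This is exactly the content of \cite{ref11}, Lemma 3.3.
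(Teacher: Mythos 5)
Your argument is correct, but note that the paper does not prove this statement at all: Lemma \ref{lemma1} is imported verbatim from \cite{ref11} (Lemma 3.3), so there is no in-paper proof to compare against. What you have written is, in effect, a self-contained proof of the cited result, and it is the standard one: the genus-one statement (every degree-$e$ divisor on a circle is equivalent to a unique $P+(e-1)q$, with injectivity from the slope computation $s=b/\ell\notin\mathbb{Z}$ for $0<b<\ell$) is sound, the existence induction correctly pushes the excess degree rightward across the bridges, and the uniqueness step correctly extracts $c=0$ from the degree count on $C_g$ before peeling off the last cycle. It is worth pointing out that the paper's proof of the discrete analogue, Lemma \ref{lemma1discrete}, follows exactly your existence mechanism (reduce each $D|_{G_i}$ to degree one by sliding chips along the connecting edges, then apply the genus-one reduction cycle by cycle), but for uniqueness it takes the opposite route to yours: it transfers the question to the metric graph $\Gamma(G)$ and invokes the uniqueness in Lemma \ref{lemma1}, whereas you prove that uniqueness directly by restricting the rational function cycle by cycle. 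Your closing remark correctly identifies the only delicate point, namely the bookkeeping when some $<\xi_i>_i$ lands on a node $v_i$ or $w_i$; as you observe, the degree-zero argument for $c=0$ goes through regardless of where those points sit, so this does not create a gap.
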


We are going to call this unique divisor from Lemma \ref{lemma1} the representing divisor of $D$.
The following definition also comes from \cite{ref11}.

\begin{definition}\label{def4}
For positive integers $m$ and $n$ we write $[m \times n]$ to denote the set $\{ 1, \cdots , m\}\times \{1, \cdots, n\}$.
It is represented by a rectangle with $n$ rows and $m$ columns.
The element on the $i$-th column and the $j$-th row corresponds to $(i,j) \in [m\times n]$.

\begin{figure}[h]
\begin{center}
\includegraphics[height=2 cm]{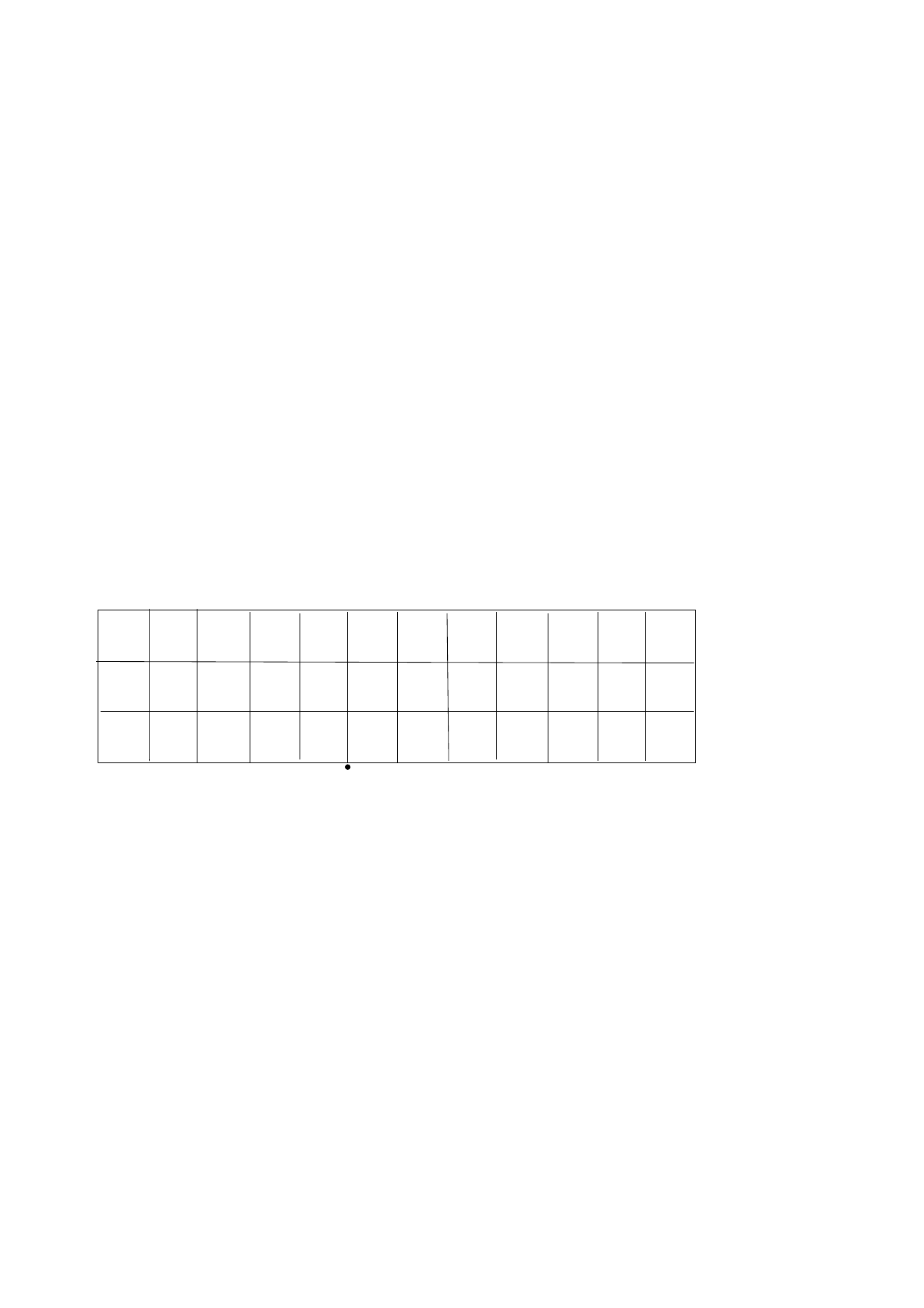}
\caption{rectangle $[12 \times 3]$ }\label{Figuur 2}
\end{center}
\end{figure}

Let $\underline{m}$ be a possible torsion profile of a chain of cycles of genus $g$.
An $\underline{m}$-displacement tableau on $[m \times n]$ of genus $g$ is a function $t:[m \times n] \rightarrow \{ 1, \cdots ,g\}$ such that
\begin{enumerate}
\item $t$ is strictly increasing in any given row and column of $[m \times n]$.
\item if $t(x,y)=t(x',y')$ then $x-y \equiv x'-y' \mod{m_{t(x,y)}}$.
\end{enumerate}
\end{definition}

We are going to make intensive use of the following statement which is part of Theorem 1.4 in \cite{ref11}.

\begin{theorem}\label{theorem1}
Let $\Gamma$ be a chain of cycles of genus $g$ with torsion profile $\underline{m}$.
A divisor $D$ on $\Gamma$ of degree $d$ has rank at least $r$ if and only if on $\lambda = [(g-d+r) \times (r+1)]$ there exists an $\underline{m}$-displacement tableau $t$ such that for the representing divisor $\sum_{i=1}^g <\xi _i>_i +(d-g)w_g$ of $D$ one has $\xi _{t(x,y)} \equiv x-y \mod{m_{t(x,y)}}$ for all $(x,y) \in \lambda$.
\end{theorem}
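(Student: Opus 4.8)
The plan is to deduce the statement from the Baker--Norine description of divisor rank together with an explicit, cycle-by-cycle analysis of reduced divisors on a chain of cycles, in the spirit of the tropical Brill--Noether arguments of Cools--Draisma--Payne--Robeva. Recall that $\rank(D) \ge r$ is equivalent to: $|D - E| \neq \emptyset$ for every effective divisor $E$ of degree $r$, and that $|D'| \neq \emptyset$ is tested by putting $D'$ in $q$-reduced form for a conveniently chosen base point $q$ (I would take $q = w_g$, to match the representing divisor of Lemma~\ref{lemma1}) and checking that its coefficient at $q$ is non-negative, since a $q$-reduced divisor is automatically effective away from $q$. Thus the theorem reduces to translating the assertion ``for every effective $E$ of degree $r$, the $w_g$-reduced form of $D-E$ is effective'' into the combinatorial statement about $\underline{m}$-displacement tableaux.

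The first technical ingredient is an explicit description of reduction on $\Gamma$. Running Dhar's burning algorithm from $w_g$, the reduction proceeds one cycle at a time along $C_1, C_2, \dots, C_g$: on each $C_i$ a divisor can be normalised so that, apart from a number of chips that it hands across the bridge $[w_i, v_{i+1}]$, it is supported at a single point $\langle \eta_i \rangle_i$, and — this is where the torsion enters — only the residue of $\eta_i$ modulo $m_i$ controls where a chip is allowed to sit and whether it can be pushed onward. Starting from $D$ with representing divisor $\sum_i \langle \xi_i \rangle_i + (d-g)w_g$ and subtracting the points of $E$, one obtains a bookkeeping rule: a point subtracted on $C_i$ either decrements the local chip count or forces a cascade of reductions through the subsequent cycles, and $D - E$ fails to be equivalent to an effective divisor exactly when, at some stage, a required chip count drops below zero. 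I would set this up by induction along the cycles, carrying along the partially reduced configuration as a ``state''.

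The core of the argument is to see that, as $E$ ranges over all effective divisors of degree $r$, these obstructions organise themselves precisely into the tableau condition. I would first show that it is enough to test $E$ over a finite, explicitly described family of extremal divisors (supported at the marked points), and that for such $E$ the cycles that get ``used up'' during reduction can be recorded in the rectangle $[(g-d+r)\times(r+1)]$: the row index $y \in \{1,\dots,r+1\}$ keeps track of how many of the $r$ subtracted chips have already been absorbed, the column index $x \in \{1,\dots,g-d+r\}$ runs over the Riemann--Roch--type count of ``free'' slots (the speciality $g-d+r$), a cycle $C_i$ can serve the box $(x,y)$ exactly when $\xi_i \equiv x-y \pmod{m_i}$, and — because the cycles appear in their fixed linear order $1,\dots,g$ along $\Gamma$ — the resulting labelling function must be strictly increasing along rows and columns, while two boxes receiving the same cycle are forced to be congruent mod $m_i$. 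Conversely, given an $\underline{m}$-displacement tableau $t$ with $\xi_{t(x,y)} \equiv x-y \pmod{m_{t(x,y)}}$, one constructs for each $E$ an explicit effective divisor equivalent to $D - E$ by reading off a lattice path through $t$, which yields $\rank(D) \ge r$.

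I expect the main obstacle to be exactly this last combinatorial translation: identifying which $E$ are extremal and proving that the family of position constraints they impose is simultaneously satisfiable if and only if a single tableau on $[(g-d+r)\times(r+1)]$ exists. The supporting step — a fully rigorous, uniform description of $w_g$-reduced divisors on a chain of cycles in the presence of arbitrary torsion $m_i$, including the degenerate case $m_i = 0$ — is routine but delicate, and it is where the bulk of the casework lives.
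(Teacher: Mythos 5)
First, a point of reference: the paper does not prove this statement at all — Theorem \ref{theorem1} is imported verbatim as part of Theorem 1.4 of Pflueger \cite{ref11}, so there is no internal proof to compare yours against. The relevant benchmark is Pflueger's own argument, which generalizes the lingering-lattice-path method of Cools--Draisma--Payne--Robeva from generic chains of loops to arbitrary torsion profiles $\underline{m}$. Your sketch correctly identifies the opening moves of that argument: $\rank(D)\ge r$ iff $D-E$ is equivalent to an effective divisor for every effective $E$ of degree $r$, this is tested with $w_g$-reduced divisors and Dhar's algorithm, and the reduction propagates cycle by cycle with the residue of $\xi_i$ modulo $m_i$ controlling what each cycle can absorb.

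The genuine gap is exactly where you flag "the main obstacle": the equivalence between (i) simultaneous satisfiability, over all $E$, of the resulting cascade of chip-count conditions and (ii) the existence of a single $\underline{m}$-displacement tableau on $[(g-d+r)\times(r+1)]$ \emph{is} the theorem, and your proposal describes the intended dictionary (rows record absorbed chips, columns record speciality, admissibility of $C_i$ at box $(x,y)$ is the congruence $\xi_i\equiv x-y \pmod{m_i}$) without proving either implication. In particular the strict monotonicity of $t$ along rows and columns, and the congruence $x-y\equiv x'-y' \pmod{m_{t(x,y)}}$ when $t(x,y)=t(x',y')$, are asserted to "organise themselves" rather than derived. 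Pflueger bridges this by an intermediate object — a lattice path in $\mathbb{Z}^r$ attached to $D$ whose steps (including "lingering" steps) are dictated by the residues $\xi_i \bmod m_i$ — together with a proof that $\rank(D)\ge r$ iff the path stays in the open Weyl chamber, and a separate combinatorial bijection between such paths and displacement tableaux; some such intermediate structure, or an explicit double induction on cycles and on the degree of $E$, is needed to close your argument. Two further loose ends: the restriction to a finite family of "extremal" $E$ requires Luo's theory of rank-determining sets (not invoked), and the case $m_i=0$ — the one this paper actually exploits for general Martens-special chains, where the congruence class determines $\langle\xi_i\rangle_i$ uniquely and $i$ can occur at most once in any tableau — needs explicit treatment rather than being folded into "routine but delicate" casework.
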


In \cite{ref1} we introduced the so-called Martens-special chains of cycles.

\begin{definition}\label{def5}
A chain of cycles $\Gamma$ of genus $g$ is called a Martens-special chain of cycles of type $k$ if and only if there exist integers $3 \leq j_1 \leq j_2 \leq \cdots \leq j_k \leq g-2$ with $j_{i+1}-j_i\geq 2$ for $1 \leq i \leq k-1$ such that
\begin{description}
\item $m_i \neq 2$ for $i\in \{ j_1, \cdots, j_k \}$
\item $m_i = 2$ for $i \in \{ 2, \cdots , g \} \setminus \{j_1 , \cdots , j_k \}$
\end{description}
\end{definition}
We will use those notations $j_1, \cdots, j_k$ for a Martens-special chain of cycles of type $k$ throughout this paper.

In \cite{ref1} it is proved that those are exactly the chains of cycles $\Gamma$ that are not hyperelliptic although $w^1_{g-1}(\Gamma)=g-3$.
It is also proved in \cite{ref1} that $\dim (W^1_{k+2}(\Gamma))=k$ for such Martens-special chains of cycles.
In this paper we study in more detail the "general" case.

\begin{definition}\label{def6}
A Martens-special chain of cycles $\Gamma$ is called general if $m_i=0$ in case $m_i \neq 2$.
\end{definition}

\section{The gonality of a general Martens-special chain of cycles}\label{section3}

\begin{proposition}\label{prop1}
Let $\Gamma$ be a general Martens-special chain of cycles of type $k$.
Then the gonality of $\Gamma$ is equal to $k+2$.
\end{proposition}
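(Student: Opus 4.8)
The plan is to use Theorem \ref{theorem1} to translate the gonality statement into a combinatorial question about $\underline{m}$-displacement tableaux. Since a divisor of degree $d$ has rank at least $1$ iff there exists an $\underline{m}$-displacement tableau on $\lambda = [(g-d+1) \times 2]$ compatible with the representing divisor, to show $\gon(\Gamma) = k+2$ I must do two things: first, produce a $g^1_{k+2}$ (equivalently, show $W^1_{k+2}(\Gamma) \neq \emptyset$), and second, show $W^1_{k+1}(\Gamma) = \emptyset$. For the existence part, recall that $\dim(W^1_{k+2}(\Gamma)) = k$ is already proved in \cite{ref1}, so $W^1_{k+2}(\Gamma) \neq \emptyset$ is immediate; alternatively one can exhibit an explicit $\underline{m}$-displacement tableau on $[(g-k-1)\times 2]$ directly. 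So the content is entirely in the lower bound: no divisor of degree $k+1$ has rank $\geq 1$.

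First I would set $d = k+1$, so $\lambda = [(g-k)\times 2]$, and suppose for contradiction there is an $\underline{m}$-displacement tableau $t : \lambda \to \{1,\dots,g\}$ compatible with some representing divisor. The key structural fact is the torsion profile of a general Martens-special chain: $m_i = 2$ for all $i \in \{2,\dots,g\}$ except at the $k$ indices $j_1 < \dots < j_k$ where $m_i = 0$. I would analyze what the tableau conditions force. In a column $[(g-k)\times 2]$, condition (1) says $t$ is strictly increasing along each of the two rows and strictly increasing down each of the $g-k$ columns. A counting/pigeonhole argument on the $2(g-k)$ cells mapping into $\{1,\dots,g\}$ shows that many values must be repeated, and each repeated value $v = t(x,1) = t(x',2)$ (necessarily $x' < x$, by the column-increasing condition forcing the value in row $2$ to sit in an earlier column) must satisfy the congruence $x - 1 \equiv x' - 2 \pmod{m_v}$, i.e. $x - x' \equiv -1 \pmod{m_v}$. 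If $m_v = 2$ this forces $x - x'$ odd; the indices with $m_v = 0$ are the "free" spots $j_1,\dots,j_k$ where no constraint is imposed.

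The heart of the argument is then a careful bookkeeping of how the $k$ free indices and the even-gap constraints at the $m_v = 2$ indices interact, to derive a contradiction with there being only $g - k$ columns for $2(g-k)$ cells hitting $g$ values. I expect this to run parallel to the weak H. Martens' Theorem (Proposition 2 of \cite{ref1}) applied at degree $k+1$: that result bounds $w^1_{g-1}$ and more generally constrains which ranks appear, and the $k$ exceptional torsion indices contribute exactly $k$ to the "defect," which is why $k+2$ and not $k+1$ is the gonality. Concretely, I would show that any valid tableau on $[(g-k)\times 2]$ would force the number of distinct values to exceed $g$, or would force two cells in the same position, using that at most $k$ of the "diagonal jumps" can be even.

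The main obstacle will be organizing the combinatorics of the two-row tableau cleanly: one must track, for each value $v$ appearing twice, the pair of columns $(x', x)$, verify the parity constraint from $m_v = 2$ at non-exceptional indices, and sum up the column-displacements to contradict the available width $g - k$. This is a finite but delicate argument; I anticipate it is exactly the "more special case" the author mentions having a short proof, with the general case needing extra care when the exceptional indices $j_i$ cluster near the ends of the chain or when $g - k$ is small relative to $k$. I would first nail the case where the $j_i$ are spread out (so each contributes independently), then handle the boundary complications separately.
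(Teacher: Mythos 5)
Your overall framing is right: reduce via Theorem \ref{theorem1} to the non-existence of an $\underline{m}$-displacement tableau on $[(g-k)\times 2]$, with the upper bound $\gon(\Gamma)\leq k+2$ coming from \cite{ref1}. But there is a fatal misreading at the heart of your plan. You write that the indices with $m_v=0$ are ``free'' spots where no constraint is imposed. Congruence modulo $0$ is \emph{equality}, so $m_v=0$ is the \emph{strongest} possible constraint: if $t(x,1)=t(x',2)=v$ then row/column monotonicity forces $x'<x$, while $x-1=x'-2$ forces $x'=x+1$, a contradiction. Hence a value $j_i$ with $m_{j_i}=0$ can occur \emph{at most once} in any two-row tableau. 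This is the engine of the whole lower bound. Under your reading the $j_i$ would make tableaux \emph{easier} to build, and the argument would collapse: for $k=1$ the hyperelliptic tableau $t(i,j)=i+j-1$ itself would be admissible and the graph would come out hyperelliptic, contradicting the claimed gonality $3$. Your own congruence computation ($x-x'\equiv -1 \bmod m_v$) already shows this if you set $m_v=0$ in it.

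Beyond that, the combinatorial core is not supplied. The paper's proof proceeds by noting (after \cite{ref13}) that every two-row tableau on $[(g-l-1)\times 2]$ is the hyperelliptic tableau on $[(g-1)\times 2]$ with $l$ values deleted from each row; each $j_i$ must then be deleted at least once, and a parity argument using $m_{j_i\pm 1}=2$ shows that if $j_i$ is deleted only once then $j_i-1$ or $j_i+1$ must also be deleted, giving at least $2k$ deletions in total and hence $l\geq k$. When consecutive exceptional indices satisfy $j_{i+1}-j_i=2$ this double-counts shared neighbours, and an additional induction (the Claim in the paper) is needed to still reach the count $2k$. Your proposal replaces all of this with ``careful bookkeeping'' and an appeal to the expectation that it parallels the weak H. Martens' Theorem; neither the deletion count nor the parity mechanism is present, so even after correcting the mod-$0$ issue the proof is not there.
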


In the proof of Proposition \ref{prop1} we are going to find restrictions on the $\underline{m}$-displacement tableaux $t$ of genus $g$ on $[(g-k-1) \times (2)]$.
Using those restrictions, in Section \ref{section5}, we are going to prove the (a priori) stronger statement that $w^1_{k+2}(\Gamma)=0$.
In both proofs we first give the proof in case $j_{i+1}-j_i \geq 3$ for all $1 \leq i \leq k-1$.
In case those inequalities do not hold the proofs become more complicated.

\begin{proof}[Proof of Proposition \ref{prop1}]
As noticed in \cite{ref13}, each $\underline{m}$-displacement tableau of genus $g$ on $[(g-l-1) \times (2)]$ for some integer $l \geq 0$ can be obtained from the hyperelliptic tableau on $[(g-1) \times (2)]$ of genus $g$ by deleting $l$ values in each row.
This hyperelliptic tableau is defined by $t(i,j)=i+j-1$ for $(i,j) \in [(g-1) \times 2]$.
Of course, we have to take care it satisfies the conditions of an $\underline{m}$-tableau.
Since for $1 \leq i \leq k$ we have $m_{j_i}=0$, each value $j_i$ has to be deleted at least once from the hyperelliptic tableau. (In what follows by saying "delete" we mean from the hyperelliptic tableau.)

Assume we delete some $j_i$ only once.
If we would not delete $j_i-1$ neither $j_i+1$ at least once then since $m_{j_i-1}=m_{j_i+1}=2$, writing $t(a,1)=t(a',2)=j_i-1$ we need $a-a' \equiv 1 \mod 2$.
One possibility is $t(a+1,1)=t(a'+2,2)=j_i+1$, the other possibility is $t(a+2,1)=t(a'+1,2)=j_i+1$.
Both possibilities contradict $m_{j_i+1}=2$.
We conclude that $j_i-1$ or $j_i+1$ has to be deleted at least once.

\underline{End of the proof in case $j_{i+1}-j_i \geq 3$ for each $1 \leq i \leq k-1$.}

We obtain that at least $2k$ values have to be deleted.
So we cannot have $l<k$, therefore the gonality of $\Gamma$ is equal to $k+2$.
(Remember, in \cite {ref1} it is already proved that $W^1_{k+2}(\Gamma) \neq \emptyset$.)

\underline{Continuation of the proof in general.}

Now assume there exist $1 \leq i_1 < i_2 \leq k$ such that for $i_1 \leq i < i_2$ we have $j_{i+1}=j_i+2$ and assume for each $i_1 \leq i \leq i_2$ the image of $t$ contains $j_i$.
Let $i_1 \leq i <i_2$.
In the previous argument we can delete the value $j_i+1$ to compensate for retaining one value $j_i$ and one value $j_{i+1}$ while deleting no value $j_i-1$ and no value $j_{i+1}+1$.

Instead we are going to use the following claim.

\begin{claim}
If $i_2-i_1=m$ then at least $m+1$ values belonging to $\{ j_{i_1}-1, j_{i_1}+1,\cdots, j_{i_2}+1 \}$ are deleted.
(Some of those values can be deleted twice.)
\end{claim}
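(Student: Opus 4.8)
I would translate the claim into a counting statement about two deletion sets. Write $D_1 \subseteq \{1,\dots,g-1\}$ and $D_2 \subseteq \{2,\dots,g\}$ for the sets of values deleted from the first and second row of the hyperelliptic tableau, and put $E = \{j_{i_1}-1,\, j_{i_1}+1,\, j_{i_1}+3,\, \dots,\, j_{i_2}+1\}$, the $(m+2)$-element set of ``odd-offset'' cycles surrounding the cluster $j_{i_1} < j_{i_1}+2 < \cdots < j_{i_2}$; note that consecutive members of $E$ differ by $2$ and that the unique value between them is one of the $j_i$. The claim asserts exactly that $\sum_{e \in E}\bigl([e \in D_1] + [e \in D_2]\bigr) \geq m+1$, where the bracket is $1$ when the condition holds and $0$ otherwise (so a value deleted from both rows is counted twice, as in the statement). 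I would first record two structural facts. First, since $m_{j_i}=0$ and, by hypothesis, $j_i$ lies in the image of the resulting tableau, $j_i$ survives in exactly one row: it cannot survive in both, because condition (2) with modulus $0$ would force its two occurrences into adjacent columns, contradicting the strictly increasing condition read down those two columns (the column just below the row-$1$ occurrence would have to contain an entry both $>j_i$ and $<j_i$). Hence every $j_i$ with $i_1 \leq i \leq i_2$ is deleted from exactly one of the two rows. Second, writing $d_a(c)$ for the number of elements of $D_a$ strictly below $c$ and $\delta(c) = d_1(c) - d_2(c)$, a value $c$ that survives in both rows occupies columns differing by $1-\delta(c)$, so condition (2) reads $\delta(c) \equiv 2 \pmod{m_c}$; in particular an element of $E$ surviving in both rows has $\delta$ \emph{even} there.

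Next I would run a parity argument across the cluster. Since $\delta(c+1)-\delta(c) = [c \in D_1] - [c \in D_2]$, the parity of $\delta$ flips precisely when $c$ is deleted from exactly one row; by the first fact it flips at every $j_i$ of the cluster, and at a member $e$ of $E$ it flips iff $e$ is deleted from exactly one of the two rows (not from both, and not from neither). Now compare the parities of $\delta$ at any two members $e < e'$ of $E$ that both survive in both rows: both values of $\delta$ are even, so the number of sign flips between them is even, while this number equals (the number of $j_i$ strictly between $e$ and $e'$) plus (the number of members of $E$ strictly between $e$ and $e'$ that are deleted from exactly one row). Since there is exactly one $j_i$ between each consecutive pair of members of $E$, a short mod-$2$ count gives that strictly between $e$ and $e'$ there is an \emph{odd} number — in particular at least one — of members of $E$ that are deleted from \emph{both} rows; in the boundary case where $e,e'$ are consecutive in $E$ this forces the empty parity sum to be odd, a contradiction, so two doubly-surviving members of $E$ can never be consecutive in $E$.

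Finally I would close by counting multiplicities. Suppose $t$ of the $m+2$ members of $E$ survive in both rows; the remaining $m+2-t$ are each deleted at least once, and the previous paragraph exhibits at least $t-1$ members of $E$ (one inside each of the $t-1$ gaps between consecutive doubly-surviving members) that are deleted from both rows, hence deleted twice. Therefore the total number of deletions among $E$, counted with multiplicity, is at least $2(t-1) + \bigl((m+2-t)-(t-1)\bigr) = m+1$, and the cases $t \leq 1$ are immediate since then at least $m+1$ members of $E$ are deleted at least once. This proves the claim. I expect the only delicate point to be the bookkeeping of column positions: getting the formula $\delta(c)\equiv 2 \pmod{m_c}$ exactly right from the deletion process, and checking that the interleaving ``exactly one $j_i$ between consecutive members of $E$'' is what makes the parity bound come out as $m+1$ rather than something weaker; the remaining steps are a routine count on $[(g-k-1)\times(2)]$.
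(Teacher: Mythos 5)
Your proof is correct, but it takes a genuinely different route from the paper's. The paper proves the claim by induction on $m$: it applies the inductive hypothesis to the two sub-clusters $\{j_{i_1},\dots,j_{i_2-1}\}$ and $\{j_{i_1+1},\dots,j_{i_2}\}$, reduces to the extremal case in which exactly $m$ values are deleted from each of the corresponding sets while neither $j_{i_1}-1$ nor $j_{i_2}+1$ is deleted, and then derives a parity contradiction between the column offsets $a_1-a_1'$ and $a_2-a_2'$ at the two endpoints. Your argument replaces the induction by one global count built on the column-shift function $\delta(c)=d_1(c)-d_2(c)$: the congruence $\delta(c)\equiv 2\pmod{m_c}$ for values surviving in both rows, the fact that the parity of $\delta$ flips exactly at values deleted from one row (hence at every $j_i$ of the cluster, by your correct observation that $m_{j_i}=0$ forbids a double occurrence), and the conclusion that between consecutive doubly-surviving members of $E$ there must sit a doubly-deleted one, which yields $\ge (m+2-t)+(t-1)=m+1$ deletions. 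The underlying parity mechanism is the same --- your $\delta$ is precisely the quantity the paper tracks through the differences $a-a'$ --- but your version is non-inductive and, as a bonus, already isolates the finer alternation structure of doubly-deleted versus non-deleted members of $E$ that the paper only extracts afterwards in Lemmas \ref{lemma8}, \ref{lemma9} and \ref{lemma10}. One small imprecision: your intermediate assertion that \emph{any} two doubly-surviving members of $E$ have an odd number of doubly-deleted members of $E$ strictly between them is only valid when no doubly-surviving member lies strictly between them; in general the parity count gives that the number of doubly-surviving plus doubly-deleted members in between is odd. Since your final count only invokes consecutive doubly-surviving pairs, where this number of in-between doubly-surviving members is zero, the proof is unaffected.
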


This implies we need to delete at least $2k$ values from the hyperelliptic graph of genus $g$, hence $l<k$ cannot occur.
So also in the general case we obtain $\Gamma$ has gonality $k+2$.
\end{proof}

\begin{proof}[Proof of the claim]

First we assume $m=1$.
Assume $j_{i_1}+1$ is not deleted.
Then to compensate the fact that $j_{i_1}$ and $j_{i_2}$ occur in $t$ we find $j_{i_1}-1$ and $j_{i_2}+1$ have to be deleted once, proving the claim in that case.
Assume $j_i+1$ is deleted exactly once and assume neither $j_{i_1}-1$ nor $j_{i_2}+1$ are deleted.
Assume $t(a_1,1)=t(a'_1,2)=j_{i_1}-1$.
Since $m_{j_{i_1}-1}=2$ we have $a_1-a'_1 \equiv 1 \mod {2}$.
Let $t(a_2,1)=t(a'_2,2)=j_{i_2}+1$.
Since $m_{j_{i_2}+1}=2$ we need also $a_2-a'_2 \equiv 1 \mod {2}$.
But the three values $j_{i_1}$, $j_{i_1}+1$ and $j_{i_2}$ are the only values occuring between $j_{i_1}-1$ and $j_{i_2}+1$ and each of them occurs exactly once. 
This implies $a_1-a'_1$ and $a_2-a'_2$ cannot have the same parity, giving a contradiction.
We obtain that one of the values belonging to $\{j_{i_1}-1, j_{i_2}+1 \}$ is deleted too.
We conclude that at least two values belonging to $\{ j_{i_1}-1, j_{i_1}+1, j_{i_2}+1 \}$ are deleted.

Now we assume $m\geq 2$ and the claim holds for $m-1$ (induction hypothesis).
From the induction hypothesis on $\{ j_{i_1}, \cdots , j_{i_2-1} \}$  it follows at least $m$ values of $\{ j_{i_1}-1, j_{i_1}+1, \cdots , j_{i_2}-1 \}$ are deleted.
In case more than $m$ values of $\{ j_{i_1}-1, j_{i_1}+1, \cdots , j_{i_2}-1 \}$ are deleted then the statement holds for $\{ j_{i_1} \cdots , j_{i_2} \}$ so we can assume exactly $m$ values of $\{ j_{i_1}-1, j_{i_1}+1, \cdots j_{i_2}-1 \}$ are deleted.
Assume no value $j_{i_2}+1$ is deleted.
From the induction hypothesis we know at least $m$ values from $\{ j_{i_1}+1, \cdots , j_{i_2}+1 \}$ are deleted.
Combining the previous facts we find that exactly $m$ values of $\{ j_{i_1}+1, \cdots, j_{i_2}-1 \}$ are deleted, but no value $j_{i_1}-1$ and $j_{i_2}+1$ is deleted.
We have $t(a_1,1)=t(a'_1,2)=j_{i_1}-1$ with $a_1-a'_1 \equiv 1 \mod {2}$ and $t(a_2,1)=t(a'_2,2)=j_{i_2}+1$ with $a_2-a'_2 \equiv 1 \mod {2}$.
However between $j_{i_1}-1$ and $j_{i_2}+1$  the $m+1$ values $j_k$ with $i_1 \leq k \leq i_2$ occur exactly once in $t$ and there are exactly $m$ values from $\{ j_{i_1}+1, \cdots ,j_{i_2}-1 \}$ in the tableau $t$.
So $a_1-a'_1$ and $a_2-a'_2$ cannot have the same parity.
This proves in this case some value $j_{i_2}+1$ has to be deleted.
This finishes this induction proof.

\end{proof}

\begin{remark}

1) If one is only interested in showing that the weak H. Martens' Theorem for chains of cycles (see \cite{ref1}, Proposition 2) is sharp, it is enough to restrict to Martens-special chains of cycles satisfying $j_{i+1}-j_i \geq 3$ for $1 \leq i \leq k-1$.
Then one can skip the more detailled arguments from the proof.

2) In the proof, in case $1 \leq i \leq k$, we only use that $j_i$ cannot occur twice in an $\underline{m}$-displacement tableau on $[(g-l+1) \times 2]$.
This follows from the assumption $m_{j_i}=0$.
However it would also follow from the condition $m_{j_i}> \min \{2j_i-2, 2g-2j_i \}$.
In particular it would also follow from $m_{j_i} \geq 2g$ for all $1 \leq i \leq k$. 

\end{remark}

Further on we use the following lemma that follows from the proof of Proposition \ref{prop1}.
\begin{lemma}\label{lemma7+}
If $t$ is an $\underline{m}$-displacement tableau on $[(g-k-1) \times 2]$ with $\underline{m}$ being the torsion profile of a Martens-special chain of cycles of genus $g$ and of type $k$ then $1, g \in \im t$ (hence $t(1,1)=1$ and $t(g-k-1,2)=g$).
\end{lemma}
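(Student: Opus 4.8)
The plan is to deduce Lemma~\ref{lemma7+} directly from the structure of the deletion argument used to prove Proposition~\ref{prop1}. Recall that any $\underline{m}$-displacement tableau $t$ of genus $g$ on $[(g-k-1)\times 2]$ is obtained from the hyperelliptic tableau $t_0$ on $[(g-1)\times 2]$, defined by $t_0(i,j)=i+j-1$, by deleting exactly $k$ values from each of the two rows. In that proof we showed that at least $2k$ values must be deleted (counting multiplicity); since only $k$ deletions per row are available and $k+k=2k$, the counting is tight, and I want to squeeze the consequence $1,g\in\im t$ out of this tightness.

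First I would record that $1$ and $g$ appear exactly once each in $t_0$: indeed $t_0(i,j)=1$ forces $i=j=1$, and $t_0(i,j)=g$ forces $i=g-1,j=2$. So the only way to lose $1$ from the image is to delete the single entry $(1,1)$, and the only way to lose $g$ is to delete $(g-1,2)$. Suppose, for contradiction, that $1\notin\im t$, i.e.\ the value $1$ has been deleted. Then the value $2$, which also plays the role of a "$j_i\pm1$" neighbour structurally (here $m_2=2$ unless $2=j_1$, and in a Martens-special chain $j_1\ge 3$, so $m_2=2$ always), must still be handled: more to the point, deleting the value $1$ costs us one deletion that contributes nothing to the $2k$ lower bound coming from the $j_i$'s and their neighbours, because $1\le j_1-1$ strictly (as $j_1\ge3$ gives $j_1-1\ge2>1$). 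Hence we would need $2k+1$ deletions in total, exceeding the available $2k$ — contradiction. The symmetric argument at the top end, using $j_k\le g-2$ so that $g>j_k+1$, shows $g\in\im t$ as well. Then $t(1,1)=1$ and $t(g-k-1,2)=g$ follow because $t$ is strictly increasing along rows and columns, so the smallest value $1$ can only sit in position $(1,1)$ and the largest value $g$ only in $(g-k-1,2)$.

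A cleaner way to phrase the same point, which I would likely adopt to avoid re-examining parity cases, is to invoke the Claim from the proof of Proposition~\ref{prop1} together with the fact that the blocks $\{j_{i_1}-1,\dots,j_{i_2}+1\}$ used there are pairwise disjoint and all lie strictly inside $\{2,\dots,g-1\}$. The Claim already forces a total of at least $2k$ deletions among values in $\bigcup\{j_{i_1}-1,\dots,j_{i_2}+1\}\subseteq\{2,\dots,g-1\}$; since the total number of deletions is exactly $2k$, no deletion can occur outside that union, and in particular the values $1$ and $g$ are never deleted. This is the whole content of the lemma.

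The main obstacle is purely bookkeeping: making sure the disjoint-blocks decomposition of $\{j_1,\dots,j_k\}$ into maximal runs with consecutive gaps $2$ really does produce intervals $\{j_{i_1}-1,\dots,j_{i_2}+1\}$ that are disjoint and contained in $\{2,\dots,g-1\}$, and that the Claim's count of $\ge m+1$ deletions per block of size $m$ sums to exactly $2k$ across all blocks (a block that is a single $j_i$ with both neighbours $j_i\pm1$ of torsion $2$ being the case $m=0$, contributing $2$ deletions, matching the "at least $2k$" already proved). Once that combinatorial tally is pinned down, the conclusion $1,g\in\im t$ — and hence $t(1,1)=1$, $t(g-k-1,2)=g$ by monotonicity — is immediate. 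I expect this to be a short paragraph of careful indexing rather than a genuine difficulty.
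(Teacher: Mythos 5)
Your argument is correct and is essentially the paper's own proof: the tightness of the $2k$-deletion count from Proposition~\ref{prop1} and its Claim forces every deleted value to lie in $\bigcup_{i'}\{j_{i'}-1,j_{i'},j_{i'}+1\}\subseteq\{2,\dots,g-1\}$ (using $j_1\geq 3$ and $j_k\leq g-2$), so $1$ and $g$ survive, and monotonicity places them at $(1,1)$ and $(g-k-1,2)$. No changes needed.
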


\begin{proof}
From the proof of Propostion \ref{prop1} it follows that, if some $i \notin \{ j_1, \cdots, j_k \}$ is deleted then $i=j_{i'}-1$ or $i=j_{i'}+1$ for some $1 \leq i' \leq k$.
However from the definition of a Martens-special chain of cycles, it follows $j_1 \geq 3$ and $j_k \leq g-2$.
Therefore 0 and $g$ are not deleted.
\end{proof}

In order to prove Theorem A we need to make some further restrictions on the $\underline{m}$-displacement tableaux $t$ on $[(g-k-1) \times 2]$ related to the discussion in the continuation of the proof in general of Proposition \ref{prop1}.
In particular these consideration are not needed for the proof of Theorem A in case $j_{i+1}-j_i \geq 3 $ for $1 \leq i < k$.

In the next lemmas we assume $1 \leq i_1 <  i_2 \leq k$ is as in the continuation of the proof in general of Proposition \ref{prop1} and assume the set $\{ i_1, \cdots , i_2 \}$ is maximal of that kind.
\begin{lemma}\label{lemma7}
Exactly $i_2-i_1+1$ values from $\{ j_{i_1}-1, \cdots , j_{i_2}+1 \}$ are deleted (again, possibly some of them twice).
\end{lemma}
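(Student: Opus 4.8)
The plan is to deduce the lemma directly from the Claim proved inside Proposition \ref{prop1}, together with a counting argument over the full set of deleted values. First I would recall the setup: $\{i_1,\dots,i_2\}$ is a \emph{maximal} run with $j_{i+1}=j_i+2$ for $i_1\le i<i_2$, and throughout we are looking at an $\underline{m}$-displacement tableau $t$ on $[(g-k-1)\times 2]$, which by the discussion at the start of the proof of Proposition \ref{prop1} is obtained from the hyperelliptic tableau on $[(g-1)\times 2]$ by deleting exactly $k-1$ values from each row, i.e.\ $k-1$ values total counted with the convention that a value deleted from both rows counts twice, so in total $2(k-1)$... wait, more carefully: $g-k-1 = (g-1)-(k)$, so actually $k$ values are deleted from each row. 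I would state precisely: $l=k$ in the notation of that proof, so $2k$ deletions occur counting multiplicity, and by the Claim at least $2k$ deletions are forced, hence exactly $2k$ deletions occur. The inequality is therefore an equality everywhere it was applied.

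The key step is then the following accounting. Writing $[1,g]$ as a disjoint union of the "blocks" $B=\{j_{i_1}-1,\dots,j_{i_2}+1\}$ coming from the maximal runs (singletons $\{j_i-1,j_i,j_i+1\}$ when the run has length one), together with the leftover values, I would show the Claim gives, block by block, a lower bound on the number of deletions in that block; for the block $B$ attached to the run $\{i_1,\dots,i_2\}$ of length $m=i_2-i_1$ the bound is $m+1=i_2-i_1+1$. Similarly each $j_i$ with $i$ outside any run forces (by the single-deletion analysis: $j_i$ itself once, plus one of $j_i\pm1$) at least $2$ deletions from its associated triple. Summing these lower bounds over all blocks one recovers exactly $2k$ — here the arithmetic is that the run of length $m$ contributes $m+1$ deletions while "using up" the indices $i_1,\dots,i_2$, i.e.\ $m+1$ of the $j_i$'s, so per index it is "$1$" plus one extra per run, and the extras total exactly the number of runs; combined with a separate count one checks the grand total is forced to be $2k$ precisely when every block meets its lower bound with equality. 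Since the global total is exactly $2k$, no block can exceed its lower bound, so the block $B$ has exactly $i_2-i_1+1$ deleted values, which is the assertion. One must be a little careful that the blocks are genuinely disjoint: consecutive maximal runs are separated by a gap $j_{i+1}-j_i\ge 3$, and $j_1\ge 3$, $j_k\le g-2$ by Definition \ref{def5}, so the values $j_{i_1}-1$ and $j_{i_2}+1$ do not collide with neighbouring blocks nor fall outside $[1,g]$ — this is exactly the content already extracted in Lemma \ref{lemma7+}.

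The main obstacle I expect is the bookkeeping of multiplicities: a value deleted from both rows of the hyperelliptic tableau must be counted twice both in the global total $2k$ and in the per-block lower bounds of the Claim, and one has to be sure the Claim's statement ("at least $m+1$ values are deleted, some possibly twice") is being used consistently — i.e.\ whether "$m+1$ values" means $m+1$ \emph{distinct} values or $m+1$ deletions with multiplicity. Reading the Claim's proof, it produces $m+1$ distinct elements of $\{j_{i_1}-1,\dots,j_{i_2}+1\}$ each deleted at least once, so the clean way to phrase the counting is: let $d$ be the number of deletions with multiplicity and $d'$ the number of distinct deleted values; the proof of Proposition \ref{prop1} shows $d'\ge 2k$ region by region, while the hyperelliptic-tableau description gives $d=2k\ge d'$, forcing $d'=2k$ and equality in every regional bound, which is the lemma. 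Once this distinction is pinned down the rest is a short disjoint-union count, so I would keep the exposition to essentially one paragraph after quoting the Claim.
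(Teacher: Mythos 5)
Your argument is the paper's own: the Claim gives the lower bound $i_2-i_1+1$ per block, and since passing from $[(g-1)\times 2]$ to $[(g-k-1)\times 2]$ deletes exactly $2k$ values counted with multiplicity while the block-by-block lower bounds already sum to $2k$ (the blocks being disjoint because consecutive maximal runs are separated by gaps $\geq 3$ and $j_1\geq 3$, $j_k\leq g-2$), every block must meet its bound with equality. The one slip is in your final paragraph: the Claim, like Lemma~\ref{lemma7} itself, counts deletions \emph{with} multiplicity (its parenthetical ``some of those values can be deleted twice'' signals exactly this, and its proof covers the case where $j_{i_1}+1$ is deleted twice as already contributing $2$), so the clean bookkeeping uses your $d$ throughout; your proposed chain $d'\geq 2k$ and $d=2k\geq d'$ would force $d'=d$, i.e.\ that no value is ever deleted twice, which is false and contradicted by Lemmas~\ref{lemma8}--\ref{lemma10}. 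With $d$ in place of $d'$ your accounting is exactly the intended (two-line) proof.
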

\begin{proof}
From the claim in the proof of Proposition \ref{prop1} we know at least $i_2-i_1+1$ values are deleted.
If it would be more one cannot obtain a tableau on $[(g-k-1) \times 2]$ any more because too many values are deleted.
\end{proof}

\begin{lemma}\label{lemma8}
The values $j_{i_1}-1$ and $j_{i_2}+1$ cannot be deleted twice.
\end{lemma}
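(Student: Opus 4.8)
The plan is to argue by contradiction using the parity constraints already exploited in the proof of the Claim in Proposition \ref{prop1}. Suppose, say, that $j_{i_1}-1$ is deleted twice. By Lemma \ref{lemma7}, exactly $i_2-i_1+1$ values from $\{j_{i_1}-1,\dots,j_{i_2}+1\}$ are deleted (counted with multiplicity). Since $j_{i_1}-1$ already accounts for two of these deletions, only $i_2-i_1-1$ further values from $\{j_{i_1}+1,\dots,j_{i_2}+1\}$ can be deleted. But recall from the induction in the proof of the Claim that, applying the Claim to the maximal sub-run $\{i_1,\dots,i_2\}$ and then to the run $\{i_1+1,\dots,i_2\}$ (of size $i_2-i_1-1$), at least $i_2-i_1$ values from $\{j_{i_1+1}-1,\dots,j_{i_2}+1\}=\{j_{i_1}+1,\dots,j_{i_2}+1\}$ must be deleted. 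This already gives $i_2-i_1 > i_2-i_1-1$, a contradiction. The symmetric argument (reversing the roles of $i_1$ and $i_2$, using that the configuration is symmetric under $i \leftrightarrow g+1-i$ in the relevant range) handles the case where $j_{i_2}+1$ is deleted twice.

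The key steps, in order, are: first, record the constraint from Lemma \ref{lemma7} that the total number of deletions (with multiplicity) in $\{j_{i_1}-1,\dots,j_{i_2}+1\}$ is exactly $i_2-i_1+1$; second, observe that a double deletion of an endpoint value ($j_{i_1}-1$ or $j_{i_2}+1$) forces the remaining deletions among the interior-plus-other-endpoint values to number only $i_2-i_1-1$; third, invoke the Claim (applied to the appropriate sub-run of length one less) to show that at least $i_2-i_1$ deletions are nevertheless required among those values, via the parity obstruction on $a-a'$ for $t(a,1)=t(a',2)=j_{i_2}+1$ (resp. $j_{i_1}-1$); fourth, derive the numerical contradiction $i_2-i_1 \leq i_2-i_1-1$.

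The main obstacle I anticipate is making the bookkeeping in the second and third steps fully rigorous: one must be careful that the Claim, as proved, genuinely yields at least $i_2-i_1$ deletions among $\{j_{i_1}+1,\dots,j_{i_2}+1\}$ even after we have spent two deletions on $j_{i_1}-1$ — i.e. that the double deletion of the endpoint does not help satisfy the parity condition for $j_{i_2}+1$. In fact, the parity argument in the Claim only ever uses that each value $j_k$ ($i_1 \le k \le i_2$) appears exactly once in $t$ and counts the values of $\{j_{i_1}+1,\dots,j_{i_2}-1\}$ present in $t$; deleting $j_{i_1}-1$ twice does not change those counts, so the obstruction persists and the contradiction stands. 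One should also check the degenerate case $i_2-i_1=1$ separately: there a double deletion of $j_{i_1}-1$ uses up both allowed deletions, leaving $j_{i_1}+1$ and $j_{i_2}+1$ both retained, but then the parity argument with $t(a_2,1)=t(a'_2,2)=j_{i_2}+1$ (only the three values $j_{i_1}, j_{i_1}+1, j_{i_2}$ occurring in between, each once) forces a contradiction exactly as in the base case of the Claim.
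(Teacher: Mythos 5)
Your proof is correct and is essentially the paper's own argument: assume $j_{i_1}-1$ is deleted twice, use Lemma \ref{lemma7} to conclude that only $i_2-i_1-1$ further deletions remain for $\{j_{i_1}+1,\dots,j_{i_2}+1\}$, and then apply the Claim to the sub-run $\{i_1+1,\dots,i_2\}$ to force at least $i_2-i_1$ deletions there, a contradiction; the case of $j_{i_2}+1$ is symmetric. Your additional remarks on the degenerate case $i_2-i_1=1$ and on why the parity obstruction is unaffected by the double deletion of the endpoint are sensible safeguards but do not change the route, which the paper states more tersely.
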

\begin{proof}
If $j_{i_1}-1$ is deleted twice then because of Lemma \ref{lemma7} only $i_2-i_1-1$ values from $\{ j_{i_1}+1, \cdots , j_{i_2}+1 \}$ are deleted, contradicting the claim in the proof of Proposition \ref{prop1} for $\{j_{i_1+1}, \cdots , j_{i_2} \}$.
The argument in case $j_{i_2}+1$ is deleted twice is similar.
\end{proof}

\begin{lemma}\label{lemma9}
Assume $i_1 \leq l <i_2$ such that $j_l+1$ is deleted twice.
Then $j_l-1$ and $j_{l+1}+1$ are not deleted twice.
\end{lemma}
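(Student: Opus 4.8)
The plan is to argue by contradiction, mirroring the parity arguments already used in the proof of the claim in Proposition \ref{prop1} and in Lemma \ref{lemma8}. Suppose first that $j_l-1$ is deleted twice. Combined with the hypothesis that $j_l+1$ is deleted twice, Lemma \ref{lemma7} forces a budget constraint: exactly $i_2-i_1+1$ values from $\{j_{i_1}-1,\dots,j_{i_2}+1\}$ are deleted. The two values $j_l-1$ and $j_l+1$ together account (with multiplicity) for $4$ deletions, so at most $i_2-i_1-3$ of the remaining values in $\{j_{i_1}-1,\dots,j_{i_2}+1\}\setminus\{j_l-1,j_l+1\}$ can be deleted. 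First I would note that $\{j_{i_1},\dots,j_l\}$ and $\{j_{l+1},\dots,j_{i_2}\}$ each induce a ``consecutive block'' situation of the same kind, so the claim in the proof of Proposition \ref{prop1} applies to each; I would count the deletions these two sub-blocks require from the relevant sets and show that together they demand more than the available budget, giving the contradiction. (Since $j_l-1$ is interior to the first sub-block's relevant set and $j_l+1$ interior to the second's, deleting each of them twice wastes one deletion each beyond what is already counted, which is exactly what overruns the budget.)

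For the case where $j_{l+1}+1$ is deleted twice, the argument is symmetric in exactly the same way: $j_{l+1}+1$ plays for the block $\{j_{l+1},\dots,j_{i_2}\}$ the role that $j_l+1$ plays on the left, so the same counting overrun applies. Alternatively, and perhaps more cleanly, I would use the parity obstruction directly: writing $t(a_1,1)=t(a_1',2)=j_{i_1}-1$ (if that value is deleted — and by Lemma \ref{lemma8} it is deleted at most once, so if deleted it is deleted once) we have $a_1-a_1'\equiv 1\bmod 2$, and similarly for $j_{i_2}+1$; then I would track how many values strictly between these two endpoints occur exactly once versus not at all in $t$, using that $j_l+1$ deleted twice means $j_l+1\notin\im t$, and derive that the two differences are forced to have the same parity, contradicting the two congruences. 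This is the same mechanism as in the inductive step of the claim.

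The main obstacle I anticipate is bookkeeping: keeping straight, across the two sub-blocks meeting at index $l$, exactly which values lie in $\{j_{i_1}-1,\dots,j_l-1\}$ versus $\{j_{l+1}+1,\dots,j_{i_2}+1\}$ (the value $j_l+1$ and its neighbours sit at the seam and are easy to double-count or miss), and confirming that the total forced number of deletions strictly exceeds $i_2-i_1+1$. A secondary subtlety is that I must invoke Lemma \ref{lemma8} to rule out $j_{i_1}-1$ or $j_{i_2}+1$ being deleted twice, since otherwise the budget arithmetic changes; but that lemma is already available. Once the counting is set up carefully, the contradiction with Lemma \ref{lemma7} is immediate, so no genuinely new idea beyond the parity-and-counting toolkit of the Proposition \ref{prop1} proof is needed.
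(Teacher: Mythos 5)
Your overall strategy --- contradict the exact deletion budget of Lemma \ref{lemma7} by applying the claim from the proof of Proposition \ref{prop1} to sub-blocks on either side of the seam, with Lemma \ref{lemma8} covering the edge cases $l=i_1$ and $l+1=i_2$ --- is the same as the paper's, and it does work. But the one quantitative step you defer to ``bookkeeping'' is exactly where the plan as written does not close. Applying the claim to the two full halves $\{j_{i_1},\dots,j_l\}$ and $\{j_{l+1},\dots,j_{i_2}\}$ forces at least $(l-i_1+1)+(i_2-l)$ deletions in the union of their boundary sets; but those sets overlap in $j_l+1$, which is deleted twice and hence absorbs two of these required deletions (once for each half), so the net lower bound is only $i_2-i_1-1$, below the budget $i_2-i_1+1$. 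Similarly, your parenthetical principle that an interior value deleted twice ``wastes one deletion beyond what is already counted'' is not a consequence of the claim as stated (the claim counts deletions with multiplicity): a single interior double deletion --- $j_l+1$ itself --- is perfectly consistent with the budget.

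The accounting that does work, and is what the paper carries out, is to split at the doubly-deleted value: write $\{j_{i_1}-1,\dots,j_{i_2}+1\}$ as $A'\cup\{j_l+1\}\cup B'$ with $A'=\{j_{i_1}-1,\dots,j_l-1\}$ and $B'=\{j_{l+1}+1,\dots,j_{i_2}+1\}$. The claim applied to $\{j_{i_1},\dots,j_{l-1}\}$ and to $\{j_{l+2},\dots,j_{i_2}\}$ gives at least $l-i_1$ deletions in $A'$ and at least $i_2-l-1$ in $B'$; since $2+(l-i_1)+(i_2-l-1)$ already equals the budget $i_2-i_1+1$ of Lemma \ref{lemma7}, these counts are \emph{exact}. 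If now $j_l-1$ were also deleted twice, only $l-i_1-2$ deletions would remain for $\{j_{i_1}-1,\dots,j_{l-1}-1\}$, contradicting the claim applied to $\{j_{i_1},\dots,j_{l-2}\}$, which demands $l-i_1-1$; the case of $j_{l+1}+1$ is symmetric. So the repair stays entirely within your toolkit, but it consists precisely in choosing sub-blocks that exclude the indices adjacent to the doubly-deleted values rather than the two full halves; that choice is the actual content of the lemma, and your alternative parity argument would run into the same counting issue, since it too needs to know exactly how many intermediate values occur once versus not at all.
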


\begin{proof}
In case $l=i_1$ or $l=i_2$ then this follows from Lemma \ref{lemma8}.
So assume $i_1+1 \leq l <i_2-1$.
From the claim in the proof of Proposition \ref{prop1} we know $l-i_1+x$ ($x \geq 0$) values of $\{ j_{i_1}-1 , \cdots , j_l-1 \}$ and $i_2-l-1+y$ ($y\geq 0$) values of $\{ j_{l+1}+1, \cdots , j_{i_2}+1 \}$ are deleted.
Since $(l-i_1+x)+(i_2-l-1+y)+2$ has to be equal to $i_2-i_1+1$ we find $x=y=0$.
This implies $j_l-1$ (resp. $j_{l+1}+1$) cannot be deleted twice because otherwise only $l-i_1-2$ values of $\{j_{i_1}-1, \cdots , j_{l-1}-1 \}$ (resp. $i_2 -l-3$ values of $\{ j_{l+2}+1, \cdots , j_{i_2}+1 \}$) are deleted, contradicting the claim in the proof of Proposition \ref{prop1}.
\end{proof}

 Let $i_1 \leq i'_1 < i'_2  < \cdots < i'_n < i_2$ be defined by $j_{i'}+1$ is deleted twice for some $i_1 \leq i' \leq i_2$ if and only if $i'=i'_l$ for some $1 \leq l \leq n$.
 From the proof of the previous lemma we obtain that exactly $i'_1-i_1$ values of $\{ j_{i_1}-1 , \cdots , j_{i'_1}-1 \}$; exactly $i'_l-i'_{l-1}-2$ values of $\{ j_{i'_{l-1}+1}+1, \cdots , j_{i'_l}-1 \}$ for $2 \leq l \leq n$ and exactly $i_2-i'_n-1$ values of $\{j_{i'_n+1}+1, \cdots , j_{i_2}+1 \}$ are deleted.
Therefore in each of those sets at least one element is not deleted at all.
Since no other element of those sets is deleted twice it follows in each set this element is unique.

We obtain the following result.
\begin{lemma}\label{lemma10}
\begin{enumerate}
\item Let $i_1 \leq i'_1 <i'_2 \leq i_2-1$ be such that $j_{i'_1}+1$ and $j_{i'_2}+1$ both do not occur in $t$ and each value $j_{i'_1}+1<i<j_{i'_2}+1$ occurs in $t$ then there is a unique $i'_1<i'<i'_2$ such that $j_{i'}+1$ occurs twice in $t$.
\item Let $i_1 \leq i'_1 < i'_2 \leq i_2+1$ be such that $j_{i'_1}-1$ and $j_{i'_2}-1$ both occur twice in $t$ and no value $j_{i'_1}-1<i<j_{i'_2}-1$ occurs twice in $t$ then there is a unique $i'_1 < i' < i'_2$ such that $j_{i'}-1$ does not occur in $t$.
\item Let $i_1 \leq i' \leq i_2-1$ be such that $j_{i'}+1$ does not occur in $t$ and each value $j_{i_1}-1\leq i \leq j_{i'}$ (resp. $j_{i'+1}\leq i \leq j_{i_2}+1$) occurs in $t$ then there is a unique $i_1 \leq i'' \leq i'$ (resp. $i'+1 \leq i'' \leq i_2$) such that $j_{i''}-1$ (resp. $j_{i''}+1$) occurs twice in $t$.
\end{enumerate}
\end{lemma}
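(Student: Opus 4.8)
The plan is to read all three statements off the description of the deletion pattern obtained in the paragraph preceding the lemma, using throughout the identification of the value $j_i-1$ with $j_{i-1}+1$ (these coincide since $j_i-j_{i-1}=2$ inside the chain $\{i_1,\dots,i_2\}$), so that $\{j_{i_1}-1,\dots,j_{i_2}+1\}$ is just the set of ``$\pm1$-values'' $j_{i_1}-1,\ j_{i_1}+1,\ j_{i_1+1}+1,\ \dots,\ j_{i_2}+1$. For such a value, ``does not occur in $t$'' means it is deleted twice and ``occurs twice in $t$'' means it is deleted zero times; I also use that no $j_i$ can occur twice (because $m_{j_i}=0$), so every $\pm1$-value lying strictly between $j_{i_1}-1$ and $j_{i_2}+1$ is of the form $j_{i'}+1=j_{i'+1}-1$.

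Let $q_1<q_2<\dots<q_n$ be the $\pm1$-values deleted twice; by Lemma~\ref{lemma8} they all lie among $j_{i_1}+1,\dots,j_{i_2-1}+1$, and by Lemma~\ref{lemma9} consecutive ones are separated by at least one further $\pm1$-value. The paragraph preceding the lemma shows that removing $q_1,\dots,q_n$ splits the remaining $\pm1$-values into $n+1$ nonempty consecutive blocks --- $I_0$ of those below $q_1$, $I_l$ of those strictly between $q_l$ and $q_{l+1}$ for $1\le l\le n-1$, and $I_n$ of those above $q_n$ --- and that each $I_l$ contains exactly one value $s_l$ that is not deleted, this being the unique such value in $I_l$ since no element of $I_l$ is deleted twice. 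Hence the $\pm1$-values occurring twice in $t$ are exactly $s_0,\dots,s_n$, and they interlace the $q_l$: $\ s_0<q_1<s_1<q_2<\dots<q_n<s_n$. (If $n=0$ there is a single block and a single survivor $s_0$, and parts (1)--(3) are vacuous.)

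Parts (1) and (3) are then immediate. In (1), the hypothesis that $j_{i'_1}+1$ and $j_{i'_2}+1$ are deleted twice while nothing strictly between them is deleted twice says precisely that $j_{i'_1}+1=q_l$ and $j_{i'_2}+1=q_{l+1}$ for some $1\le l\le n-1$; then $s_l\in I_l$ is a $\pm1$-value strictly between $j_{i_1}-1$ and $j_{i_2}+1$ (using Lemma~\ref{lemma8} for the endpoints), hence equals $j_{i'}+1$ for a unique $i'$ with $i'_1<i'<i'_2$, and it is the unique such value occurring twice since any other would again lie in $I_l$. In (3), the hypothesis that every value up to $j_{i'}$ (resp.\ from $j_{i'+1}$ on) occurs in $t$ says $j_{i'}+1=q_1$ (resp.\ $j_{i'}+1=q_n$), so $I_0$ (resp.\ $I_n$) is exactly the set of $\pm1$-values $j_{i''}-1$ with $i_1\le i''\le i'$ (resp.\ $j_{i''}+1$ with $i'+1\le i''\le i_2$), and its unique survivor $s_0$ (resp.\ $s_n$) gives the required unique index.

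Part (2) is the ``dual'' direction and is the only place a genuinely new---but very short---argument is needed; I expect the bookkeeping here (and in the degenerate boundary cases, where $I_0$ or $I_n$ may be a single value) to be the only mild difficulty, with no real obstacle. Since $j_{i'_1}-1$ and $j_{i'_2}-1$ occur twice, each is one of the survivors, say $s_a$ and $s_b$ with $a<b$; if $b\ge a+2$ then $s_{a+1}$ is a $\pm1$-value strictly between them that occurs twice, against the hypothesis, so $b=a+1$. By the interlacing, the unique $\pm1$-value strictly between $s_a$ and $s_{a+1}$ that is deleted twice is $q_{a+1}$; writing $q_{a+1}=j_{i'-1}+1=j_{i'}-1$ produces the required $i'$, with $i'_1<i'<i'_2$, and uniqueness holds because any other $j_{i''}-1$ with $i'_1<i''<i'_2$ not occurring in $t$ would be a second $q$ strictly between $s_a$ and $s_{a+1}$.
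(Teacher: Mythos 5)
Your proof is correct and is essentially the paper's own argument: the paper derives Lemma \ref{lemma10} from exactly the block decomposition you describe (the twice-deleted values $j_{i'_l}+1$ splitting the remaining $\pm 1$-values into consecutive blocks, each containing, by the count of Lemma \ref{lemma7} together with Lemmas \ref{lemma8} and \ref{lemma9}, a unique value occurring twice in $t$), and then reads off the three parts from the resulting interlacing. Your write-up of parts (1)--(3), including the dual argument for (2), is just a more explicit version of the same reasoning.
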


\section{Proof of Theorem B}\label{section4}

The topic of this section concerns gonality sequences, a subject coming from the theory of algebraic curves (see e.g. \cite{ref5}, \cite{ref6}) and adapted to graphs (see e.g. \cite{ref9}, \cite{ref10}.

\begin{definition}\label{def7}
Let $\Gamma$ be a metric graph.
The gonality sequence of $\Gamma$ is the sequence of integers $g_k(\Gamma)$ with $k \in \mathbb{Z}_{\geq 1}$ defined by  $g_k(\Gamma)= \min  \{ n\in \mathbb{Z} : \text{ there exists a divisor } D \text { on } \Gamma \text{ with } \deg (D)=n \text { and } \rank (D) \geq k \}$.
In particular $g_1(\Gamma)$ is the gonality $\gon (\Gamma)$ of $\Gamma$.
\end{definition}

As in the case of curves we define the Clifford index of a metric graph $\Gamma$ as follows.

\begin{definition}\label{CliffIndMetricGraph}
Let $\Gamma$ be a metric graph.
If $D$ is a divisor on $\Gamma$ of degree $d$ and rank $r \geq 0$ then its Clifford index is defined by $\Cliff (D)=d-2r$.
The Clifford index $\Cliff (\Gamma)$ of $\Gamma$ is the minimal value $\Cliff (D)$ for divisors $D$ of degree $d$ and rank $r$ satisfying $r \geq 1$ and $g-d+r-1 \geq 1$.
\end{definition}

We first prove the following general statement on gonality sequences of chains of cycles.

\begin{lemma}\label{lemmaE1}
Let $\Gamma$ be a chain of cycles of genus $g$ and let $g_1(\Gamma)=\gon (\Gamma)$ be the first number of the gonality sequence of $\Gamma$.
Then
\begin{description}
\item $g_r(\Gamma)=g+r$ in case $r \geq g$,
\item $g_r(\Gamma)=g-1+r$ in case $g-g_1(\Gamma)+1 \leq r \leq g-1$,
\item $g_1(\Gamma)+2r-2 \leq g_r(\Gamma) \leq g-1+r$ in case $1 \leq r \leq g-g_1(\Gamma)$.
\end{description}
\end{lemma}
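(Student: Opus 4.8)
The plan is to establish the three statements by combining the Riemann--Roch theorem for metric graphs with the gonality specialization bound for chains of cycles. First I would recall that on a metric graph $\Gamma$ of genus $g$ the Riemann--Roch theorem reads $\rank (D) - \rank (K_\Gamma - D) = \deg(D) - g + 1$, where $K_\Gamma$ is the canonical divisor of degree $2g-2$. For the range $r \geq g$: if $\deg(D) = d$ and $r = \rank(D) \geq g$, then since $\rank(K_\Gamma - D) \geq -1$ always, Riemann--Roch gives $d \geq g + r - 1$; but in fact $\rank(K_\Gamma-D)\geq 0$ would force $\deg(K_\Gamma - D)\geq 0$, i.e.\ $d \leq 2g-2$, which is incompatible with $r\geq g$ unless $\rank(K_\Gamma-D)=-1$, and then Riemann--Roch gives exactly $d = g + r$. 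Conversely $(g+r)\cdot w_g$ has rank $r$ (for instance by Theorem \ref{theorem1}, or directly since any effective divisor of degree $\geq g$ on $\Gamma$ moves freely enough), so $g_r(\Gamma) = g + r$ for $r \geq g$.

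Next, for the range $g - g_1(\Gamma) + 1 \leq r \leq g-1$, I would argue the lower bound $g_r(\Gamma) \geq g - 1 + r$ and the matching upper bound separately. For the lower bound: suppose $D$ has degree $d$ and rank $r$ with $1 \leq r \leq g-1$; if $\rank(K_\Gamma - D) = -1$ then $d = g + r - 1 < g-1+r$ is impossible, so we may assume $\rank(K_\Gamma - D) =: r' \geq 0$, and Riemann--Roch gives $d = g - 1 + r - r'$. If $r' \geq 1$ then $K_\Gamma - D$ is a divisor of rank $\geq 1$ whose "co-rank" $\rank(D) = r \geq 1$ also satisfies $g - \deg(K_\Gamma-D) + \rank(K_\Gamma-D) - 1 = r \geq 1$; so by definition of gonality applied to $K_\Gamma - D$ (or its degree), combined with the constraint that a divisor computing the gonality has degree $\geq g_1(\Gamma)$, one forces $\deg(K_\Gamma - D) \geq$ something that pushes $r'$ down — more precisely I expect $r' \leq g - 1 - r$ in the stated range, because $\deg(K_\Gamma-D) = 2g-2-d$ and a rank-$\geq r'$ divisor needs degree at least $g_{r'}(\Gamma)$, and feeding in the gonality of $\Gamma$ via the specialization lemma (Lemma 1 of \cite{ref1} / Lemma \ref{lemma1} style representing-divisor analysis, or directly \cite{ref12}) bounds $r'$. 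When $r \geq g - g_1(\Gamma)+1$ this yields $r' = 0$, hence $d = g - 1 + r$; and the divisor $(g-1+r)\cdot w_g$ realizes rank $r$ (again via Theorem \ref{theorem1}: the required tableau on $[(g - (g-1+r) + r)\times (r+1)] = [1 \times (r+1)]$ is just the single column with entries $1, \ldots, r+1$... wait, that does not cover all of $1,\dots,g$, so one should instead take a suitable divisor supported on several $w_i$), giving $g_r(\Gamma) = g - 1 + r$.

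Finally, for $1 \leq r \leq g - g_1(\Gamma)$, the upper bound $g_r(\Gamma) \leq g - 1 + r$ follows from the same construction as in the previous paragraph (a divisor of degree $g - 1 + r$ of rank $r$ always exists on a chain of cycles, e.g.\ by exhibiting a displacement tableau on $[1 \times (r+1)]$ after adjusting the support, or by noting $W^r_{g-1+r}(\Gamma) \neq \emptyset$ always), and the lower bound $g_r(\Gamma) \geq g_1(\Gamma) + 2r - 2$ is precisely the Clifford-type inequality for chains of cycles from \cite{ref12}: a divisor of rank $r \geq 1$ has Clifford index $d - 2r \geq \Cliff(\Gamma) \geq g_1(\Gamma) - 2$ (the second inequality because a gonality pencil has Clifford index $g_1(\Gamma) - 2$ and $\Cliff(\Gamma)$ is the minimum over all $D$ with $r\geq 1$, $g - d + r - 1 \geq 1$; for $r$ in this range the condition $g-d+r-1\geq 1$ holds automatically), hence $d \geq g_1(\Gamma) - 2 + 2r$. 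The main obstacle I anticipate is the middle range: carefully justifying that the "dual" rank $r' = \rank(K_\Gamma - D)$ must vanish once $r \geq g - g_1(\Gamma) + 1$, which requires either invoking the precise gonality/degree bounds from \cite{ref12} or redoing the representing-divisor and displacement-tableau bookkeeping from Theorem \ref{theorem1} to rule out low-degree high-rank divisors — everything else is routine Riemann--Roch plus explicit constructions.
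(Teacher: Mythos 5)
Your proposal is correct and follows essentially the same route as the paper: Riemann--Roch handles the range $r\geq g$ and the upper bound $g_r(\Gamma)\leq g-1+r$ (the paper gets the latter directly from $\rank (K_\Gamma -E)\geq g-1-e$ for an effective divisor $E$ of degree $e$), while the lower bound $d-2r\geq \Cliff (\Gamma)=\gon (\Gamma)-2$ from \cite{ref12} gives $g_r(\Gamma)\geq g_1(\Gamma)+2r-2$ and, played against the upper bound, forces $g_r(\Gamma)=g-1+r$ in the middle range. One caveat: your parenthetical justification of $\Cliff (\Gamma)\geq g_1(\Gamma)-2$ (a gonality pencil has Clifford index $g_1(\Gamma)-2$ and $\Cliff (\Gamma)$ is a minimum) actually proves the opposite, trivial inequality $\Cliff (\Gamma)\leq g_1(\Gamma)-2$; the direction you need is precisely the nontrivial theorem of \cite{ref12}, which you do cite, so the plan stands but that aside should be deleted.
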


\begin{proof}
From the Riemann-Roch Theorem for metric graphs it follows that $d_r(\Gamma)=g+r$ in case $r \geq g$.
For an effective divisor $E$ of degree $e$ on $\Gamma$ it also follows that $\rank (K_{\Gamma}-E) \geq g-e-1$ (here $K_{\Gamma}$ is the canonical divisor of $\Gamma$).
So we obtain $g_r(\Gamma) \leq 2g-2-(g-1-r)=g-1+r$ for $1 \leq r \leq g-1$.

In case $g_r(\Gamma)<g-1+r$ for some $1 \leq r \leq g-1$ then there exists a divisor $D$ of degree $d=g_r(\Gamma)<g-1+r$ with $\rank (D) \geq r$ and therefore $\rank (K_{\Gamma}-D) \geq 1$.
It is proved in \cite{ref12} that $\Cliff (\Gamma)=\gon (\Gamma)-2$ for a chain of cycles $\Gamma$.
For the above divisor $D$, by definition we have $d-2r=g_r(\Gamma)-2r \geq \Cliff (\Gamma) = g_1 (\Gamma)-2$ hence $g_r (\Gamma) \geq g_1 (\Gamma)+2(r-1)$.
Therefore, if $g_r (\Gamma)<g-1+r$ for some $1 \leq r \leq g-1$, we find $g-1+r > g_1(\Gamma) +2r-2$, hence $r< g-g_1 (\Gamma)+1$.
So for $g-g_1 (\Gamma)+1 \leq r \leq g-1$ we find $g_r (\Gamma)=g-1+r$.

Also in case $1 \leq r \leq g-g_1 (\Gamma)$, the previous argument gives $g_1 (\Gamma) +2(r-1) \leq g_r (\Gamma) \leq g-1+r$.
\end{proof}

We now give the proof of Theorem B, showing that the lower inequalities in Lemma \ref{lemmaE1} are sharp if $\Gamma$ is a general Martens-special chain of cycles.

\begin{proof}[Proof of Theorem B]
It is enough to prove that for each integer $r$ with $1 \leq r \leq g-k-2$ ther exists a divisor $D$ on $\Gamma$ of degree $k+2r=g_1 (\Gamma)+2(r-1)$ and rank $r$.
Because of Theorem \ref{theorem1} we need to prove that there exists an $\underline{m}$-displacement tableau on $[(g-(k+2r)+r) \times (r+1)]=[(g-k-r) \times (r+1)]$.

Write $\{ 1, \cdots, g \} \setminus \{j_1, \cdots , j_k \}=\{ l_1 < l_2 < \cdots < l_{g-k} \}$.
On $[(g-k-r) \times (r+1)]$ define $t_r(i,j)=l_{i+j-1}$.
Since $m_{l_i}=2$ for $2 \leq i \leq g-k-1$ this $t_r$ is an $\underline{m}$-displacemnt tableau on $[(g-k-r) \times (r+1)]$.
\end{proof}

As in the case of  curves we define the notion of a divisorial complete metric graph.

\begin{definition}\label{DivCompleteMetricGraph}
A metric graph $\Gamma$ of genus $g$ and Clifford index $c$ is divisorial complete if for all integers $d$ and $r$ there exists a divisor $D$ on $\Gamma$ of degree $d$ and rank $r$ if and only if it satisfies the restrictions coming from the Riemann-Roch Theorem on metric graphs and the Clifford index of $\Gamma$
\end{definition}

More concretely if $\Gamma$ is a metric graph of genus $g$ and Clifford index $c$ then $\Gamma$ if divisorial complete if and only if there exists a divisor $D$ on $\Gamma$ of degree $d$ and rank $r$ in each of the following cases.
\begin{itemize}
\item $d \geq g$ and $r=d-g$
\item $0 \leq d \leq g$ and $r=0$
\item $g \leq d \leq 2g-2$ and $r=d-g+1$
\item $r \geq 1$ and $c+2r \leq d \leq g+r-2$
\end{itemize}

The following Theorem is a sharper statement than Theorem B.

\begin{theorem}\label{theoremdivisorialComplete}
Let $\Gamma$ be a general Martens-special chain of cycles of type $k$ and genus $g$, then $\Gamma$ is a divisorial complete graph.
\end{theorem}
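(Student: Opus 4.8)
Proof proposal (sketch of the approach).

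The plan is to reduce to the four concrete cases listed after Definition~\ref{DivCompleteMetricGraph}. First note that $\Cliff(\Gamma)=k$: this is immediate from Proposition~\ref{prop1} together with the equality $\Cliff(\Gamma)=\gon(\Gamma)-2$ for chains of cycles quoted in the proof of Lemma~\ref{lemmaE1}. The ``only if'' direction of divisorial completeness is nothing but the Riemann--Roch theorem for metric graphs and the definition of $\Cliff(\Gamma)$, so nothing is needed there. Thus it suffices to produce, in each of the four ranges of $(d,r)$, a divisor on $\Gamma$ of degree $d$ and rank exactly $r$.

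For the first three ranges (the ``Riemann--Roch ranges'') I would argue for an arbitrary chain of cycles. When $d\ge 2g-1$ every divisor of degree $d$ has rank $d-g$, so case~1 is automatic there; when $g\le d\le 2g-2$ one takes $D=K_{\Gamma}-E$ with $E$ a non-effective class of degree $2g-2-d\in[0,g-2]$ --- such a class exists because the effective classes of degree $e\le g-1$ form a subset of $\mathrm{Pic}^{e}(\Gamma)$ of dimension $\le e<g$ --- and Riemann--Roch gives $\rank(D)=d-g$. Cases~2 and~3 reduce to exhibiting, for each $e$ with $0\le e\le g-1$, an effective divisor of degree $e$ and rank $0$: for $e<\gon(\Gamma)$ this is automatic, and for $\gon(\Gamma)\le e\le g-1$ one starts from such a divisor of degree $\gon(\Gamma)-1$ and adds general points, using the elementary fact (stated in the next paragraph) that adding a general point to a special divisor does not change its rank; subtracting the result from $K_{\Gamma}$ settles case~3. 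Alternatively all of this can be read off from Theorem~\ref{theorem1} or from the theory of $v$-reduced divisors recalled in Section~\ref{section5}.

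The heart of the matter is case~4: given $r\ge 1$ and $k+2r\le d\le g+r-2$ (so in particular $r\le g-k-2$), I want a divisor of degree $d$ and rank exactly $r$. By Theorem~B --- equivalently, by the construction $t_{r}(i,j)=l_{i+j-1}$ in its proof together with the lower bound of Lemma~\ref{lemmaE1} --- there is a divisor $D_{0}$ with $\deg(D_{0})=k+2r$ and $\rank(D_{0})=r$. For $d>k+2r$ I set $D=D_{0}+P_{1}+\dots+P_{s}$ with $s=d-k-2r$ and the points $P_{i}$ chosen successively general. The key observation is: if $E$ is a divisor with $\rank(K_{\Gamma}-E)\ge 0$, then $\rank(E+P)=\rank(E)$ for general $P$. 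This follows from Riemann--Roch together with the remark that for any $F$ with $\rank(F)\ge 0$ one has $\rank(F)-1\le\rank(F-P)\le\rank(F)$ for every $P$, with $\rank(F-P)=\rank(F)-1$ for general $P$ as soon as $\rank(F)$ is exactly $\rho$ --- an immediate consequence of the definition of rank and of the closedness of $W^{\rho}_{\deg F-1}(\Gamma)$. Now $\rank(K_{\Gamma}-D_{0})=g-k-r-1\ge 1$ by Riemann--Roch, and inductively $\rank(K_{\Gamma}-D_{0}-P_{1}-\dots-P_{t})=g-k-r-1-t$; since $t$ never exceeds $s\le g-k-r-2$, this rank stays $\ge 0$ throughout, so each partial divisor $D_{0}+P_{1}+\dots+P_{t}$ with $0\le t<s$ is special, and adding the next general point keeps the rank at $r$. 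Hence $\deg(D)=d$ and $\rank(D)=r$, which finishes case~4.

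The step I expect to be the main obstacle --- and in fact the only point where the hypothesis that $\Gamma$ is a \emph{general} Martens-special chain of cycles of type $k$ is used --- is the construction of the starting divisor $D_{0}$ of degree $k+2r$ and rank $r$, i.e.\ the input from Theorem~B; once that is in hand, case~4 (and with it the whole theorem) follows from Riemann--Roch and the elementary behaviour of rank under adding a general point, while the three Riemann--Roch ranges are routine. As a by-product the argument shows that the Clifford-range divisors on $\Gamma$ realise \emph{every} degree in $[\,k+2r,\;g+r-2\,]$ for each $r$, so the lower bounds of Lemma~\ref{lemmaE1} are sharp --- which is precisely the content of Theorem~B.
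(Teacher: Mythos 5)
Your proposal is correct, but for the main case it follows a genuinely different route from the paper. For the Clifford range the paper also starts from the degree-$(k+2r)$, rank-$r$ divisor of Theorem B and adds $d-k-2r$ general points, but it then verifies $\rank(D)=r$ \emph{combinatorially}: it computes the representing divisor of the sum, observes that the added general points (and the general points on the cycles $C_{j_i}$) exclude those cycle indices from the image of any $\underline{m}$-displacement tableau on $[(g-d+r+1)\times(r+2)]$, and kills the tableau by counting the values it would need. You replace all of this by Riemann--Roch duality: $\rank(E+P)=\rank(E)$ precisely when $\rank(K_\Gamma-E-P)=\rank(K_\Gamma-E)-1$, and such a $P$ always exists when $\rank(K_\Gamma-E)\geq 0$, directly from the definition of rank (pick a point of a degree-$(\rho+1)$ effective divisor not dominated by $K_\Gamma-E$). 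This is more elementary and strictly more general --- it shows that on \emph{any} metric graph a divisor of degree $d_0$ and rank exactly $r$ propagates to all degrees in $[d_0,\,g+r-1]$ with rank exactly $r$ --- so the only place the Martens-special hypothesis enters is Theorem B, exactly as you say. Two small points of care: (i) the phrase ``general $P$'' is not literally justified by closedness of $W^{\rho}_{\deg F-1}(\Gamma)$, since a proper closed subset of a metric graph can contain an open set (e.g.\ for $F=w_1+P_2$ on a chain of two cycles the set $\{P:\rank(F-P)=\rank(F)\}$ contains the whole bridge); but your induction only needs the \emph{existence} of one suitable point at each step, which your rank-definition argument does provide. (ii) Case 2 requires rank-$0$ divisors of every degree $0\leq d\leq g$, not just $d\leq g-1$; your point-adding argument covers $d=g$ as well since the degree-$(g-1)$ divisor is still special, so this is only a slip in the stated range. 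The paper's treatment of the Riemann--Roch ranges via $P$-reduced divisors and yours via the dimension count of effective classes are both routine and interchangeable.
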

\begin{proof}
Because of the Theorem of Riemann-Roch, taking any divisor $D$ on $\Gamma$ of degree $d>2g-2$, it has rank $r=d-g$.
Notice that in case $c+2r \leq d \leq g+r-2$ we have $r \leq g-c-2$ and therefore $d \leq 2g-2-(c+2)<2g-2$.
So we can restrict to the case $d \leq 2g-2$.

For $1 \leq i \leq g$ choose $P_i \in C_i \setminus \{v_i,w_i \}$ and $P \in C_g \setminus \{v_g, P_g \}$.
For every integer $0 \leq d \leq g$ the divisor $D_d= \sum^d_{i=1}P_i$ is a $P$-reduced divisor (at the beginning of Section \ref{section5} this concept is shortly recalled).
Since $P$ does not belong to $D_d$ it implies $\rank (D_d)=0$.
So for each $0 \leq d \leq g$ we find a divisor of degree $d$ and rank 0.

Also for every integer $g+1 \leq d \leq 2g-2$ the divisor $D_d=\sum ^g_{i=1}P_i+(d-g)P$ is a $P$-reduced divisor.
This implies $\rank  (D_d) \leq d-g$ and the Riemann-Roch Theorem implies $\rank (D_d)=d-g$.

Using the Riemann-Roch Theorem for every integer $g \leq d \leq 2g-2$ we find $D'_d=K_{\Gamma}-D_{2g-2-d}$ has rank $r=d-g+1$.

So now we only have to prove in case $r \geq 1$, $g-d+r-1 \geq 1$ and $d-2r \geq k$  the existence of a divisor $D$ of degree $d$ and rank $r$ (remember $\Cliff (\Gamma)=k$).
We already noticed that $r \leq g-k-2$.

Letting $d'=\Cliff (\Gamma)+2r=k+2r$ with $1 \leq r \leq g-k-2$ then the integers $r$, $d'$ do satisfy the conditions and Theorem B implies there exist divisors $D'$ on $\Gamma$ of rank $r$ and degree $d'$.
From the proof of Theorem B and using the notations from that proof we can assume the representing divisor of $D'$ is of the type $D''=\sum ^g_{i=1}Q_i-(d'-g)w_g$ with $Q_{l_i} \in \{ v_{l_i},w_{l_i} \}$ and $Q_i$ arbitrary on $C_i$ if $i \in \{ j_1, \cdots , j_k \}$.

Since $d\leq g+r-2 \leq g+2r =g-k +(2r+k)$ we have $d-d' \leq g-k$.
Take general points $Q'_i$ on $C_{l_i}$ for $1 \leq i \leq d-d'$ and consider $D = D'+Q'_1 + \cdots +Q'_{d-d'}$.
Let $D'''=P'_1 + \cdots + P'_g + (d-g)w_g$ be the representing divisor of $D$.
Since $D'''$ is equivalent to $D''+Q'_1 + \cdots + Q'_{d-d'}$ there exists a rational function $f$ on $\Gamma$ such that $\divi (f)=(D''+Q'_1 + \cdots + Q'_{d-d'})-D'''$ (this concept is recalled in the introduction of \cite {ref1}).
For $1 \leq i \leq d-d'$ the restriction $f_i$ of $f$ to $C_{l_i}$ implies the existence of integers $a$ and $b$ such that on $C_{l_i}$ we have $\divi (f_i)=av_{l_i} + bw_{l_i} + Q'_i-P'_{l_i}$.
Since $Q'_i$ is general on $C_{l_i}$ we have $P'_{l_i} \notin \{ v_{l_i}, w_{l_i} \}$.
Also since we can take $Q_{j_i}$ general on $C_{j_i}$ we have $P'_{j_i}$ is general on $C_{j_i}$.

Now assume $\rank (D) >r$.
Because of Theorem \ref{theorem1} there exists an $\underline{m}$-displacement tableau $t$ on $\lambda = [(g-d+r+1) \times (r+2)]$ such that for all $(x,y) \in \lambda$ if $P'_{t(x,y)} = <\xi >_{t(x,y)}$ then $\xi \equiv x-y \mod {m_{t(x,y)}}$.
But for $1 \leq i \leq d-d'$ we have $m_{l_i}=2$ and $P'_{l_i} \notin \{ v_{l_i}, w_{l_i} \}$ hence $l_i \notin \im (t)$.
For $1 \leq i \leq k$ the point $P'_{j_i}$ is general on $C_{j_i}$ hence $j_i \notin \im (t)$ because of Lemma 3 in \cite {ref1}.
It follows that $\im (t)$ has at most $g-k-d+d'$ different values.
But a tableau on $\lambda$ needs at least $g-d+r+1+r+1=g-d+2r+2$ different values.
So we need $d'-k \geq 2r+2$ contradicting $d'=k+2r$.

Since $\rank (D')=r$ and $D-D'$ is effective is it follows $\rank (D) \geq r$.
So we conclude $D$ is a divisor of degree $d$ and rank $r$.
\end{proof}

In Lemma 1 of \cite{ref18} it is proved that a divisorial complete curve $C$ of genus $g$ satisfies the inequalities $2\Cliff (C)+1 \leq g \leq 2\Cliff(C) + 4$ in case $\Cliff(C) \geq 3$.
In particular if $g \geq 2k+4$ then there is no smooth curve of genus $g$ having the same gonality sequence as a general Martens-special chain of cycles of type $k$ of genus $g$ if $k\geq 3$.
Also, in contrast to the case of smooth curves, we obtain the existence of divisorial complete metric graphs of given Clifford index of arbitrary large genus.

At the moment no divisorial complete curve $C$ with $\Cliff(C) \geq 6$ is known to the author.

\section{Proof of Theorem A}\label{section5}

For a metric graph $\Gamma$ we recall the defintion of the Brill-Noether number introduced in \cite{ref2}.

\begin{definition}\label{defBNnumber}
Let $r$, $d$ be non-negative integers.
We define $w^r_d(\Gamma)=-1$ in case there is no divisor $D$ on $\Gamma$ of degree $d$ with $\rank (D) \geq r$.
Otherwise it is the maximal non-negative integer $w$ such that for each effective divisor $E$ of degree $w+r$ on $\Gamma$ there exists an effective divisor $D$ on $\Gamma$ of degree $d$ such that $D$ contains $E$ (meaning $D-E$ is an effective divisor) and $\rank (D) \geq r$.
\end{definition}

It should be noted that in case $C$ is a smooth irreducible projective curve and $W^r_d(C) \neq \emptyset$ then using a similar definition to define $w^r_d(C)$ using dimensions of complete linear systems, one has $w^r_d(C)=\dim (W^r_d(C))$.
 
In order to prove Theorem A we are going to use the theory of reduced divisors.
For a graph $\Gamma$ and a point $v$ on $\Gamma$ the definition of a $v$-reduced divisor on $\Gamma$ can be found in e.g. \cite{ref15}, Definition 2.2.
In \cite{ref15}, Theorem 2.3, it is  proved that for each divisor $D$ on $\Gamma$ there is a unique $v$-reduced divisor $D'$ equivalent to $D$ and Algoritm 2.12 of that paper describes how, starting with an effective divisor $D$, we can find this $v$-reduced divisor $D'$ equivalent to $D$.
From that algorithm it is clear that the coefficient of $v$ in $D'$ cannot be less than the coefficient of $v$ in $D$.
In particular if $F$ is a $v$-reduced divisor and $v$ is not a point of $F$ then there is no effective divisor $F'$ equivalent to $F$ and containing $v$.
In \cite{ref15}, Algorithm 5, one finds a method that can be used to decide whether $F$ is a $v$-reduced divisor.
This algorithm can be visualized as being Dhar's burning algorithm.
Image that $w \neq v$ contains as many backets of water as the coefficient of $w$ in $F$.
Then let a fire start at $v$ and this fire can be stopped at most once using a bucket of water.
The graph burns completely (we then say the fire starting at $v$ to $F$ burns the graph completely) if and only if $F$ is $v$-reduced.

\begin{proof}[Proof of Theorem A]

We are going to prove that each divisor of degree $k+2$ and rank at least 1 on $\Gamma$ is equivalent to an effective divisor $2v_g+F$ such that $F$ contains no point of $C_g$ and $F$ is a $v_g$-reduced divisor.
This suffices to finish the proof as follows.

On $C_g$ we can take $w_g$ such that there exists $w'_g \neq w_g$ with $2v_g$ is equivalent to $w_g + w'_g$.
Take $w''_g \in C_g \setminus \{v_g, w_g, w'_g \}$.
In case $w^1_{k+2}(\Gamma) \geq 1$ it would follow that there exists a $v_g$-reduced effective divisor $F$ of degree $k$ with no point on $C_g$ such that $w_g+w'_g+F$ is equivalent to $w_g+w''_g+F'$ for some effective divisor $F'$, hence there should exist an effective divisor on $\Gamma$ equivalent to $w'_g+F$ and containing $w''_g$.
However starting a fire on $\Gamma$ at $w''_g$ to $w'_g+F$, since $F$ contains no point of $C_g$ the fire arrives at $w'_g$ from two sides.
Since there is only one bucket of water at $w'_g$ it follows that also $w'_g$ gets burned.
But the fire also reaches $v_g$ and $F$ is $v_g$-reduced, therefore the fire burns the whole graph.
So $w'_g+F$ is a $w''_g$-reduced divisor and it does not contain $w''_g$, hence there is no effective divisor equivalent to $w'_g+F$ containing $w''_g$.
This gives a contradiction to $w^1_{k+2}(\Gamma) \geq 1$, hence $w^1_{k+2}(\Gamma)=0$.

We are now going to restrict the $\underline{m}$-displacement tableaux $t$ on $[(g-k-1) \times (2)]$ describing all elements of $W^1_{k+2}(\Gamma)$
Then we prove that each representing divisor obtained from such $t$ has an equivalent divisor $2v_g+F$ with $F$ not containing any point of $C_g$ and being $v_g$-reduced.

\underline{Proof in case $j_{i+1}-j_i \geq 3$ for all $1 \leq i < k$.}

In case $j_i$ is deleted exactly once and $j_i-1$ (resp. $j_i+1$) is deleted from the line still containing $j_i$, then replacing $j_i$ by $j_i-1$ (resp. $j_i+1$) in the tableau we still find the same points $<\xi _{i'}>_{i'}$ determined by the tableau for $i'\neq j_i$.
Indeed, from the even number of removes smaller than $j_i-1$, it follows for $t(x,y)=j_i-1$ and $t(x',y')=j_i$ one has $x-y \equiv x'-y' \mod{2}$.
So the mentioned replacement again gives rise to an $\underline{m}$-replacement tableau.
The only difference is  that the point on $C_{j_i}$ is not  fixed  any more by the tableau but can be chosen freely.
Therefore  the divisors coming from such tableau are also coming from the tableau obtained from such replacements.
Therefore, from now on, if $j_i$ is deleted exactly once we assume that $j_i-1$ or $j_i+1$ is deleted from the same line  as $j_i$.

This also implies that if exactly $x$ values $j_i$ ($1 \leq i \leq k$) appear on the first line of an $\underline{m}$-displacement tableau $t$ on $[(g-k-1) \times (2)]$ then also $x$ values $j_{i}$ appear on the second row.
Because of the tableaux-requirements, if before the $j$-th column the number of values $j_i$ on the first row is exactly $x'$ then at most $x'$ values $j_i$ are on the second row before the $j$-th column.

Let $D$ be a representing divisor coming from some $\underline{m}$-displacement tableau on $[(g-k-1) \times (2)]$.
Let $F'$ be the restriction of $D$ to $C_1 \cup \cdots \cup C_{g-1}$.
We are going to move $F'$ as much as possible "to the right".
By this we mean we look for an effective divisor $F+mw_{g-1}$ equivalent to $F'$ and $F$ the $v_g$-reduced divisor we are looking for.
Of course $mw_{g-1}$ can be replaced by $mv_g$.
In case each $j_i$ with $1 \leq i \leq k$ is deleted twice then $F=\xi _1 + \cdots + \xi_k$ with $\xi_i \in (C_{j_i} \setminus \{w_{j_i}\}) \cup \{ v_{j_i+1} \}$.
We will see that in case $j_i$ for some $1 \leq i \leq k$ appears in the first row of $t$ then there is no such point $\xi _i$ on $F$ .
In case $j_i$ appears in the second line of $t$ then we will see $\xi_i$ in $F$ is replaced by $\xi_i +v_{j_i+1}$ with $\xi_i \in C_{j_i} \setminus \{w_{j_i}\}$.
It will follow that $m=g-1-k$.
From Lemma \ref{lemma7+} it follows $D$ contains $<g-k-3>_g$.
On $C_g$ we have $(g-1-k)v_g+(k+2-g)w_g+<g-k-3>_g$ is equivalent to $2v_g$, therefore completing the proof in this case.

This moving "to the right" will be done step by step.
For $1 \leq i \leq g-1$ we write $F'_i$ to denote the restriction of $D$ to $C_1 \cup \cdots \cup C_i$ and we are going to prove it is equivalent to  a divisor of the type $F_i+m_iw_i$ with $m_i \geq 0$ and $F_i$ is a $v_g$-reduced divisor (it will be  the restriction of $F$ to $C_1 \cup \cdots  \cup C_i$).
Of course $mw_i$ can be replaced by $mv_{i+1}$.
We will say that "$mv_{i+1}$ is obtained from $F'_i$".
We are going to use induction on $i$ the case $i=1$ being true because of Lemma \ref{lemma7+}.

Take some $1 \leq i \leq g-1$.
Let $s$ (resp. $s'$; $s''$) be the number of $1 \leq i' \leq k$ such that $j_{i'}<i$ and $j_{i'}$ occurs on the second row of $t$ (resp. $j_{i'}$ occurs on the first row of $t$; $j_{i'}$ does not occur in $t$).
Note that because $t$ is a tableau we have $s \leq s'$.
By induction we are going to show that for $2 \leq i \leq g$ one has $(i-1-2s-s'')v_i$ is induced from $F'_{i-1}$ except from two cases.
If $i-1=j_{i'}$ and $j_{i'}$ does not occur in the tableau then there is the possibility that $F_{i-1}=F_{i-2}$ in which case $(i-2s-s'')v_i$ is induced from $F'_{i-1}$.
If $i-1=j_{i'}$ and $j_{i'}$ does occur in the second row then $(i-2-2s-s'')v_i$ is induced from $F'_i$.
For later use, let $\overline{s}$, $\overline{s'}$ and $\overline{s''}$ be similar numbers for the whole chain of cycles.

\begin{enumerate}
\item The case $i \notin \{j_1, \cdots, j_k \}$ and if $i=j_{i'}+1$ for some $1 \leq i' \leq k$ then $j_{i'}$ does not occur on the second row of $t$.

Depending on $i$ occuring in the first or second row of the tableau, from the tableau we have $<i-2s-1-s''>_i=<i-1-2s'-2-s''>_i$ in $F'$.
By induction we can assume that $(i-1-2s-s'')v_i$ is obtained from $F'_{i-1}$ (except for one case explained at the end of (2)).
In case $i-1-2s-s''$ is even then on $C_i$ we have $(i-1-2s-s'')v_i+w_i$ is equivalent to $(i-2s-s'')w_i$.
In case $i-1-2s-s''$ is odd then we obtain $(i-2s-s'')v_i$ and on $C_i$ this is equivalent to $(i-2s-s'')w_i$ on $C_i$.
In both cases $(i-2s-s'')v_{i+1}$ is  obtained from $F'_i$ and $F_i=F_{i-1}$.

\item The case $i=j_{i'}$ and $j_{i'}$ does not occur in the tableau.

By induction we can assume that $(i-1-2s-s'')v_i$ is obtained from $F'_{i-1}$.
On $F'$ we have the point on $C_i$ is arbitrary (we denote it by $\xi '_{i'}$).
There exists a point $\xi_{i'}$ on $C_i$ such that $(i-1-2s-s'')v_i+\xi'_{i'}$ is equivalent to $(i-1-2s-s'')w_i+\xi_{i'}$ on $C_i$.
Therefore $(i-2s-(s''+1))v_{i+1}$ is obtained from $F'_i$ and $F_i=F_{i-1}+\xi_{i'}$ in case $\xi_{i'} \neq w_i$.
In case $\xi_{i'}=w_i$ then in principle $(i+1-2s-(s''+1))v_{i+1}$ is obtained from $F'_i$ and $F_{i}=F_{i-1}$ (this is the case where the induction hypothesis in (1) does not hold).
But then in the proof of (1) we have to adapt the induction hypothesis and we find as it should be that 
$(i+1-2s-(s''+1))v_{i+2}$ is obtained from $F'_{i+1}$ but $F_{i+1}=F_{i-1}+v_{i+1}$.
\item The case $i=j_{i'}$ with $j_{i'}$ occurs in the first row of the tableau.

Again by induction we can assume $(i-1-2s-s'')v_i$ is obtained from $F'_{i-1}$.
Also the point $<i-2s-s''-1>_i$ belongs to $F'$.
On $C_i$ we have $<i-2s-s''-1>_i+(i-1-2s-s'')v_{i}$ is equivalent to $(i-2s-s'')w_i$.
Therefore $(i-2s-s'') v_{i+1}$ is obtained from $F'_i$ and $F_{i+1}=F_i$.

\item The case $i=j_{i'}$ and $j_{i'}$ occurs on the second row (this also settles the case $i=j_{i'}+1$ omitted in (1) in that case).

Again by induction we can assume $(i-1-2s-s'')v_i$ is obtained from $F'_{i-1}$.
On $F'$ we have the point $<i-2s'-s''-3>_i$.
On $C_i$ the divisor $(i-2s-s''-1)v_i+<i-2s'-s''-3>_i$ is equivalent to $<2(s-s')-2>_i+(i-2s-s''-1)w_i$.
Since $m_i=0$ and $s \leq s'$ we find $<2(s-s')-2>_i \in C_i \setminus \{w_i\}$.
This is going to be the point $\xi _{i'}$ on $F$.

From $F'_i$ we obtain $(i-2s-s''-1)v_{i+1}$ (we now use it to settle the omitted case in (1)).
The point $<i-2s'-s''-2>_{i+1}$ is on $F'$.
In case $i-2s-s''-1$ is even then this point on $F'$ is $v_{i+1}$.
Then on $C_{i+1}$ the divisor $(i-2s-s'')v_{i+1}$ is equivalent to $v_{i+1}+(i-2s-s''-1)w_{i+1}$.
In case $i-2s-s''-1$ is odd then this point on $F'$ is $w_{i+1}$.
Then on $C_{i+1}$ the divisor $(i-2s-s''-1)v_{i+1}+w_{i+1}$ is equivalent to $v_{i+1}+(i-2s-s''-1)w_{i+1}$.
In both cases we obtain $((i+1)-2(s+1)-s'')v_{i+2}$ is obtained from $F'_{i+1}$ and $F_{i+2}=F_i+\xi_{i'}+v_{i+1}$.

\end{enumerate}

\underline{Modifications of the proof in the general case}

In the modifications we have to take care of the following two facts.
\begin{itemize}
\item In the previous proof, for $1\leq i \leq k$, either $j_i$ does not appear in the tableau, else $(j_i, j_i+1)$ or $(j_i-1, j_i)$ are omitted once in the same row.
In the modification it is more subtle to choose the couples that have to be omitted in the same row such that the proof still works.
\item It can happen that for some $1 \leq i < k$ with $j_{i+1}-j_i=2$ the values $j_i$ and $j_{i+1}$ appear in the tableau while $j_i+1$ does not appear in the tableau.
In those cases the construction of the divisor $F$ in the proof is more subtle.
\end{itemize}

Assume we have $1 \leq i \leq k$ and $1 \leq n \leq k-i$ with $j_{i+l+1}-j_{i+l}=2$ for $0 \leq l <n$ and none of those values $j_{i+l}$ with $0 \leq l \leq n$ is deleted twice.
We assume $i$ and $n$ are taken such that this part of the chain can not be enlarged under those conditions.
This means in case $1<i$ then $j_{i-1}$ is deleted twice or $j_i - j_{i-1}>2$ and in case $i+n<k$ then $j_{i+n+1}$ is deleted twice or $j_{i+n+1}-j_{i+n} > 2$.

First we consider the case that no value from $\{j_i-1, j_i+1, \cdots, j_{i+n}+1 \}$ is deleted twice.
Then Lemma \ref{lemma7} implies exactly one  element of the set is not deleted.

In case $j_i-1$ is not deleted then we repeat the previous arguments using the couples $(j_i,j_i+1), \cdots , (j_{i+n}, j_{i+n}+1)$.
In particular now and further on this implies that if in such a couple both values are not on the same line then we  can change the tableau by replacing the first value of that couple by the second value.
In that new tableau we obtain smaller subchains of the type we are considering and we can assume such couples do not occur.
In that case the proof of cases (3) and (4) can be repeated.

Suppose $j_{i+l}+1$ is not deleted for some $1 \leq l \leq n$.
We repeat the previous arguments using $(j_i, j_i-1), \cdots , (j_{i+l},j_{i+l}-1), (j_{i+l+1},j_{i+l+1}+1), \cdots , (j_{i+n}, j_{i+n}+1)$.
In case for some $i \leq i' \leq i+l$ we have $j_{i'}$ is on the second row but $j_{i'}+1$ is on the first row, then we use (in (4)) $<j_{i'}-2s'-s''-2>_{j_{i'}+1} = <j_{i'}-2s-s''>_{j_{i'}+1}$ since $m_{j_{i'}+1}=2$.

Assume now some value of $\{ j_i-1, j_i+1, \cdots , j_{i+n}+1\}$ is deleted twice.
We already know it is some value $j_{i+l}+1$ with $0\leq l \leq n-1$ (Lemma \ref{lemma8}) and $j_{i+l}-1$ nor $j_{i+l+1}+1$ are deleted twice (Lemma \ref{lemma9}).
We are going to prove that, in case $j_{i+l}$ and $j_{i+l+1}$ occur on different rows of $t$ then as before those two values will contribute two points of $F$.
However if $j_{i+l}$ and $j_{i+l+1}$ are both on the second row then those two values contribute to three (instead of four) points on $F$ and in case $j_{i+l}$ and $j_{i+l+1}$ are both on the first row then those two points contribute to one (instead of no) point on $F$.
With respect to the couples we make the following rule.
In case $j_{i+l}-1$ (resp. $j_{i+l+1}+1$) does not appear twice then $(j_{i+l-1},j_{i+l-1}+1)$ (resp. $(j_{i+l+2},j_{i+l+2}-1)$) have to be considered as a couple.
In case for some $l'$  with $l+1 < l' <n$ we have $j_{i+l'}+1$ does not occur in the tableau (and $l'$  is as small as possible with this property) then from Lemma \ref{lemma10} we know there is some $i'$ between $i+l$ and $i+l'$ such that $j_{i'}+1$ occurs twice in the tableau.
A similar remark holds in case there is such $0 \leq l' <l$ using $j_{i+l'}-1$.
This makes it possible to change the $+$ and $-$ signs in the couples by omitting $j_{i'}+1$ in those couples.
Also in case $l'$ would not exist then there exists some $1 \leq i' \leq l$ (or $i+l+1 \leq i' \leq i+n$) such that $j_{i'}-1$ (resp. $j_{i'}+1$) occurs twice in the tableau.
The values $j_{i'}$ occuring in some couple can be handled using the previous arguments.
Indeed, the number of removals before each couple is even, so we can assume both values of a couple appear on the same row.
Therefore for each couple we can use case (3) or (4).

As an illustration, assume $n=10$ and $j_i=5$.
Assume $j_{i+2}+1=10$ and $j_{i+7}+1=20$ are deleted twice and $j_i-1=4$, $j_{i+5}+1=16$ and $j_{i+8}+1=22$ are not deleted.
Then the couples are $(5,6)$, $(7,8)$, $(13,12)$, $(15,14)$, $(17,18)$, $(23,24)$ and $(25,26)$.

The resulting divisor $F$ has degree $k$ because of the following calculation.
Let $\overline{v}$ (resp $\overline{u}$) be the number of indices $1\leq l \leq k$ such that $j_l+1$ is deleted twice and $j_l$ and $j_{l+1}$ are both on the second (resp first) row. 
Since the tableau has to be rectangular we have $2\overline{s'}-\overline{u}+\overline{v}=2\overline{s}-\overline{v}+\overline{u}$, so $\overline{s}-\overline{v}=\overline{s'}-\overline{u}$.
Therefore the divisor $F$ has degree $2\overline{s}-\overline{v}+\overline{u}+\overline{s''}=\overline{s}+\overline{s'}+\overline{s''}=k$.
Therefore we obtain the same final arguments given in the beginning of the proof, finishing the proof in general.

To finish the proof we assume $j_{i+l}+1$ as before is deleted twice and we need to prove the assertion on the contribution to the divisor $F$ according to the lines containing $j_{i+l}$ and $j_{i+l+1}$.
Let $s$, $s'$, $s''$ be as before and let $v$ (resp. $u$) be the number of values $j_{l'}<j_{i+l}$ such that $j_{l'}+1$ is deleted twice and $j_{l'}$ and $j_{l'+1}$ are both on the second (resp. first) row.
By induction we assume $(j_{i+l}-1-s''-2s+v-u)v_{j_{i+l}}$ will be obtained from $F'_{j_{i+l}-1}$.
We write $n_l=j_{i+l}-1-s''-2s+v-u$.
Since $t$ has to be a tableau we have $s+u \leq s'+v$.

\begin{enumerate}
\setcounter{enumi}{4}
\item $j_{i+l}$ is on the second row and $j_{i+l+1}$ is on the first row.

The point $\xi '_{i+l}=<j_{i+l}-2s'-s''-3+u-v>_{j_{i+l}}$ belongs to $F'$ and on $C_{j_{i+l}}$ we have $n_lv_{j_{i+l}}+\xi'_{i+l}$ is equivalent to $n_lw_{j_{i+l}}+\xi_{i+l}$ with $\xi_{i+l}=<2(s+u)-2(s'+v)-2>_{j_{i+l}}$.
In particular $\xi_{i+l}\notin \{v_{j_{i+l}}, w_{j_{i+l}}\}$.
So, from $F'_{j_{i+l}}$ we obtain $n_lv_{j_{i+l}+1}$ and $F_{j_{i+l}}=F_{j_{i+l}-1}+\xi _{i+l}$.

On $F'$ we have an arbitrary point $\tilde{\xi'}_{i+l}$ on $C_{j_{i+l}+1}$.
We obtain $\tilde{\xi}_{i+l}$ on $C_{j_{i+l}+1}$ such that $n_lv_{j_{i+l}+1}+\tilde{\xi'}_{i+l}$ is equivalent to $\tilde{\xi}_{i+l}+n_lw_{j_{i+l}+1}$.
In case $\tilde{\xi}_{i+l}\neq w_{j_{i+l}+1}$ (resp. $\tilde{\xi}_{i+l} = w_{j_{i+l}+1}$) then from $F'_{j_{i+l}+1}$ we obtain $n_lv_{j_{i+l+1}}$ (resp. $(n_l+1)v_{j_{i+l+1}}$) and $F_{j_{i+l}+1}=F_ {j_{i+l}}+\tilde{\xi}_{i+l}$ (resp. $F_{j_{i+l}+1}=F_{j_{i+l}}$).
Up to now we only used $j_{i+l}$ is on the second row.

The point $<n_l>_{j_{i+l+1}}$ is on $F'$.
On $C_{j_{i+l+1}}$ the divisor $n_lv_{j_{i+l+1}}+<n_l>_{j_{i+l+1}}$ is equivalent to $(n_l+1)w_{j_{i+l+1}}$ and $v_{j_{i+l}+1}$ is added to $F_{j_{i+l}+1}$ in case $\tilde{\xi}_{i+l}=w_{j_{i+l}+1}$.
Finally we show that in case $n_l$ is even (resp. odd) we have $v_{j_{i+l+1}+1}$ (resp. $w_{j_{i+l+1}+1}$) on $F'$, implying $n_{l+2}v_{j_{i+l+2}}$ is obtained from $F'_{j_{i+l+2}-1}$, so we can continue the proof. 

If we do not change anything to the tableau for values less than $j_{i+l}+1$ then it would not have been possible to obtain an $\underline{m}$-displacement tableau with 2 rows such that no value $j_{i+l}+1$ would have been deleted.
Indeed we know from Lemma \ref{lemma10} that there is some largest value $l'$ smaller than $l$ such that $j_{i+l'}+1$ occurs twice in the tableau.
But then if also $j_{i+l}+1$ appears twice in a tableau we know from Lemma \ref{lemma10} that there is some value $l'<l''<l$ such that $j_{i+l''}+1$ does not occur in the tableau.
But then from Lemma \ref{lemma10} we know that there is $l''<l'''<l$ such that $j_{i+l'''}+1$ appears twice in the tableau, contradiction the assumption on $l'$.
It implies that in case no value $j_{i+l}+1$ would have been deleted, this would give contradicting points in $F'$.

Assume now that $n_l$ is even (the other possibility needs similar arguments, those are left to the reader).
Since $j_{i+l}-1$ appears in the tableau it implies $v_{j_{i+l}-1}$ is in $F'$.
Indeed $n_lv_{j_{i+l}}$ is obtained from $F'_{j_{i+l}-1}$.
This means that while moving ''to the right'' we obtain a divisor containing $n_lw_{j_{i+l}-1}$.
We have the point of $F'$ on $C_{j_{i+l}-1}$ is equal to $w_{j_{i+l}-1}$ or $v_{j_{i+l}-1}$.
Since $m_{j_{i+l}-1}=2$ we have an even contribution to $n_l$ coming from $v_{j_{i+l}-1}$ while moving ''to the right''.
So in case $n_l$ is even there cannot be another contribution to $n_lw_{j_{i+l}-1}$, implying the claim.
If $j_{i+l}-1$ is on the second row then if $m_{j_{i+l}}$ would have been equal to 2, it would imply $w_{j_{i+l}}$ is on $F'$.
If $j_{i+l}-1$ is not on the second row then it has to be on the first row.
Also $l'<l-1$ ($l'$ as defined above) in this case.
As before this implies that if we do not change anything to the tableau for values less than $j_{i+l}-1$ then it would not have been possible to obtain an $\underline{m}$-displacement tableau with 2 rows such that no value $j_{i+l}-1$ is deleted.
So, if $j_{i+l}-1$ would have been on the second line on such $\underline{m}$-displacement tableau then it would imply $w_{j_{i+l}-1}$ on $F'$.
Since $j_{i+l}$ is on the second row instead of $j_{i+l}-1$ we find that if $m_{j_{i+l}}$ would have been equal to 2, it would imply $w_{j_{i+l}}$ is in $F'$.

So if $j_{i+l}+1$ would have been on the second row it would have given $v_{j_{i+l}+1}$ on $F'$.
This implies that if $j_{i+l+1}+1$ is on the second row it implies the point $v_{j_{i+l+1}+1}$ on $F'$.
If $j_{i+l}+1$ would have been on the first row it would have given $w_{j_{i+1}+1}$ on $F'$.
Therefore if $m_{j_{i+l+1}}$ would have been equal to 2, it would imply $w_{j_{i+l+1}}$ on $F'$.
This implies again that if $j_{i+l+1}+1$ is on the first row it implies the point $v_{j_{i+l+1}+1}$ on $F'$.
Since $j_{i+l+1}+1$ is a value of the tableau, we conclude that $v_{j_{i+l+1}+1}$ is on $F'$.  
\end{enumerate}

In the following cases we leave the details that are similar to those occuring in case (5) to the reader.

\begin{enumerate}
\setcounter{enumi}{5}
\item $j_{i+l}$ and $j_{i+l+1}$ are both on the second row.

As in case (5) we obtain $F_{j_{i+l}+1}$.
There is some point $\xi'_{i+l+1}$ on $C_{j_{i+l+1}}$ in $F'$ and $\xi _{i+l+1}$ on $C_{j_{i+l+1}}$ with $\xi' _{i+l+1}+n_lv_{j_{i+l+1}}$ is equivalent to $\xi_{j+l+1}+n_lw_{j_{i+l+1}}$ in case $\tilde{\xi}_{i+l} \neq w_{j_{i+l}+1}$ and $\xi'_{j+l+1}+(n_l+1)v_{j_{i+l+1}}$ is equivalent to $\xi _{j+l+1}+(n_l+1)w_{j_{i+l+1}}$ in case $\tilde{\xi}_{i+l} = w_{j_{i+l}+1}$.
In both cases we have $\xi _{i+l+1} \neq w_{j_{i+l+1}}$ (in this case we use that $s+u<s'+v$ since $t$ is a tableau).

So from $F'_{j_{i+l+1}}$ we obtain $n_lv_{j_{i+l+1}+1}$ in case $\tilde{\xi}_{i+l} \neq w_{j_{i+l}+1}$ and $(n_l+1)v_{j_{i+l+1}+1}$ in case $\tilde{\xi}_{i+l}=w_{j_{i+l}+1}$ and we have $F_{j_{i+l+1}}=F_{j_{i+l}+1}+\xi_{i+l+1}$.
In this case we have $w_{j_{i+l+1}+1}$ on $F'$ if $n_l$ is even and $v_{j_{i+l+1}+1}$ on $F'$ in case $n_l$ is odd.
Therefore from $F'_{j_{i+l+1}+1}$ we obtain $(n_l+1)v_{j_{i+l+2}}$ and $v_{j_{i+l+1}+1}$ is added to $F_{j_{i+l+1}}$ to obtain $F_{j_{i+l+1}+1}$ in case $\tilde{\xi}_{i+l}=w_{j_{i+l}+1}$.
Since $n_{l+2}=n_l+1$ also in this case we can continue the proof.
\item $j_{i+l}$ and $j_{i+l+1}$ both are on the first row.

Arguing as before from $F'_{j_{i+l+1}+1}$ we obtain $(n_l+3)v_{j_{i+l+2}}$ and $F_{j_{i+l+1}+1}=F_{j_{i+l}-1}+\xi _{i+l+1}$ with $\xi _{i+l+1} \in C_{j_{i+l}+1} \setminus \{w_{j_{i+l}+1}\}$ or $F_{j_{i+l+1}+1}=F_{j_{i+l}-1}+v_{j_{i+l+1}}$.
Otherwise $F_{j_{i+l+1}+1}=F_{j_{i+l+1}}$.
Since $n_{l+2}=n_l+3$ in this case we can continue the proof.

\item $j_{i+l}$ is on the first row and $j_{i+l+1}$ is on the second row.

Arguing as before from $F'_{j_{i+l+1}+1}$ we obtain $(n_l+2)v_{j_{i+l+2}}$ and $F_{j_{i+l+1}+1}=F_{j_{i+l}-1}+\tilde{\xi} _{i+l+1}+\xi _{i+l+1}$ with $\tilde{\xi}_{i+l+1} \in C_{j_{i+l}+1} \setminus \{w_{j_{i+l}+1}\}$ and $\xi _{i+l+1} \in C_{j_{i+l+1}} \setminus \{ w_{j_{i+l+1}} \}$ or $F_{j_{i+l+1}+1}=F_{j_{i+l}-1}+\xi _{i+l+1} + v_{j_{i+l+1}+1}$.
Since $n_{l+2}=n_l+2$ also in this case we can continue the proof.

\end{enumerate}
\end{proof}

\section{The case of discrete chains of cycles}\label{section6}

We start by recalling some definitions on finite graphs (see \cite{ref16}).

\begin{definition}\label{deffinitegraph}
A finite graph $G$ corresponds to two finite sets:
\begin{description}
\item $V(G)$, the set of vertices of $G$
\item $E(G)$, the set of edges of $G$
\end{description}
such that for each $e \in E(G)$ there is a subset $V(e) \subset V(G)$ consisting of two different elements called the vertices of the edge $e$.
\end{definition}

We assume a finite graph $G$ is connected.
This means, if $u,v \in V(G)$ with $u \neq v$, then there exists a sequence of edges $e_1, \cdots , e_t$ ($t \geq 1$) such that $u \in V(e_1)$, $v\in V(e_t)$ and $V(e_{i+1})\cap V(e_i) \neq \emptyset$ for $1 \leq e <t$.

\begin{definition}\label{divisorfingraph}
A divisor $D$ on a finite graph $G$ is an abstract combination $\sum _{v \in V(G)} D(v)v$ with $D(v) \in \mathbb{Z}$.
It is called an effective divisor if $D(v) \geq 0$ for all $v \in V(G)$.
\end{definition}

\begin{definition}\label{princdivisorfingraph}
Let $G$ be a finite graph.
Given $f : V(G) \rightarrow \mathbb{Z}$ we define a divisor $\divi (f)$ on $G$ as follows.
For $v \in V(G)$ we take
\[
\divi (f)(v) = \sum _{e \in E(G) : v \in V(e)} (f(v)-f(w_e))
\] 
with $V(e) = \{v, w_e \}$ in case $v \in V(e)$.
Those divisors are called principle divisors.

Two divisors $D_1$ and $D_2$ on $G$ are called linearly equivalent (written $D_1 \sim D_2$) if their difference $D_1 -D_2$ is a principle divisor.
\end{definition}

\begin{definition}\label{rankfingraph}
For a divisor $D$ on a finite graph $G$ one defines its rank as follows.
\begin{description}
\item $\rank (D)=-1$ in case $D$ is not linearly equivalent to an effective divisor, otherwise
\item $\rank (D)$ is the maximal integer $r \geq 0$ such that for each effective divisor $E$ of degree $r$ there exists an effective divisor $D'$ linearly equivalent to $D$ and containing $E$ (meaning $D'-E$ is an effective divisor).
\end{description}
\end{definition}

As in the case of metric graphs for a finite graph $G$ we can define the gonality $\gon (G)$ and the gonality sequence $g_r(G)$ ($r \in \mathbb{Z}_{\geq 1}$) (see Definition \ref{def7}) and the Clifford index $\Cliff (G)$ (see Definition \ref{CliffIndMetricGraph}).

\begin{definition} \label{metricgraphfingraph}
Let $G$ be a finite graph. 
The associated metric graph $\Gamma (G)$ is obtained by taking for each $e \in E(G)$ a line segment of length 1 connecting the vertices in $V(e)$.
\end{definition}

If $G$ is a finite graph and $D$ is a divisor on $G$ then $D$ is also a divisor on the metric graph $\Gamma (G)$.

\begin{lemma} \label{linequimetricfingraph}
If $D_1$ and $D_2$ are linearly equivalent divisors on a finite graph $G$ then $D_1$ and $D_2$ are also linearly equivalent as divisors on the metric graph $\Gamma (G)$.
\end{lemma}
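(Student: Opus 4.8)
The plan is to reduce the lemma to a single assertion: every principal divisor on $G$ is also a principal divisor on $\Gamma(G)$. Granting this, if $D_1 \sim D_2$ on $G$ then $D_1 - D_2 = \divi(f)$ for some $f : V(G) \to \mathbb{Z}$, and the assertion produces a rational function on $\Gamma(G)$ whose divisor is $D_1 - D_2$, so $D_1 \sim D_2$ on $\Gamma(G)$.

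First I would recall the description of linear equivalence on a metric graph used in \cite{ref1}: two divisors on $\Gamma(G)$ are linearly equivalent exactly when their difference is the divisor of a continuous piecewise-linear function with integer slopes, where the order of such a function at a point is the (signed) sum of its outgoing slopes over all directions at that point. Then, starting from $f : V(G) \to \mathbb{Z}$, I would build the candidate function $\tilde f : \Gamma(G) \to \mathbb{R}$ by interpolating $f$ linearly along every edge. Because in $\Gamma(G)$ every edge $e$ with $V(e) = \{v, w_e\}$ has length $1$, the restriction of $\tilde f$ to that edge has constant slope $f(w_e) - f(v)$ (measured from $v$), which is an integer; and $\tilde f$ is well defined and continuous since the interpolants on all edges incident to a vertex $u$ take the common value $f(u)$ there. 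Thus $\tilde f$ is an admissible rational function on $\Gamma(G)$.

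Next I would compute $\divi(\tilde f)$. At an interior point of an edge, $\tilde f$ is linear, so its two outgoing slopes cancel and the order is $0$; at a vertex $v$, the outgoing slope of $\tilde f$ along an incident edge $e$ with $V(e) = \{v, w_e\}$ equals $f(v) - f(w_e)$ up to the global sign convention, so the order of $\tilde f$ at $v$ is (up to that sign) $\sum_{e \in E(G):\, v \in V(e)} (f(v) - f(w_e)) = \divi(f)(v)$. Summing over all vertices gives $\divi(\tilde f) = \divi(f)$ as divisors on $\Gamma(G)$ — replacing $f$ by $-f$ at the outset if the sign convention for the order on $\Gamma(G)$ is opposite to that of Definition \ref{princdivisorfingraph}, which changes nothing since $\widetilde{-f} = -\tilde f$ and $\divi(-\tilde f) = -\divi(\tilde f)$. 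Applying this with $\divi(f) = D_1 - D_2$ finishes the proof.

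I do not expect a genuine obstacle here; the one point requiring care is the bookkeeping, namely fixing the sign convention for the order of a piecewise-linear function on $\Gamma(G)$ so that it is consistent with the definition of $\divi$ on finite graphs in Definition \ref{princdivisorfingraph}, and checking that parallel edges between the same pair of vertices (which are permitted) and interior points of edges each contribute correctly. Everything else rests on the single elementary observation that the edges of $\Gamma(G)$ have length $1$, so integer vertex values yield integer slopes.
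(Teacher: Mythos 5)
Your proof is correct and follows essentially the same route as the paper: extend $f$ to $\Gamma(G)$ by affine interpolation along each unit-length edge (so the slopes are the integers $f(v)-f(u)$) and check that the resulting piecewise-linear function has divisor $\divi(f)$. The paper's version merely phrases the construction by fixing a base vertex and prescribing slopes rather than interpolating values, which is the same thing.
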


\begin{proof}
Take $f : V(G) \rightarrow \mathbb{Z}$ such that $\divi (f) = D_1 - D_2$.
Construct a rational function $\tilde{f} : \Gamma (G) \rightarrow \mathbb{R}$ as follows.
Choose an arbitrary vertex $v_0$ of $V(G)$ and put $\tilde{f}(v_0)=0$.
For each $e \in E(G)$ with $V(e)= \{ u,v \}$, on the segment of $\Gamma (G)$ corresponding to $e$ from $u$ to $v$ we take $\tilde {f}$ affine with slope $f(v)-f(u)$.
In this way $\tilde {f}$ is completely and well-defined and $\divi (\tilde {f}) = D_1 - D_2$ on $\Gamma (G)$.
\end{proof}

For a divisor $D$ on a finite graph $G$ we also write $\rank _G(D)$ to denote the rank of $D$ as a divisor on $G$ and $\rank _{\Gamma (G)}(D)$ as a divisor on $\Gamma (G)$ (further on, when we write $\rank (D)$ it still means $\rank _G(D)$).
The following result comes from \cite {ref17}.

\begin{proposition} \label{rankfinmetricgraph}
If $G$ is a finite graph and $D$ is a divisor on $G$, then $\rank _G (D)=\rank _{\Gamma (G)}(D)$.
\end{proposition}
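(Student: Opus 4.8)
The plan is to prove the two inequalities $\rank_{\Gamma(G)}(D)\le\rank_G(D)$ and $\rank_{\Gamma(G)}(D)\ge\rank_G(D)$ separately. Both are built on a single auxiliary statement: for a divisor $D'$ on $G$, which is automatically supported on $V(G)$, $D'$ is linearly equivalent to an effective divisor on $G$ if and only if it is linearly equivalent to an effective divisor on $\Gamma(G)$. I would prove this using reduced divisors (Section \ref{section5}). Fix $v\in V(G)$ and let $D'_v$ be the $v$-reduced divisor of $D'$ on $G$, an integral divisor supported on $V(G)$. The claim is that $D'_v$ is also $v$-reduced when considered on $\Gamma(G)$. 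By Dhar's burning algorithm this means a fire lit at $v$ must burn all of $\Gamma(G)$; but $D'_v$ places chips only at vertices, so on each edge the metric fire behaves exactly like the combinatorial one, the two get stopped at the same vertices, and hence the fire on $\Gamma(G)$ burns everything precisely because the fire on $G$ does --- and it does, since $D'_v$ is $G$-$v$-reduced. By uniqueness of the $v$-reduced divisor on $\Gamma(G)$, $D'_v$ is then the $v$-reduced divisor of $D'$ on $\Gamma(G)$ as well. As a divisor on either graph is linearly equivalent to an effective one exactly when its $v$-reduced form is effective, the auxiliary statement follows; its ''if'' half is in any case an immediate consequence of Lemma \ref{linequimetricfingraph}.

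Granting this, $\rank_{\Gamma(G)}(D)\le\rank_G(D)$ is immediate. If $\rank_{\Gamma(G)}(D)\ge r$, then for every effective divisor $E$ of degree $r$ supported on $V(G)$ the divisor $D-E$ is linearly equivalent on $\Gamma(G)$ to an effective divisor; since $D-E$ is supported on $V(G)$, the auxiliary statement makes $D-E$ linearly equivalent to an effective divisor on $G$ too, and as $E$ ranges over all vertex-supported effective divisors of degree $r$ this gives $\rank_G(D)\ge r$.

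For the opposite inequality one has to show that $D-E$ is linearly equivalent to an effective divisor on $\Gamma(G)$ for \emph{every} effective divisor $E$ of degree $r:=\rank_G(D)$ on $\Gamma(G)$, including those supported in the interiors of edges. By the auxiliary statement, this is precisely the assertion that $V(G)$ is a rank-determining set for $\Gamma(G)$. The quickest way is to quote Luo's theorem on rank-determining sets of metric graphs, by which the vertex set of a connected loopless model is rank-determining for the associated metric graph; then any such $E$ is reduced to the vertex-supported case already handled. Alternatively, to keep things self-contained, one may use subdivisions: the (combinatorial) rank is invariant under subdividing edges (one inequality of which follows from the burning argument above, the other being a known result), and the vertices of the iterated subdivisions of $G$ are dense in $\Gamma(G)$, so $E$ is a limit of effective divisors $E_N$ each supported on such vertices; each $D-E_N$ is linearly equivalent to an effective divisor by subdivision-invariance together with Lemma \ref{linequimetricfingraph}, and one passes to the limit by a compactness argument --- all the divisors in play have the fixed degree $\deg(D)-r$, and the piecewise-linear functions realizing the equivalences have integer slopes bounded by $\deg(D)$, hence are equicontinuous --- to conclude $D-E\sim$ effective. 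Either way $\rank_{\Gamma(G)}(D)\ge r=\rank_G(D)$.

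I expect the main obstacle to be the second inequality, and inside it the rank-determining (equivalently, subdivision-invariance) input: this is the only place where the metric structure of $\Gamma(G)$ is genuinely exploited, and it is exactly the step that identifies the two a priori distinct notions of divisor and of linear equivalence. The accompanying limiting argument, though routine, also needs care so that no rank is lost in the limit. Everything else --- the first inequality and the behaviour of reduced divisors --- is formal once Dhar's burning algorithm is in hand.
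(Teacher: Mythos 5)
Your argument is essentially correct, but note that the paper does not prove this proposition at all: it is quoted verbatim from Hladk\'y--Kr\'al'--Norine \cite{ref17}, so there is no in-paper proof to match. What you have written is a legitimate independent proof along the other standard route: (a) for vertex-supported divisors, agreement of ``equivalent to effective'' on $G$ and on $\Gamma(G)$, via the observation that a $v$-reduced divisor of $G$ supported on $V(G)$ remains $v$-reduced on $\Gamma(G)$ (Dhar's fire sweeps through chip-free edge interiors, so the two burning processes coincide on vertices), together with uniqueness of reduced representatives; and (b) the fact that $V(G)$ is a rank-determining set for $\Gamma(G)$, which is exactly Luo's theorem \cite{ref15} (already in the paper's bibliography). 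Granting Luo, both inequalities follow as you say. Two small points. First, you have the directions mislabelled in the auxiliary statement: the half that is immediate from Lemma \ref{linequimetricfingraph} is ``equivalent to effective on $G$ implies equivalent to effective on $\Gamma(G)$,'' not the ``if'' half; the reduced-divisor argument is what supplies the converse. This is cosmetic, since your reduced-divisor argument yields both directions at once. Second, your self-contained alternative to Luo (subdivision-invariance plus an Arzel\`a--Ascoli limit over effective divisors $E_N$ supported on subdivision vertices) is the skeleton of the original proof in \cite{ref17}, but as written it is only a sketch: you would still need to prove subdivision-invariance of the combinatorial rank and to verify that $\operatorname{div}$ of the limiting piecewise-linear function is the limit of the $\operatorname{div}(f_N)$, so that no effectivity is lost in the limit. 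If you quote Luo, the proof is complete; if you insist on the subdivision route, that step needs to be written out.
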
 

We now define the discrete notion of chains of cycles and the associated profile sequence.

\begin{definition} \label{finitecycle}
A finite cycle $G$ of size $k \in \mathbb{Z}_{\geq 2}$ is a finite graph $G$ with $V(G) = \{ v_1, \cdots ,v_k \}$ and $E(G)= \{ \{ v_1,v_2 \}, \{ v_2,v_3 \}, \cdots , \{v_{k-1},v_k \}, \{ v_k,v_1 \} \}$.
So $G$ is completely determined by putting the order on $V(G)$ and using indices compatible with that order.
\end{definition}

\begin{figure}[h]
\begin{center}
\includegraphics[height=2 cm]{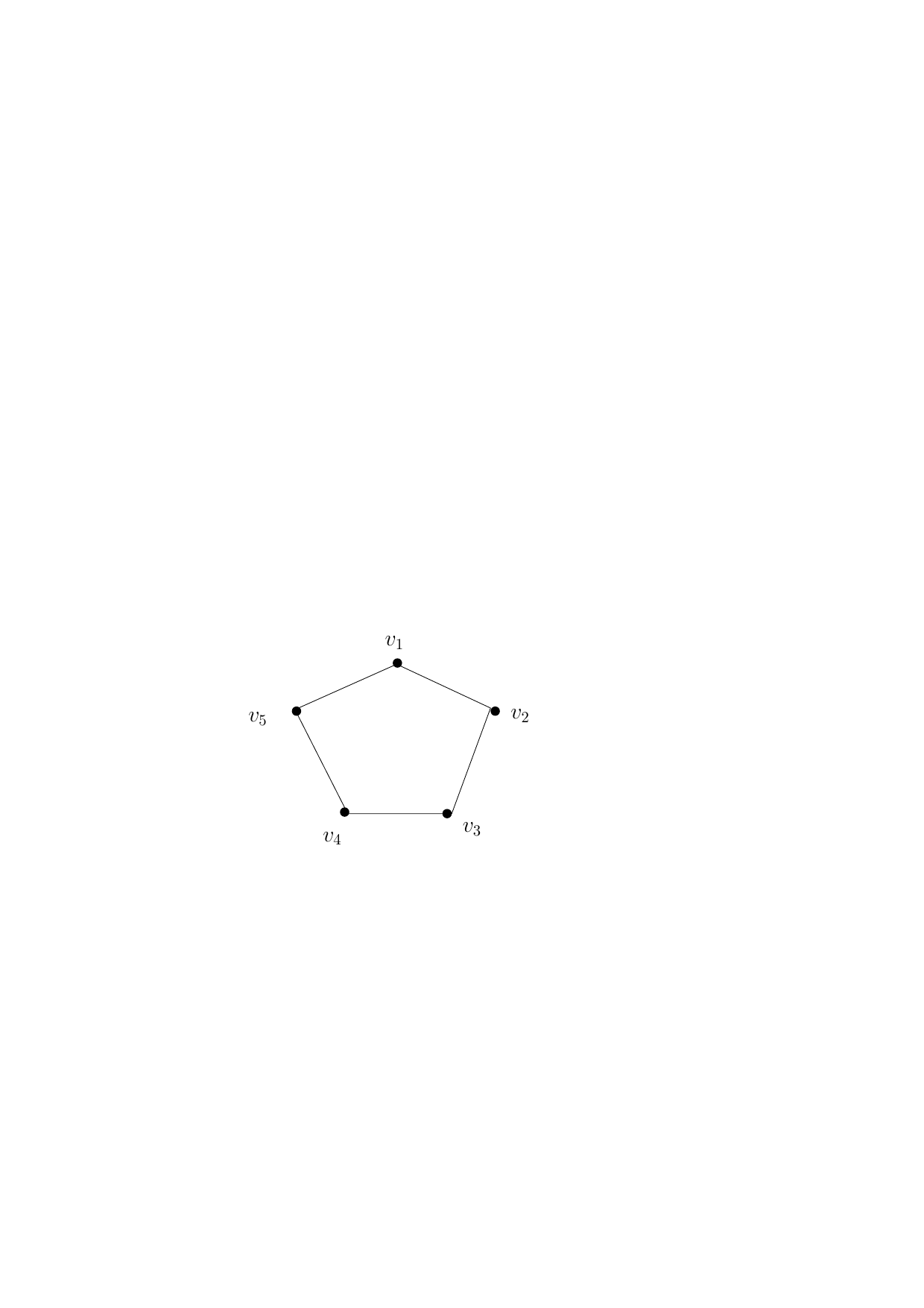}
\caption{A finite cycle of length 5 }\label{Figuur 3}
\end{center}
\end{figure}

\begin{definition} \label{discretechaincycles}
A discrete chain of cycles $G$ of genus $g$ is obtained as follows.
Take $g$ finite cycles $G_1, \cdots, G_g$ of sizes resp. $k_1, \cdots, k_g$.
For $1 \leq i \leq g$ we write $V(G_i)= \{ v_{i,1}, \cdots , v_{i,k_i} \} $ (with the order as in Definition \ref{finitecycle}).
For $1 \leq i \leq g$ choose $v_{i,j_i} \neq v_{i,1}$.
Now we put
\[
V(G)=\cup _{i=1}^g V(G_i)
\]
\[
E(G)=(\cup _{i=1}^g E(G_i)) \cup \{ \{ v_{1,j_1},v_{2,1} \}, \{ v_{2,j_2},v_{3,1} \}, \cdots , \{v_{g-1,j_{g-1}},v_{g,1} \} \}
\]
\end{definition}

\begin{figure}[h]
\begin{center}
\includegraphics[height=2 cm]{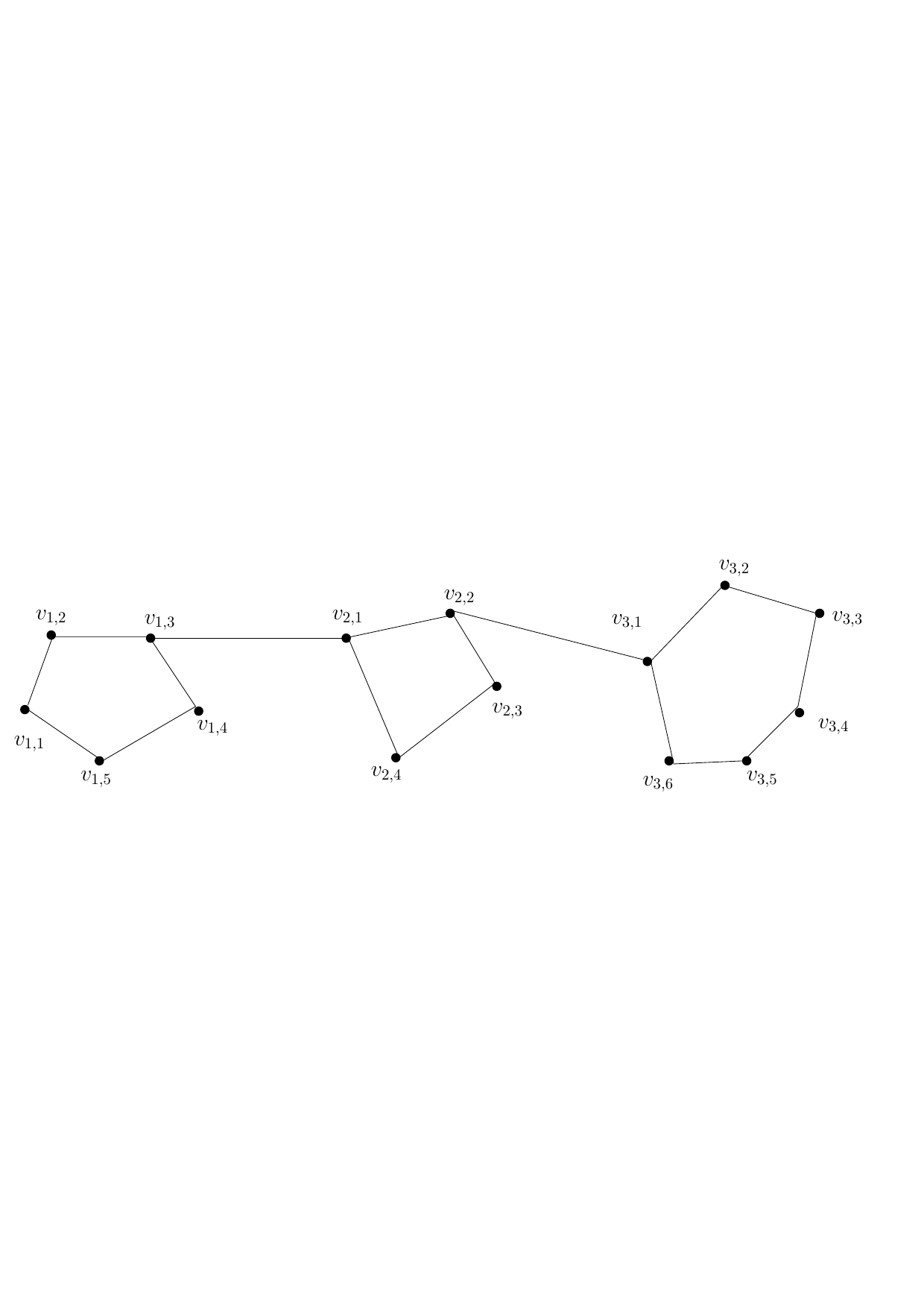}
\caption{A discrete chain of cycles of genus 3 }\label{Figuur 4}
\end{center}
\end{figure}

In what follows, we use the notations from Definition \ref{discretechaincycles}.

\begin{definition} \label{profileseqfingraph}
Associated to a discrete chain of cycles $G$ of genus $g$ there is a profile sequence $\underline{m}=(m_2, \cdots, m_g)$ defined as follows.
For $2 \leq i \leq g$ let $m_i$ be the smallest positive integer such that $m_i(j_i-1)$ is an integer multiple of $k_i$.
\end{definition}

\begin{definition} \label{notationpointdiscretechaincycles}
For $\xi \in \mathbb{Z} [ \frac{1}{j_i-1} ]$ we write $< \xi >_i$ to denote the vertex $v_{i,j}$ on $V(G_i)$ such that $(j_i -1) \xi +j_i \equiv j \mod{k_i}$.
\end{definition}

Note that $<0>_i=v_{i,j_i}$ and $<-1>_i=v_{i,1}$.

Of course values $m_i=0$ cannot occur if $G$ is a discrete chain of cycles.
Moreover $\underline{m}$ is also the profile sequence of $\Gamma (G)$.
Also for $\xi \in \mathbb{Z}[\frac {1}{j_i-1}]$ we have $<\xi>_i$ on $\Gamma (G)$ is equal to $<\xi >_i$ on $G$.

On a discrete chain of cycles we prove the analogon of Lemma \ref{lemma1}.

\begin{lemma} \label{lemma1discrete}
Let $D$ be any divisor of degree $d$ on a discrete chain of cycles $G$.
Then $D$ is linearly equivalent to a unique divisor of the form $\sum _{i=1}^g v_{i,j'_i} +(d-g)v_{g,j_g}$.
\end{lemma}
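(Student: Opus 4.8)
The plan is to mimic the proof of Lemma \ref{lemma1} in \cite{ref11}, working entirely within the combinatorial framework of a discrete chain of cycles. The statement has two parts: existence of a divisor of the stated shape in each linear equivalence class, and uniqueness of that representative. I would treat these in turn.

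For existence, I would proceed cycle by cycle, from $G_g$ down to $G_1$, pushing the divisor towards the ``exit vertex'' $v_{g,j_g}$ of the last cycle. The key local fact is that on a single finite cycle $G_i$, any divisor is linearly equivalent to one supported on a single chosen vertex together with a multiple of another chosen vertex; concretely, for a cycle of size $k_i$, the function that is affine with a fixed slope around the cycle realizes the relation $v_{i,a}+v_{i,b}\sim v_{i,j_i}+v_{i,j'_i}$ for a suitable $j'_i$ determined by $a+b$ modulo $k_i$. Iterating this within $G_i$ reduces the restriction of $D$ to $G_i$ to the form $v_{i,j'_i}+(\text{coefficient})\,v_{i,1}$, where $v_{i,1}$ is the entry vertex of $G_i$ (the one attached by an edge to $G_{i-1}$), except on $G_1$ where there is no incoming edge so the leftover multiplicity is simply absorbed into $v_{1,j'_1}$ after a shift. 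The multiplicity piled up at $v_{i,1}$ can be transported along the connecting edge $\{v_{i-1,j_{i-1}},v_{i,1}\}$ into $G_{i-1}$ using the obvious ``chip-firing along a bridge'' move (the analogue of moving $mw_i$ to $mv_{i+1}$ in a metric chain), so it gets added to the divisor on $G_{i-1}$ before that cycle is processed. After sweeping through all cycles, everything that cannot be absorbed into the $v_{i,j'_i}$ terms accumulates at $v_{g,j_g}$, and a degree count forces that accumulated coefficient to be $d-g$, yielding a representative of the claimed shape. Here Lemma \ref{linequimetricfingraph} and Proposition \ref{rankfinmetricgraph} are available if I want to transfer anything to $\Gamma(G)$, but the cleanest route keeps everything discrete; I would at most invoke Lemma \ref{lemma1} for $\Gamma(G)$ together with Proposition \ref{rankfinmetricgraph}-style bookkeeping as a sanity check.

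For uniqueness, suppose $\sum_{i=1}^g v_{i,j'_i}+(d-g)v_{g,j_g}$ and $\sum_{i=1}^g v_{i,j''_i}+(d-g)v_{g,j_g}$ are linearly equivalent. By Lemma \ref{linequimetricfingraph} they are also linearly equivalent as divisors on the metric graph $\Gamma(G)$, and since (as noted after Definition \ref{notationpointdiscretechaincycles}) the points $<\xi>_i$ agree on $G$ and on $\Gamma(G)$, the representing divisors of these two divisors on $\Gamma(G)$ coincide by Lemma \ref{lemma1}. One then reads off $v_{i,j'_i}=v_{i,j''_i}$ for all $i$ from the uniqueness part of Lemma \ref{lemma1}. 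Thus uniqueness on $G$ is inherited directly from uniqueness on $\Gamma(G)$.

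The main obstacle is the existence argument: one must verify carefully that the local cycle reductions and the bridge transports are genuine linear equivalences on the finite graph $G$ (not merely on $\Gamma(G)$), i.e., that the relevant integer-valued functions $f:V(G)\to\mathbb{Z}$ exist. The slope conditions around each cycle $G_i$ are exactly what force the congruences in Definition \ref{notationpointdiscretechaincycles}, and one must check these are satisfiable with integer values — this is where the integrality $\xi\in\mathbb{Z}[\tfrac{1}{j_i-1}]$ and the definition of $m_i$ enter. Once the single-cycle case is set up correctly, the induction on the number of cycles, sweeping from $G_g$ to $G_1$, is routine.
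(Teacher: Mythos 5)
Your proof is correct in substance, and your uniqueness argument is exactly the paper's: transfer the equivalence to $\Gamma(G)$ via Lemma \ref{linequimetricfingraph} and quote the uniqueness in Lemma \ref{lemma1}. For existence the paper uses the same two ingredients as you --- the bridge relations and single-cycle reductions --- but in a cleaner order: it first adds integer multiples of the principal divisors $v_{i+1,1}-v_{i,j_i}$ (arising from the functions that are $0$ on $V(G_1\cup\cdots\cup G_i)$ and $1$ on the rest) so that each restriction $D\vert_{G_i}$ has degree $1$, and then applies the Riemann--Roch theorem for finite graphs to each genus-one cycle $G_i$ separately to replace $D\vert_{G_i}$ by a single vertex $v_{i,j'_i}$, extending the witnessing function by constants to the rest of $G$. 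This sidesteps the two places where your sweep needs extra care: the reduction of a possibly non-effective divisor on a single cycle is absorbed into Riemann--Roch rather than done by hand via the relation $v_{i,a}+v_{i,b}\sim v_{i,c}+v_{i,d}$ for $a+b\equiv c+d \bmod k_i$, and no directional bookkeeping is needed. On that last point your write-up is internally inconsistent: you sweep from $G_g$ down to $G_1$ and push the leftover multiplicity at $v_{i,1}$ leftward into $G_{i-1}$, yet claim the excess accumulates at $v_{g,j_g}$; to land the coefficient $d-g$ at $v_{g,j_g}$ you must sweep from $G_1$ toward $G_g$ and push rightward (or normalize the degrees first, as the paper does, so that nothing needs to be pushed). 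Finally, the congruences of Definition \ref{notationpointdiscretechaincycles} and the torsion values $m_i$ play no role in this lemma --- they only enter in Theorem \ref{theorem1disc} --- so that worry can be dropped.
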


\begin{proof}
Without loss of generality we can assume $d=g$.

\underline{Proof of the existence}

For $1 \leq i \leq g-1$, taking $f_i : V(G) \rightarrow \mathbb{Z}$ with $f_i(v)=0$ if $v \in V(G_1 \cup \cdots \cup G_i)$ and $f_i(v)=1$ if $v \in V(G_{i+1} \cup \cdots \cup G_g)$, we have $\divi (f_i)=v_{i+1,1}-v_{i,j_i}$.
So the divisor $v_{i+1,1}-v_{i,j_i}$ is linearly equivalent to 0.
Adding a suited integer linear combination of such divisors to $D$, we can assume that each divisor $D_i=D \vert _{G_i}$ has degree 1.

From the Riemann-Roch Theorem (see \cite{ref16}) applied to $D_i$ on $G_i$ it follows we can assume $D_i$ as a divisor on $G_i$ is linearly equivalent to $v_{i,j'_i}$ for some $1\leq j'_i \leq k_i$.
Let $f'_i : V(G_i) \rightarrow \mathbb{Z}$ such that $\divi (f'_i)=D_i - v_{i,j'_i}$ on $G_i$.
Take $f_i :V(G) \rightarrow \mathbb{Z}$ with $f_i \vert _{V(G_i)}=f'_i$; $f_i (v) = f'_i (v_{i,j_i})$ for all $v \in V(G_{i+1} \cup \cdots \cup G_g)$ and $f_i (v) = f'_i (v_{i,1})$ for all $v \in V(G_1 \cup \cdots \cup G_{i-1})$.
Then $\divi (f_i)=D_i -v_{i,j'_i}$ on $G$.
So we conclude $D$ is linearly equivalent to $\sum _{i=1}^g v_{i,j'_i}$.

\underline{proof of the uniqueness}

Suppose $\sum _{i=1}^g v_{i,j'_i}$ and $\sum _{i=1}^g v_{i,j''_i}$ are linearly equivalent on $G$ with $1 \leq j'_i, j''_i \leq k_i$.
From Lemma \ref{linequimetricfingraph} we know that $\sum _{i=1}^g v_{i,j'_i}$ and $\sum _{i=1}^g v_{i,j''_i}$ are linearly equivalent as divisors on $\Gamma (G)$.
Then equality of those two divisors follows from the uniqueness in Lemma \ref{lemma1}.
\end{proof}

As in the metric case, for a divisor $D$ on the discrete chain of cycles $G$ we call this unique divisor $\sum _{i=1}^g v_{i,j'_i} + (d-g)v_{g,j_g}$ the representing divisor of $D$.

Now we can prove the discrete version of Theorem \ref{theorem1}.

\begin{theorem} \label{theorem1disc}
Let $D$ be a divisor of degree $d$ on a discrete chain of cycles $G$ and let $\sum _{i=1}^g v_{i,j'_i} + (d-g) v_{g,j_g}$ be the representing divisor.
Then $\rank (D) \geq r$ if and only if there exists an $\underline{m}$-displacement tableau $t$ on $[(g-d+r) \times (r+1)]$ such that for $(x,y) \in [(g-d+r) \times (r+1)]$ we have $v_{t(x,y),j'_{t(x,y)}} = <x-y>_{t(x,y)}$.
\end{theorem}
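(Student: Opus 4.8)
The plan is to deduce Theorem \ref{theorem1disc} from its metric counterpart, Theorem \ref{theorem1}, applied to the associated metric graph $\Gamma(G)$, which is a chain of cycles of genus $g$ with the same profile sequence $\underline{m}$ (so that the notion of an $\underline{m}$-displacement tableau on $[(g-d+r)\times(r+1)]$ is literally the same on both sides).

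First I would identify the representing divisors. Let $\Delta = \sum_{i=1}^g v_{i,j'_i} + (d-g)v_{g,j_g}$ be the representing divisor of $D$ on $G$ from Lemma \ref{lemma1discrete}. By Lemma \ref{linequimetricfingraph} we have $D \sim \Delta$ also as divisors on $\Gamma(G)$. Since $v_{g,j_g} = \langle 0\rangle_g = w_g$ and each $v_{i,j'_i}$ is a point of the cycle $C_i = \Gamma(G_i)$, the divisor $\Delta$ has the shape required in Lemma \ref{lemma1}; by the uniqueness in that lemma, $\Delta$ is exactly the representing divisor of $D$ on $\Gamma(G)$, so we may write $\Delta = \sum_{i=1}^g \langle \xi_i\rangle_i + (d-g)w_g$ for any choice of reals $\xi_i$ with $\langle\xi_i\rangle_i = v_{i,j'_i}$. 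Replacing $\rank_G(D)$ by $\rank_{\Gamma(G)}(D)$ via Proposition \ref{rankfinmetricgraph}, Theorem \ref{theorem1} now says that $\rank(D) \geq r$ if and only if there is an $\underline{m}$-displacement tableau $t$ on $[(g-d+r)\times(r+1)]$ with $\xi_{t(x,y)} \equiv x - y \pmod{m_{t(x,y)}}$ for all $(x,y)$.

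It then remains to match this congruence condition with the vertex condition $v_{t(x,y),j'_{t(x,y)}} = \langle x - y\rangle_{t(x,y)}$. In the forward direction, given such a tableau and expression, for a cell with $t(x,y)=i$ the relation $\xi_i \equiv x-y \pmod{m_i}$ forces $\xi_i \in \mathbb{Z}$ (as $x-y, m_i \in \mathbb{Z}$) and hence $\langle\xi_i\rangle_i = \langle x-y\rangle_i$ by the integer version of the relation $\langle n_1\rangle_i = \langle n_2\rangle_i \iff n_1 \equiv n_2 \pmod{m_i}$ (valid for the discrete $\langle\,\rangle_i$ by the compatibility noted after Definition \ref{notationpointdiscretechaincycles}), so $v_{i,j'_i} = \langle x-y\rangle_i$. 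Conversely, given a tableau $t$ with $v_{t(x,y),j'_{t(x,y)}} = \langle x-y\rangle_{t(x,y)}$ on all cells, I would build a representing expression making the congruences hold: for each $i \in \im t$ pick a cell $(x,y)$ with $t(x,y)=i$ and put $\xi_i = x-y$, using property (2) of $t$ to see all cells over $i$ share this value mod $m_i$ and the hypothesis to see $\langle\xi_i\rangle_i = v_{i,j'_i}$; for $i \notin \im t$ pick $\xi_i$ arbitrary with $\langle\xi_i\rangle_i = v_{i,j'_i}$ (e.g. $\xi_i = (j'_i-j_i)/(j_i-1)$). Then $\sum\langle\xi_i\rangle_i + (d-g)w_g = \Delta$ and the congruences hold by construction, so Theorem \ref{theorem1} gives $\rank(D)\geq r$.

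The only real content beyond bookkeeping is this last equivalence: the parameter $\xi_i$ attached to a vertex $v_{i,j'_i}$ need not be an integer in general (the period group of $\langle\,\rangle_i$ on the metric cycle $C_i$ is $\tfrac{k_i}{j_i-1}\mathbb{Z}$, which contains non-integer rationals when $j_i-1 \nmid k_i$), so one must exploit the freedom to choose $\xi_i$ within its coset and the fact that cells of a tableau impose honest integer shifts $x-y$. Once this point is handled as above, the rest is a direct transfer through Lemma \ref{linequimetricfingraph}, Lemma \ref{lemma1discrete}, Proposition \ref{rankfinmetricgraph} and Theorem \ref{theorem1}.
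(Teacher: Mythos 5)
Your proof is correct and follows essentially the same route as the paper: reduce to the metric graph $\Gamma(G)$ via Proposition \ref{rankfinmetricgraph}, apply Theorem \ref{theorem1}, and identify the two representing divisors through Lemma \ref{linequimetricfingraph} and the uniqueness in Lemma \ref{lemma1}. The only difference is that you make explicit the translation between the congruence $\xi_{t(x,y)} \equiv x-y \pmod{m_{t(x,y)}}$ and the vertex condition $v_{t(x,y),j'_{t(x,y)}} = \langle x-y\rangle_{t(x,y)}$, a point the paper's proof passes over silently.
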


\begin{proof}
Consider the chain of cycles $\Gamma (G)$ associated to $G$.
We know from Proposition \ref{rankfinmetricgraph} that $\rank _G(D) \geq r$ is equivalent to $\rank _{\Gamma (G)}D) \geq r$.
From Theorem \ref{theorem1} we know this is equivalent to the existence of an $\underline{m}$-displacement tableau $t$ on $[(g-d+r) \times (r+1)]$ such that for the representing divisor $\sum _{i=1}^g <\xi _i>_i + (d-g)v_{g,j_g}$ of $D$ on $\Gamma (G)$ we have $<\xi _{t(x,y)} >_{t(x,y)} = <x-y>_{t(x,y)}$ for all $(x,y) \in [(g-d+r) \times (r+1)]$.
From Lemma \ref{linequimetricfingraph} we know that $\sum _{i=1}^g v_{i,j'_i} + (d-g)v_{g,j_g}$ is linearly equivalent to $D$ as a divisor on $\Gamma (G)$.
From the uniqueness in Lemma \ref{lemma1} we therefore obtain $<\xi _i>_i=v_{i,j'_i}$, so we obtain the proof of the theorem.
\end{proof}

\begin{proposition} \label{relcliffgondiscgraph}
For a discrete chain of cycles $G$ we have $\Cliff (G)=\gon (G)-2$.
\end{proposition}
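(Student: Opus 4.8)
The plan is to transfer the statement to the associated metric chain of cycles $\Gamma(G)$, for which $\Cliff(\Gamma(G)) = \gon(\Gamma(G)) - 2$ is known from \cite{ref12}, since $\Gamma(G)$ is a chain of cycles in the sense of Definition \ref{def1} (each $G_i$ has $v_{i,1} \neq v_{i,j_i}$). The key bridging fact I would establish is: for every integer $r \geq 1$ and every integer $d$, there is a divisor of degree $d$ and rank $\geq r$ on $G$ if and only if there is such a divisor on $\Gamma(G)$. Here I use throughout that $G$ and $\Gamma(G)$ have the same genus $g$ and the same profile sequence $\underline{m}$ (noted after Definition \ref{notationpointdiscretechaincycles}).

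For the bridging fact, one implication is immediate: every divisor $D$ on $G$ is a divisor on $\Gamma(G)$ of the same degree, and $\rank_G(D) = \rank_{\Gamma(G)}(D)$ by Proposition \ref{rankfinmetricgraph}. For the converse, suppose $\Gamma(G)$ carries a divisor of degree $d$ and rank $\geq r$. By Theorem \ref{theorem1} there exists an $\underline{m}$-displacement tableau $t$ on $[(g-d+r)\times(r+1)]$. I then construct a divisor on $G$: for $i \in \im(t)$ choose $j'_i$ with $v_{i,j'_i} = <x-y>_i$ whenever $t(x,y) = i$, which is well defined because condition (2) of a displacement tableau together with Definition \ref{notationpointdiscretechaincycles} guarantees that $<x-y>_i$ depends only on $x-y \bmod m_i$; for $i \notin \im(t)$ choose $j'_i$ arbitrarily, e.g. $j'_i = j_i$. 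The divisor $D' = \sum_{i=1}^g v_{i,j'_i} + (d-g)v_{g,j_g}$ has degree $d$, is its own representing divisor by Lemma \ref{lemma1discrete}, and satisfies the hypothesis of Theorem \ref{theorem1disc}, hence $\rank_G(D') \geq r$. This proves the bridging fact.

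From the bridging fact, $\gon(G) = \gon(\Gamma(G))$ by Definition \ref{def7}. Likewise $\Cliff(G) = \Cliff(\Gamma(G))$: by Definition \ref{CliffIndMetricGraph} the Clifford index equals $\min\{d - 2r : r \geq 1,\ g-d+r-1 \geq 1,\ \text{a divisor of degree } d \text{ and rank exactly } r \text{ exists}\}$, and allowing "rank $\geq r$" in place of "rank exactly $r$" leaves this minimum unchanged, since passing to a divisor of higher rank only decreases $d-2r$ and preserves the inequality $g-d+r-1 \geq 1$. Thus the minimum depends only on which pairs $(d,r)$ with $r \geq 1$ are realized, which the bridging fact shows is the same datum for $G$ and $\Gamma(G)$. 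Combining, $\Cliff(G) = \Cliff(\Gamma(G)) = \gon(\Gamma(G)) - 2 = \gon(G) - 2$.

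The only mildly delicate point is the construction of $D'$ in the converse implication: one must check that the assignment $i \mapsto <x-y>_i$ is consistent over all cells of the tableau mapping to $i$, and that the free choices on the complement of $\im(t)$ do no harm. But this is exactly what the tableau axioms and Theorem \ref{theorem1disc} are built to accommodate, so no genuinely new argument is required.
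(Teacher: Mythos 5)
Your proof is correct. The paper's own proof is a two-line reduction: by Theorem \ref{theorem1disc} the set of pairs $(d,r)$ realized by divisors on $G$ is governed entirely by the existence of $\underline{m}$-displacement tableaux on $[(g-d+r)\times(r+1)]$, and the statement $\Cliff=\gon-2$ at the level of tableaux is exactly what is proved in \cite{ref12}. You take a mildly different route: instead of descending to the combinatorics, you transfer the whole problem to the metric graph $\Gamma(G)$ and invoke the metric-graph form of the \cite{ref12} result, the new content being your explicit bridging fact that $G$ and $\Gamma(G)$ realize the same degree--rank pairs. The two arguments rest on the same ingredients (the tableau criteria of Theorems \ref{theorem1} and \ref{theorem1disc}, Proposition \ref{rankfinmetricgraph}, and \cite{ref12}), but your version is more self-contained and isolates a reusable statement, and your checks are all sound: the well-definedness of $j'_i$ via condition (2) of Definition \ref{def4} together with the remark after Definition \ref{notationpointdiscretechaincycles}, the use of the uniqueness in Lemma \ref{lemma1discrete} to see that $D'$ is its own representing divisor, and the observation that replacing ``rank exactly $r$'' by ``rank at least $r$'' does not change the minimum defining the Clifford index. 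The only thing worth making explicit, which you use tacitly, is that the pairs $(d,r)$ relevant to the Clifford index satisfy $g-d+r\geq 2$, so the rectangles in question are nonempty and the tableau criterion applies nontrivially.
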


\begin{proof}
Because of Theorem \ref{theorem1disc} this statement can be reduced to a statement on $\underline{m}$-diplacement tableaux (see the argument in Section 2 of \cite{ref12}).
But this statement on $\underline{m}$-displacement tableaux is proved in \cite{ref12}.
\end{proof}

\begin{definition} \label{MartensSpecialdiscrete}
A discrete chain of cycles $G$ is called a general Martens-special discrete chain of cycles of type $k$ if there exist $3 \leq j_1 \leq j_2 \leq \cdots \leq j_k \leq g-2$ with $j_{i+1}-j_i \geq 2$ for $1 \leq i \leq k-1$ such that $m_i=2$ for $i \in \{2, \cdots ,g-1 \} \setminus \{ j_1, \cdots , j_k \}$ and $m_{j_i} > g$ for $1 \leq i \leq k$.
\end{definition}

\begin{proof} [Proof of Theorem C]
In the proof we use the notations from Definition \ref{MartensSpecialdiscrete}.
On $[(g-k-1) \times 2]$ we have the $\underline{m}$-displacement tableau defined by $t(x,y)=i_{x+y-1}$ with $\{ 1, \cdots , g \} \setminus \{j_1, \cdots , j_k \} = \{ i_1 < \cdots < i_{g-k} \}$.
Because of Theorem \ref{theorem1disc} this implies on $G$ there is a divisor of degree $k+2$ and rank at least 1.
This shows $\gon (G) \leq k+2$.

The proof of Proposition \ref{prop1} shows that there is no $\underline{m}$-displacement tableau $t$ on $[(g-l-1) \times 2]$ with $l<k$ if $m_i = 2$ if $i \in \{ 2, \cdots , g-1 \} \setminus \{j_1 , \cdots , j_k \}$ and each value $j_i$ with $1 \leq i \leq k$ can occur at most once in the tableau $t$.
This condition holds because $m_{j_i}>g$ for $1 \leq i \leq k$.
So the gonality of $G$ is exactly $k+2$.

From Proposition \ref{relcliffgondiscgraph} and the Riemann-Roch Theorem for finite graphs we find using the arguments from Lemma \ref{lemmaE1} that $g_1(G)=k+2$ implies $g_r(G)=g+r$ in case $r \geq g$, $g_r(G)=g-1+r$ in case $g-g_1(G)+1 \leq r \leq g-1$ and $g_1(G)+2r-2 \leq g_r(G) \leq g-1+r$ in case $1 \leq r \leq g-g_1(G)$.
The $\underline{m}$-displacement tableaux used in the proof of Theorem B imply $g_r(G)=g_1(G)+2r-2$ in case $1 \leq r \leq g-g_1(G)$.
\end{proof}

\begin{remark}
In the case of a finite graph $G$ the invariants $\dim (W^r_d(G))$ have no meaning but the invariants $w^r_d (G)$ can be defined as in the metric case.
It is not obvious what is the relation between $w^r_d(G)$ and $w^r_d(\Gamma (G))$.
Also the proof of H. Martens' Theorem for metric graphs in \cite{ref1} can not be applied to the discrete case.
Indeed in \cite {ref1} it is crucial that there exist points $<\xi >_i$ with $\xi \in \mathbb{R}$ generally chosen, which does not exist in the discrete case.
For the same reason it is not clear whether general Martens-special discrete chains of cycles are divisorial complete.
\end{remark}

\begin{bibsection}
\begin{biblist}

\bib{ref9}{article}{
	author={I. Aidun},
	author={F. Dean},
	author={R. Morrison},
	author={T. Yu},
	author={J. Yuan},
	title={Gonality sequences of graphs},
	journal={SIAM Journal on discrete mathematics},
	volume={35},
	year={2021},
	pages={814-839},
}
\bib{ref4}{book}{
	author={E.Arbarello},
	author={M. Cornalba},
	author={P.A. Griffiths},
	author={J. Harris},
	title={Geometry of algebraic curves Volume I},
	series={Grundlehren der mathematischen Wissenschaften},
	volume={267},
	year={1985},
	publisher={Springer-Verlag},
}
\bib{ref16}{article}{
	author={M. Baker},
	author={S. Norine},
	title={Riemann-Roch and Abel-Jacobi theory on a  finite graph},
	journal={Advances in Math.},
	volume={215},
	year={2007},
	pages={766-788},
}
\bib{ref7}{article}{
	author={F. Cools},
	author={M. Panizzut},
	title={The gonality sequence of complete graphs},
	journal={Electr. J. Comb.},
	volume={24 (4)},
	year={2017},
	pages={P4.1},
}
\bib{ref8}{article}{
	author={F. Cools},
	author={M. D'Adderio},
	author={D. Jensen},
	author={M. Panizzut},
	title={Brill-Noether theory of curves on $\mathbb{P}^1 \times \mathbb{P}^1$},
	journal={Algebraic Combinatorics},
	volume={2},
	year={2021},
	pages={323-341},
}
\bib{b}{article}{
	author={M. Coppens},
	title={Some remarks on the schems $W^r_d$},
	journal={Annali di Matematica pura ed applicata},
	volume={157},
	year={1990},
	pages={183-197},
}
\bib{ref1}{article}{
	author={M. Coppens},
	title={A study of H. Martens' Theorem on chain of cycles},
	journal={preprint},
	year={2025},
}
\bib{ref12}{article}{
	author={M. Coppens},
	title={The relation between the gonality and the Clifford index of a chain of cycles},
	journal={Archiv der Math.},
	volume={to appear},
}
\bib{ref18}{article}{
	author={M. Coppens},
	author={G. Martens},
	title={Divisorial complete curves},
	journal={Archiv der Mathematik},
	volume={86},
	year={2006},
	pages={409-418},
}

\bib{ref10}{article}{
	author={A. Fessler},
	author={D. Jensen},
	author={E. Kelsey},
	author={N. Owen},
	title={On the semigroup of graph gonality sequences},
	journal={Australasian Journal of Combinatorics},
	volume={88},
	year={2024},
	pages={343-361},
}
\bib{ref17}{article}{
	author={J. Hladky},
	author={D. Kr$\acute{\text{a}}$l},
	author={S. Norine},
	title={Rank of divisors on tropical curves},
	journal={Journal of Combinatorial Theory, Series A},
	volume={120},
	year={2013},
	pages={1521-1538},
}
\bib{ref14}{article}{
	author={R. Hartshorne},
	title={Generalized divisors on Gorenstein curves and a theorem of Noether},
	journal={J. Math. Kyoto Univ.},
	volume={26},
	year={1986},
	pages={375-386},
}
\bib{ref13}{article}{
	author={D. Jensen},
	author={K.J. Sawyer},
	title={Scrollar invariants of tropical curves},
	year={2020},
	journal={arXiv},
	pages={2001.02710},
}
\bib{ref6}{article}{
	author={H. Lange},
	author={G. Martens},
	title={On the gonality sequence of an algebraic curve},
	journal={Manuscripta Math.},
	volume={137},
	year={2012},
	pages={457-473},
}
\bib{ref5}{article}{
	author={H. Lange},
	author={P.E. Newstead},
	title={Clifford indices for vector bundles on curves \emph{in Affine flag manifolds and principal bundles}},
	journal={Trends in Mathematics, Springer-Verlag},
	year={2010},
	pages={165-202},
}
\bib{ref2}{article}{
	author={C. Lim},
	author={S. Payne},
	author={N. Potashnik},
	title={A note on Brill-Noether Theory and rank-determining sets for metric graphs},
	journal={International Mathematics Research Notes},
	volume={23},
	year={2012},
	pages={5484-5504},
}
\bib{ref15}{article}{
	author={Y. Luo},
	title={Rank determining sets for metric graphs},
	journal={J. Combin. Theory Ser. A},
	volume={118},
	year={2011},
	pages={1775-1793},
}

\bib{a}{article}{
	author={G. Martens},
	title={On dimension theorems of the varieties of special divisors on a curve},
	journal={Math. Ann.},
	volume={267},
	year={1984},
	pages={279-288},
}

\bib{ref3}{article}{
	author={H. Martens},
	title={On the varieties of special divisors on a curve},
	journal={J. reine angew. Math.},
	volume={227},
	year={1967},
	pages={111-120},
}
\bib{ref11}{article}{
	author={N. Pflueger},
	title={Special divisors on marked chains of cycles},
	journal={J. Combin. Theory Ser. A},
	volume={150},
	year={2017},
	pages={182-207},
}

\end{biblist}
\end{bibsection}

\end{document}